\newtheorem{teo}[equation]{Theorem}
\newtheorem{defin}[equation]{Definition}
\newtheorem{remark}[equation]{Remark}
\newtheorem{prop}[equation]{Proposition}
\newtheorem{cor}[equation]{Corollary}
\newtheorem{lemma}[equation]{Lemma}
\newtheorem{Example}[equation]{Example}
\newtheoremstyle{named}{}{}{\itshape}{}{\bfseries}{.}{.5em}{\thmnote{#3}#1}
\theoremstyle{named}
\newcommand{\ga}{\gamma}
\newcommand{\dga}{\dot{\gamma}}
\newcommand{\meno}{^{-1}}
\newcommand{\La}{\Lambda}
\newcommand{\proba}{\mathscr{P}}
\newcommand{\pb}[1]{\proba(#1)}
\newcommand{\misu}{\mathscr{M}}
\newcommand{\XX}{\mathfrak{X}}
\newcommand{\psiM}{\psi^\proba}
\newcommand{\PsiM}{\Psi^\proba}
\newcommand{\liu}{\mathfrak{u}}
\newcommand{\liek}{\mathfrak{k}}
\newcommand{\liel}{\mathfrak{l}}
\newcommand{\lieg}{\mathfrak{g}}
\newcommand{\liep}{\mathfrak{p}}
\newcommand{\lieq}{\mathfrak{q}}
\newcommand{\lia}{\mathfrak{a}}
\newcommand{\grad}{\operatorname{grad}}
\newcommand{\alfa}{\alpha}
\newcommand{\alf}{\alpha}
\newcommand{\vacuo}{\emptyset}
\newcommand{\im}{\operatorname{\pi_\liep}}
 \newcommand{\pf}{{}_*}
\newcommand{\est}{\Lambda}
\newcommand{\la}{\lambda}
\newcommand{\enf}{\emph}
\newcommand{\desudt}[1] []      {\dfrac {\mathrm {d} #1 }{\mathrm {dt}}}
\newcommand{\desudtzero}        {\desudt \bigg \vert _{t=0} }
\newcommand{\deze}        {\desudt \bigg \vert _{t=0} }
\newcommand{\restr}[1]          {\vert_{#1}}
\newcommand{\Ad}{\operatorname{Ad}}
\newcommand{\sx}{\langle}
\newcommand{\xs}{\rangle}
\newcommand{\scalo}{\sx \cdot , \cdot \xs}
\newcommand{\cds}{\cdots}
\newcommand{\cd}{\cdot}
\renewcommand{\setminus}{-}
\newcommand{\cinf}{C^\infty}
\newcommand{\ra}{\rightarrow}
\newcommand{\lra}{\longrightarrow}
\newcommand{\C}{\mathbb{C}}
\newcommand{\R}{\mathbb{R}}
\renewcommand{\phi}{\varphi}
\renewcommand{\bigl}{\left}
\renewcommand{\biggl}{\left}
\renewcommand{\Bigl}{\left}
\renewcommand{\bigr}{\right}
\renewcommand{\biggr}{\right}
\renewcommand{\Bigr}{\right}
\newcommand{\fun}{\mathfrak{F}}
\newcommand{\Crit}{\operatorname{Crit}}
\newcommand{\Id}{\operatorname{id}}
\newcommand{\lieh}{\mathfrak{h}}
\newcommand{\e}{\operatorname{e}}
\newcommand{\spaz}{\mathscr{M}}
\newcommand{\mume}{\fun\meno(0)}
\newcommand{\unfi}{u_\infty}
\newcommand{\tits}{\partial_\infty X}
\newcommand{\limes}{\alfa}%\phi_{-\infty}}
\newcommand{\x}{{v}}
\newcommand{\weak}{\rightharpoonup}
\newcommand{\meo}{\end{document}}
\newcommand{\mup}{\mu_\liep}
\newcommand{\mua}{\mu_\lia}
\newcommand{\mupb}{\mu_\liep^\beta}
\begin{document}
\title{Stability with respect to actions of real reductive Lie groups}
\author{Leonardo Biliotti}
\address{(Leonardo Biliotti) Dipartimento di Matematica e Informatica \\
          Universit\`a di Parma (Italy)}
\email{leonardo.biliotti@unipr.it}

\author{Michela Zedda}
\address{(Michela Zedda) Dipartimento di Matematica e Fisica ``Ennio De Giorgi'' \\
          Universit\`a del Salento (Italy)}
\email{michela.zedda@gmail.com}

\begin{abstract}
We give a systematic treatment of the stability theory for action of a real reductive Lie group $G$ on a topological space. More precisely, we introduce an abstract setting for actions of non-compact real reductive Lie groups on topological spaces that admit functions
similar to the  Kempf-Ness function. The point of this construction
is that one can characterize stability, semi-stability and polystability of a point  by numerical criteria, that is in terms of a function called maximal weight. We apply this setting to the actions of a real non-compact reductive Lie group $G$ on a real compact submanifold $M$ of a K\"ahler manifold $Z$ and to the action of $G$ on measures of $M$.
\end{abstract}
 \keywords{Gradient maps; geometric invariant theory; stability, polystability, semi-stability}

% \thanks{ The first author was supported by the Project MIUR
% ``Geometric Properties of Real and Complex Manifolds'' and by GNSAGA
% of INdAM.  The second author was supported by the PRIN 2007 MIUR
% ``Moduli, strutture geometriche e loro applicazioni'' and by GNSAGA
% of INdAM.  } %
%
 \subjclass[2010]{53D20; 14L24}
\thanks{The authors were partially supported by FIRB 2012 ``Geometria
  differenziale e teoria geometrica delle funzioni''. Further, the authors were also supported by GNSAGA of
  INdAM and by MIUR PRIN  2010-2011
   ``Variet\`a reali e complesse: geometria, topologia e analisi armonica''. }

\maketitle

\tableofcontents
\section{Introduction}
\label{sec:introduction}
Stability theory in K\"ahler geometry has been intensively studied by many authors and from several points of view, see e.g. \cite{georgula,heinzner-GIT-stein,heinzner-huckleberry-Inventiones,heinzner-huckleberry-loose,heinzner-loose,kempf-ness,kirwan,mumford-GIT,sjamaar-Annals,schwartz}.
This paper is inspired by the works of I. Mundet i Riera \cite{mundet-Trans} and A. Teleman \cite{teleman-symplectic-stability} where a systematical presentation of the stability theory in the non-algebraic K\"ahlerian geometry of complex reductive Lie groups is given, and by the recent paper \cite{bgs} where the first author jointly with A. Ghigi
develops a geometrical invariant theory on topological spaces, without assuming the existence of a symplectic structure. In particular, they apply the main results to the action of $U^\C$ on measures on a compact K\"ahler manifold $Z$, where $U$ is a compact connected Lie group acting in Hamiltonian fashion on $Z$. This was also motivated by an application to upper bounds for the first eigenvalue of the Laplacian on functions \cite{arezzo-ghigi-loi,biliotti-ghigi-AIF,biliotti-ghigi-American,bourguignon-li-yau,hersch}.

%In \cite{mundet-Trans,teleman-symplectic-stability} the authors give a systematical presentation of the stability theory in the non-algebraic K\"ahlerian geometry of complex reductive Lie groups.
%More recently, in \cite{bgs} the authors develop a geometrical invariant theory on topological spaces, and so with no symplectic structure, and they apply the main results to the action of $U^\C$ on measures on a compact K\"ahler manifold $Z$, where $U$ is a compact connected Lie group acting
%in Hamiltonian fashion on $Z$. This was also motivated by an application to upper bounds for the first
%eigenvalue of the Laplacian on functions \cite{arezzo-ghigi-loi,biliotti-ghigi-AIF,biliotti-ghigi-American,bourguignon-li-yau,hersch}.

In this paper we identify an  abstract setting to develop the  geometrical invariant theory for actions of real reductive Lie groups.
More precisely, given a Hausdorff topological space $\spaz$ with a continuous action of a non-compact real reductive  Lie group $G=K\exp (\liep)$ and a set of functions formally similar to the classical Kempf-Ness
function  we define an analogue of the gradient map $\fun: \spaz \lra \liep$ and the usual concepts of stability.

The gradient map has been intensively studied in \cite{heinz-stoezel,heinzner-schwarz-Cartan,heinzner-schwarz-stoetzel,heinzner-schutzdeller} and many other papers. The main idea is to investigate
a class of actions of real reductive Lie groups on complex spaces and on real submanifolds using momentum map techniques. This means that we consider a K\"ahler manifold $(Z,\omega)$  acted on by a complex reductive
Lie group $U^\C$ of holomorphic maps. The K\"ahler form $\omega$ is $U$-invariant, where $U$ is a compact form of $U^\C$, and there exists a momentum map $\mu:Z \lra \liu^*$. We recall that a momentum map $\mu$ is $U$-equivariant and for any $\xi\in  \liu$, the gradient of the
function $\mu^\xi (x)=\mu(x)(\xi)$ is given by $J( \xi_Z )$, where $\xi_Z (p)=\frac{d}{dt}\vert_{t=0} \exp(t\xi)p$ is the vector field corresponding to $\xi \in \liu$ and $J$ is the complex structure of $Z$ (see \cite{heinzner-huckleberry-MSRI,mcduff-salamon-symplectic} for more details about momentum map).
Since $U$ is compact we may identify $\liu \cong \liu^*$ by means of an $\mathrm{Ad}(U)$-invariant scalar product on $\liu$. Hence we may think the momentum map as a $\liu$-valued map, i.e., $\mu:Z \lra \liu$.

Let $G \subset U^\C$ be compatible. Then $G$ is closed and the Cartan decomposition $U^\C =U\exp (i \liu)$ induces a Cartan decomposition $G=K\exp (\liep)$, where $K=G\cap U$ and $\liep=\lieg\cap i \liu$.
Identifying $i\liu \cong \liu$ the inclusion $\liep \hookrightarrow i\liu$ induces a $K$-equivariant map $\mup:Z \lra \liep$. Finally if $M$ is a $G$-stable real submanifold of $Z$, we may restrict $\mup$ to $M$ and so considering $\mup:M \lra \liep$. The map $\mup:M \lra \liep$ is called \emph{gradient map}. In Section \ref{gradient-map} we extend the construction given in \cite{mundet-Crelles}  for the gradient map, defining a Kempf-Ness function of $(M,G,K)$.

The $G$-action on $M$ induces in a natural way a continuous action on measures of $M$, that we denote by $\mathcal P (M)$, with respect to the weak-$\ast$ topology. In Section \ref{sez-misure} we prove there exists a Kempf-Ness function for $(\mathcal P (M),G,K)$ and the map
\[
\fun (\nu)=\int_M \mup(x) \mathrm d \nu (x),
\]
is the analogue of the gradient map in this setting.
These are our  basic examples and the main motivations  to develop a geometrical invariant theory for actions of real reductive Lie groups.

Stability and semi-stability are checked using the position of the  $G$-orbit with respect to the vanishing locus of the gradient map.
The main point of our construction is that one can characterize stability, semi-stability and polystability of a point by numerical criteria, that is in terms of a function called maximal
weight, which is defined on the Tits boundary of the symmetric space of non-compact type $G/K$.  Roughly speaking we extend criteria for stability, semi-stability and polystability due to Teleman \cite{teleman-symplectic-stability}, Mundet I Riera \cite{mundet-Crelles, mundet-Trans}, Kapovich, Leeb and Milson \cite{kapovich-leeb-millson-convex-JDG}, Biliotti and Ghigi \cite{bgs} and probably many others, for a large class of  actions of complex reductive Lie groups,  to actions of non-compact real reductive Lie groups. Our  criterion for polystability is weaker than those proved by Mundet i Riera \cite{mundet-Trans} and by the first author and Ghigi in \cite{bgs} for complex reductive Lie gropus. However if $G=K^\C=K\exp(\mathbf{i} \liek)$ is complex reductive then condition $(P3)$ in Section \ref{sec:abstract-setting}, i.e.,   $\frac{\mathrm{d^2}}{\mathrm{dt}^2 }\bigg \vert_{t=0}\Psi(x,\exp(t\x)) =0$
if and only if $\exp(\R \x) \subset G_x$, does not imply $\exp (\C \x) \subset G_x$ as required by $(P3)$ in \cite[p. 6]{bgs}. % \sout{We point out that if $G=K^\C$ then both the Kempf-Ness functions associated to the classical momentum map and to the  $G$ action on measures, satisfy $(P3)$ in}
%\cite[p. 6]{bgs}.
 This condition is crucial in Mundet's proof \cite{mundet-Trans} and in the proof given in \cite{bgs} for polystability. Indeed, thanks to the $K$-equivariance of  $\fun$, if $\exp (\C \x) \subset G_x$, then  $\fun (x) \in \liek^{\x}=\{u\in \liek:\, [u,\x]=0\}$  and thus a sort of a reduction principle applied.

In the abstract setting introduced in this paper, the above condition is equivalent to the following: if $\exp(\R \x)\subset G_x$ then $\fun (x)\in \liep^{\x}=\{u\in \liep:\, [u,\x]=0\}$. This does not hold for a  general gradient map $\fun$ since it is only $K$-equivariant. On the other hand this condition holds for the gradient map \cite{heinzner-schwarz-stoetzel} and the gradient map defined by the Kempf-Ness function with respect to the $G$ action on measure (Proposition \ref{P5M}). The authors believe that the polystability criterion due to Mundet \cite{mundet-Trans} holds under the above condition. We leave this problem for future investigation.

What is satisfactory of  Theorem  \ref{polystable-condition} is that the reductivity of the stabilizer is obtained as a consequence of conditions  involving only the maximal weight and the set on which the maximal weight is zero.
We also prove a version of the Hilbert-Munford criterion and the openness of the set of stable points.
Finally we completely characterize stable, semi-stable and polystable measures on  real projective spaces.

The paper is organized as follows.

In Section \ref{titssection} we review basic facts on real reductive Lie groups and Tits boundary of a Hadamard manifold.

In Section \ref{sec:abstract-setting} we define the abstract setting and the general gradient map with respect to a Kempf-Ness function of $(\spaz, G,K)$.

In Section \ref{maximal-weight-section} we define the maximal weight on the Tits boundary of $X=G/K$. Since the Kempf-Ness function is $K$-invariant, for any $x\in \spaz$  the Kempf-Ness function descends to a function $\psi_x :X \lra \R$ which is geodesically convex. If $\psi_x$ is Lipchitz then Lemma \ref{convex-function} defines what is called  maximal weight on the Tits boundary of $X$. We also point out that the maximal weight is $G$-equivariant.

In Section \ref{sec:stab-with-resp} we define  stable, semi-stable and polystable points giving a numerical criterion for an element $x\in \spaz$
to be stable (Theorem \ref{stabile}). We give a version of the Hilbert-Munford criterion (Corollary \ref{Hilbert-Mumford}) and we prove the openness of the set of stable points (Corollary \ref{stable-open-abstract-setting}).

In Section \ref{sec:polyst-semi-stab} we give numerical criteria for semi-stability (Theorem \ref{semi-stable-abstract}) and polystability (Theorem \ref{polystable-condition}) and a Hilbert-Munford criterion for semi-stable points (Corollary \ref{Hilbert-Mumfords}).

In Section \ref{gradient-map} we discuss the basic example, i.e., the classical gradient map, and in Section \ref{sez-misure} we apply our setting on the $G$ action on measures.
Using the Morse-Bott theory of the gradient map on $M$ we compute rather explicitly the maximal weight. Moreover, assuming the convexity theorem holds for abelian subgroup $A=\exp(\lia)$ where $\lia \subset \liep$, see \cite{bghc,heinzner-schutzdeller} for more details, if $0\in E(\mup)$, where $E(\mup)$ is the convex hull of the image of the gradient map $\mup$, then any smooth measures is semi-stable (Proposition \ref{measure-semistable}). The condition $0\in E(\mup)$ is always satisfied up to shifting the gradient map with respect to some $\mathrm{Ad}(K)$-fixed point of $\liep$. We also prove that the set of semi-stable points is dense. If $0$ lies in the interior of $E(\mup)$ then any smooth measures is stable and the set of stable points is open and dense. This condition is always satisfied if $M$ is an adjoint orbit of $K$ in $\liep$ and $K$ acts irreducibly on $\liep$.

In Section \ref{misure-proiettivo} we completely describe stable, semi-stable and polystable measures on real projective spaces.

\noindent
{\bfseries \noindent{Acknowledgements.}}  The authors are very grateful to Alessandro Ghigi for all the intersting discussions and to Alberto Raffero for his comments. The second author wishes also to thank Parma's Mathematics Department for the wonderful hospitality during her stay as post-doc student.

%%%%%%%%%%%%%%%%%%%%%%%%%%%%%%%%%%%%%%%%%%%%%%%%%%%%%%%%%%%%%%%%%%%%%%
\section{Tits boundary of $G/K$}\label{titssection}
%Throughout this paper we denote by
Let $G$ be a non-compact real reductive Lie group and denote by $\lieg$ its Lie algebra. Recall that such $G$ has a finite number of connected components and its algebra splits as $\lieg=[\lieg ,\lieg]\oplus \mathfrak z (\lieg)$, where $[\lieg ,\lieg]$ is semisimple and $\mathfrak z (\lieg)$ is the center of $\lieg$.
Further, maximal compact subgroups of $G$ always exist and meet every connected components, and any two of them are conjugate under an element of the identity component $G^o$ of $G$.
 Assume that there exists a Cartan involution $\theta:G \lra G$ with fixed points set $K$ and let us denote also by $\theta:\lieg \lra \lieg$ its differential. Then $\lieg=\liek\oplus \liep$ and the map $f: K \times \liep \ra G$, $f(g, \x ) = g  \exp \x $ is a
diffeomorphism. This means that $G=K\exp(\liep)$ and $G/K$ is simply connected. Since $\theta_{|_{\liek}}=\mathrm{Id}$ and $\theta_{|_{\liep}}=-\mathrm{Id}$, we have
$[\liek,\liek]\subset \liek$, $[\liek,\liep]\subset \liep$ and $[\liep,\liep]\subset \liek$. Therefore if $\lia \subset \liep$ is a Lie subalgebra, then it must be abelian. Moreover, two maximal abelian subalgebras contained in $\liep$ are conjugate with respect to the identity component $K^o$. We refer the reader to \cite{borel-ji-libro,helgason, knapp-beyond} for more details on real reductive Lie groups.
Set
 \begin{gather*}
    X:=G/K.
  \end{gather*}
  Observe that $G$ acts isometrically on $X$ from the left by:
  \begin{gather*}
    L_g : X \ra X , \qquad L_g(hK) :=ghK,\qquad g\in G.
  \end{gather*}
To simplify the notation, we will often write $gx$ instead of $L_g(x)$.
The choice of an $\mathrm{Ad}(K)$-invariant scalar product on $\liep$ induces a $G$-invariant Riemannian metric on $X$. It is well known that $X$ endowed with this metric is a symmetric space of non-compact type and thus a Hadamard manifold \cite{eberlein, helgason}. The Riemannian exponential map arises  by the exponential map of Lie groups. Hence a geodesic on $X$ is given by $g \exp (tv)K$, where  $g\in G$ and $v\in \liep$. In the sequel we denote by $\gamma^v (t)=\exp(tv) K$.

Since $X$ is a Hadamard manifold there is a natural notion of boundary at infinity $\partial_\infty X$ which can be described using geodesic.

Two unit speed geodesic rays $\ga, \ga' : (0,+\infty) \ra X$
  are equivalent, denoted by $\ga \sim \ga'$, if
  $\sup_{t\in (0,+\infty)} d(\ga(t), \ga'(t)) $ $ < +\infty$.  The \emph{Tits
    boundary} of $X$, denoted by $\tits$, is the set of equivalence
  classes of unit speed geodesic ray in $X$.

  Set $o:= K \in X$.  Mapping $v$ to the tangent
  vector $\dot{\ga}^v(0)$ yields an isomorphism $\liep \cong T_oX$.
  Since any geodesic ray in $X$ is equivalent to a unique ray starting
  from $o$, the map
  \begin{gather}
    \label{def-e}
    \e : S(\liep) \ra \tits, \ \e(v):= [\ga^v],
  \end{gather}
  where $S(\liep)$ is the unit sphere in $\liep$, is a bijection. The
  \emph{sphere topology} is the topology on $\tits$ such that $\e$ is
  a homeomorphism.  (For more details on the Tits boundary see for
  example \cite[\S I.2]{borel-ji-libro} and \cite{eberlein}.)

Since $G$ acts by isometries on $X$, if $\gamma$ is a unit speed geodesic in $X$, then for each $g\in G$ also $g\gamma$ is. Further, since $\gamma \sim \gamma'$ implies $g\gamma \sim g\gamma'$, we get a $G$-action on the Tits boundary $\tits$ by:
\[
g\cdot [\gamma] =[g\gamma],
\]
which also induces by \eqref{def-e} a $G$-action on $S(\liep)$ given by: %n Identifying $S(\liep)$ with $\tits$, we get a $G$ action on $S(\liep)$:
\[
g\cdot v=\e^{-1}(g\cdot \e (v)).
\]\label{ss1}
Observe that this last one is discontinuous with respect to the sphere topology on $S(\liep)$.
\begin{defin}
Let $H\subset G$ be a closed subgroup. Set
  $L:=H\cap K$ and $\tilde \liep:= \lieh \cap \liep$. According to \cite{heinzner-schwarz-stoetzel,
    heinzner-stoetzel-global}, we say that $H$ is
  \enf{compatible} if $H=L \exp (\tilde \liep )$.
\end{defin}
If $H$ is a compatible subgroup of $G$, then it follows that it is a real reductive subgroup of $G$, the Cartan involution of $G$ induces a Cartan involution of $H$, $L$ is a maximal compact subgroup of $H$ and finally $\lieh = \liel \oplus \tilde \liep$. Note that $H$ has finitely many
connected components. Moreover, there are
  totally geodesic inclusions $X':= H/L \hookrightarrow X$ and
  $\partial_\infty X' \subset \tits$.
\section{Kempf-Ness functions}
\label{sec:abstract-setting}
  Let $\spaz$ be a Hausdorff topological space and let $G$ be a non-compact real reductive group which acts continuously on $\spaz$. Observe that with these assumptions we can write $G=K\exp (\liep)$, where $K$ is a maximal compact subgroup of $G$. Starting with these data we  consider a
  function $ \Psi : \spaz \times G \ra \R$, subject to five conditions.
  The first four are the following ones:
  \begin{enumerate}
  \item[($P1$)] For any $x\in \spaz$ the function $ \Psi(x,\cd )$
    is smooth on $G$.
  \item[($P2$)] The function $\Psi(x, \cd )$ is left--invariant
    with respect to $K$, i.e.: $\Psi(x,kg) = \Psi(x,g)$.
  \item[($P3$)] For any $x\in \spaz$, and any $\x \in \liep$ and
    $t\in \R${:}
    \begin{gather*}
      \frac{\mathrm{d^2}}{\mathrm{dt}^2 } \Psi(x,\exp(t\x)) \geq 0.
    \end{gather*}
    Moreover:
    \begin{gather*}
      \frac{\mathrm{d^2}}{\mathrm{dt}^2 }\bigg \vert_{t=0}
      \Psi(x,\exp(t\x)) = 0
    \end{gather*}
    if and only if $\exp(\R \x) \subset G_x$.
    % $\exp(it\x ) \cd x = x$ for any $t\in \R$.
  \item[($P4$)] For any $x\in \spaz$, and any $g, h\in G$:
    \begin{gather*}
      \Psi(x,g) + \Psi({gx}, h) = \Psi(x,hg).
    \end{gather*}
    This equation is called the \emph{cocycle condition}.
  \end{enumerate}
  As in the previous section, let $X=G/K.$
If $\Psi$ is a
  function satisfying ($P1$)--($P4$), then by ($P2$) the
  function $g\mapsto \Psi(x, g\meno)$ descends to a function on $X$:
  \begin{gather}
    \label{defpsi}
    \psi_x: X \ra\R, \quad \psi_x(gK) := \Psi(x, g\meno),
  \end{gather}
  and the cocycle condition $(P4)$ can be rewritten in terms of $\psi_x$ as:
  \begin{gather}
    \tag{$P4'$}
    \label{cociclo-psi}
    \psi_x(ghK) = \psi_x (gK) + \psi_{g\meno x} (hK),
  \end{gather}
which is also equivalent to the following identity between
  two functions and a constant:
  \begin{gather}
    \label{cociclo-3}
    L_g^* \psi_x = \psi_{g\meno x} + \psi_x(gK),
  \end{gather}
\label{say-def-la}
where $L_g$ denotes the action of $G$ on $X$ (see previous section).
  % \end{gather}
%  We can now state our fifth assumption:

In order to state our fifth condition, let $\scalo : \liep^*\times \liep \ra \R$ be the duality pairing.
For $x\in \spaz$ define $\fun(x) \in \liep^*$
by requiring that:
\begin{equation}\label{momento-astratto}
  \sx \fun (x), \x\xs = -  (d \psi_x )_{o}(\dga^\x(0))  = \desudtzero \psi_x( \exp(-t\x) K)  = \desudtzero \Psi(x,
    \exp(t\x)) .\nonumber
\end{equation}
%  \[
%    \label{momento-astratto}
%    \begin{split}
%    \sx \fun (x), \x\xs &= -  (d \psi_x )_{o}(\dga^\x(0)) \\ &=
%    \desudtzero \psi_x( \exp(-t\x) K) \\ & = \desudtzero \Psi(x,
%    \exp(t\x)) .
%  \end{split}
%\]
The following is the fifth and last condition
  imposed on the function $\Psi$:
  \begin{enumerate}
    \item[$(P5)$]
    % $\spaz$ is compact.
    The map $\fun : \spaz \ra \liep^*$ is continuous.
  \end{enumerate}

  We call $\fun$ the \emph{gradient map} of $(\spaz, G, K, \Psi).$ As immediate consequence of the definition of $\fun$ we have the following result.
   \begin{prop}\label{equivarianza}
  The map $\fun : \spaz \ra \liep^*$ is $K$-equivariant.
\end{prop}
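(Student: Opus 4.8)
The claim is that $\fun(kx) = \Ad^*(k)\fun(x)$ for every $k \in K$ and every $x \in \spaz$, where the coadjoint action on $\liep^*$ is the one determined by $\sx \Ad^*(k)\beta, \x\xs = \sx \beta, \Ad(k\meno)\x\xs$. This action is well defined because $\Ad(k)$ preserves $\liep$: for $k \in K$ one has $\theta(k) = k$, hence $\theta \circ \Ad(k) = \Ad(k) \circ \theta$, so $\Ad(k)$ respects the eigenspace decomposition $\lieg = \liek \oplus \liep$ of $\theta$. The plan is to evaluate the defining pairing $\sx \fun(kx), \x\xs$ directly from the definition of $\fun$ and to reduce it, using only $(P2)$ and $(P4)$, to $\sx \fun(x), \Ad(k\meno)\x\xs$.

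First I would apply the cocycle condition $(P4)$ with $g = k$ and $h = \exp(t\x)$, which gives
\[
\Psi(kx, \exp(t\x)) = \Psi(x, \exp(t\x)\,k) - \Psi(x, k).
\]
The second term is independent of $t$ and so will drop out upon differentiation. For the first term I would use the conjugation identity $\exp(t\x)\,k = k \exp(t\,\Ad(k\meno)\x)$ together with the left $K$-invariance $(P2)$ to erase the leading factor $k$, obtaining $\Psi(x, \exp(t\x)\,k) = \Psi(x, \exp(t\,\Ad(k\meno)\x))$. Here it is essential that $\Ad(k\meno)\x \in \liep$, so that the right-hand side is again the evaluation of $\Psi(x,\cd)$ along the one-parameter subgroup generated by an element of $\liep$, exactly the form appearing in the definition of $\fun$.

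Differentiating at $t = 0$ and discarding the constant term then yields
\[
\sx \fun(kx), \x\xs = \desudtzero \Psi(x, \exp(t\,\Ad(k\meno)\x)) = \sx \fun(x), \Ad(k\meno)\x\xs,
\]
and the right-hand side equals $\sx \Ad^*(k)\fun(x), \x\xs$ by the definition of the coadjoint action. Since $\x \in \liep$ is arbitrary, this is precisely the asserted $K$-equivariance. The argument is essentially formal, and I expect no real obstacle: the only points requiring care are the bookkeeping of the conjugation identity in combination with $(P2)$, and the observation that $\Ad(k)$ preserves $\liep$, which keeps every intermediate quantity inside the domain where $(P2)$ and the definition of $\fun$ apply.
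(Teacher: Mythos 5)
Your argument is correct and is essentially the paper's own proof: both apply the cocycle condition $(P4)$ to write $\Psi(kx,\exp(t\x))$ as $\Psi(x,\exp(t\x)k)$ up to a $t$-independent constant, then use $(P2)$ together with the conjugation identity to reduce to $\Psi(x,\exp(t\Ad(k\meno)\x))$ and differentiate at $t=0$. The extra remark that $\Ad(k)$ preserves $\liep$ is a correct and harmless addition.
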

\begin{proof}
  It is an easy application of the cocycle condition and the
  left-invariance with respect to $K$ of $\Psi(x,\cdot)$. Indeed,
  \begin{gather*}
    \begin{split}
      \sx \fun (kx),\x \xs &=\deze \Psi (x, \exp(t\x)k)=\deze \Psi
      (x,k^{-1} \exp(t\x) k)\\
      &=\deze \Psi\left (x,\exp(t\mathrm{Ad}(k^{-1})(\x))\right
      )=\mathrm{Ad}^* (k) ( \fun (x))(\x) .
    \end{split}
  \end{gather*}
\end{proof}

  The following definition summarizes the above discussion.
\begin{defin}
\label{def-kn}
Let $G$ be a non-compact real reductive Lie group, $K$ a maximal compact subgroup of $G$ and $\spaz$ a
  topological space with a continuous $G$--action. A \emph{Kempf-Ness
    function} for $(\spaz, G,K)$ is a function
  \begin{gather*}
    \Psi : \spaz \times G \ra \R ,
  \end{gather*}
  that satisfies conditions ($P1$)--($P5$).
\end{defin}
\begin{remark}\label{stabilizzatore-lineare}
Taking $g=h=e$ in the cocycle condition ($P4$) we
have $\Psi(x, e) = 0$. Hence $\Psi(x,k)=0$ for every $k\in K$, since $\Psi(x,\cdot)$ is $K$-invariant on the second factor.
Moreover, for any $x\in \spaz$ and for any $g,h\in G_x$ we have:
  \begin{equation}
    \label{somma}
    \Psi(x,hg)=\Psi(x,g)+\Psi(x,h),
  \end{equation}
which implies that $\Psi(x,\cdot):G_x \lra \R$ is a homomorphism.
\end{remark}
\section{Maximal weights}\label{maximal-weight-section}
Let $X=G/K$ and let $u: X \ra \R$ be a smooth function. We say that $u$ is \emph{geodesically convex} on $X$ if  $u(\gamma(t))$ is a convex function for any geodesic $\gamma (t)$ in $X$.
The following lemma is proven in greater generality by Kapovich, Leeb
  and Millson in
  \cite[\S3.1]{kapovich-leeb-millson-convex-JDG} (see also \cite[\S 2.3]{bgs}).
\begin{lemma}\label{convex-function}
  Let $u: X \ra \R$ be a smooth geodesically convex  function on $X$. Assume that
  $u$ is globally Lipschitz continuous.  Then the function
  $\unfi : \tits \ra \R$ {given by:}
  \begin{gather}
    \label{eq:1}
    \unfi ([\ga]) : = \lim _{t \to +\infty } (u\circ\ga)'(t),
  \end{gather}
  is well--defined. Moreover $u$ is an exhaustion if and only if
  $\unfi > 0$ on $\tits$.
\end{lemma}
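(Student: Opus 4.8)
The plan is to prove the two assertions separately; I expect the only real difficulty to be the implication $\unfi>0\Rightarrow u$ is an exhaustion. For well-definedness, I would fix a unit speed geodesic ray $\ga$ and observe that $u\comp\ga$ is smooth and convex, so its derivative $(u\comp\ga)'$ is non-decreasing; since $u$ is $L$-Lipschitz and $\ga$ has unit speed, $|(u\comp\ga)'|\le L$, and therefore $\lim_{t\to+\infty}(u\comp\ga)'(t)$ exists and is finite. To see that the limit depends only on the class $[\ga]$, I would use the elementary fact that for a convex $f$ on $(0,+\infty)$ one has $\lim_{t\to+\infty}f'(t)=\lim_{t\to+\infty}f(t)/t$ (the slope $(f(t)-f(0))/t$ is non-decreasing with the same limit as $f'$). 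If $\ga\sim\ga'$ then $\sup_t d(\ga(t),\ga'(t))<+\infty$, so by the Lipschitz bound $|u(\ga(t))-u(\ga'(t))|/t\to 0$ and the two limits coincide. This makes $\unfi:\tits\to\R$ well-defined.

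For the equivalence I would first dispatch the easy direction: if $\unfi([\ga])\le 0$ for some class, represent it by the ray $\ga^v$ from $o$; then $(u\comp\ga^v)'$ is non-decreasing with limit $\le 0$, hence $\le 0$ throughout, so $u\comp\ga^v$ is non-increasing and bounded above by $u(o)$. Thus the sublevel set $\{u\le u(o)\}$ contains the unbounded ray $\{\ga^v(t):t\ge 0\}$, so it fails to be compact (in the complete manifold $X$ closed bounded sets are compact by Hopf--Rinow), and $u$ is not an exhaustion. Hence an exhaustion forces $\unfi>0$.

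For the converse, which I expect to be the main obstacle, I would set
\begin{gather*}
  g_t(v):=\frac{u(\exp(tv)K)-u(o)}{t},\qquad v\in S(\liep),\ t>0 .
\end{gather*}
Each $g_t$ is continuous in $v$; by convexity $g_t(v)$ is non-decreasing in $t$, and $\sup_{t>0}g_t(v)=\lim_{t\to+\infty}g_t(v)=\unfi(\e(v))$. Assuming $\unfi>0$ on $\tits$, the map $v\mapsto\unfi(\e(v))$ is a supremum of continuous functions, hence lower semicontinuous on the compact sphere $S(\liep)$, so it attains a positive minimum $\eps>0$. For each $v$ I would choose $t_v$ with $g_{t_v}(v)>\eps/2$; by continuity this persists on a neighbourhood of $v$, and compactness of $S(\liep)$ yields finitely many such neighbourhoods covering the sphere. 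Taking $T$ to be the largest corresponding $t_v$ and using monotonicity in $t$, I would obtain $g_T(v)>\eps/2$ for \emph{every} $v\in S(\liep)$, whence $u(\exp(tv)K)\ge u(o)+\tfrac{\eps}{2}t$ for all $t\ge T$ and all $v$. Writing an arbitrary point of $X$ in polar form $\exp(tv)K$ with $t=d(o,\cd)$, this shows $u\to+\infty$ as $d(o,\cd)\to+\infty$, so every sublevel set is bounded, hence compact, and $u$ is an exhaustion. The crux is exactly this passage from the pointwise bound $\unfi(\e(v))>0$ to a single $T$ valid uniformly in $v$, for which the compactness of $S(\liep)\cong\tits$ together with the monotonicity of $g_t$ in $t$ are precisely what is required.
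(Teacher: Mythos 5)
Your proof is correct. Note that the paper does not actually prove this lemma: it defers entirely to Kapovich--Leeb--Millson \cite[\S3.1]{kapovich-leeb-millson-convex-JDG} and to \cite[\S2.3]{bgs}, and your argument is in essence the standard one from those sources, specialized to the case at hand where $\tits$ is identified with the compact unit sphere $S(\liep)$. All the delicate points are handled properly: the independence of \eqref{eq:1} from the representative via the asymptotic-slope identity $\lim f'(t)=\lim f(t)/t$ together with the Lipschitz bound; and, in the harder implication, the passage from the pointwise positivity of $\unfi$ to a uniform linear lower bound $u(\exp(tv)K)\geq u(o)+\tfrac{\eps}{2}t$, which you obtain correctly by combining the lower semicontinuity of $v\mapsto \unfi(\e(v))$ (a supremum of the continuous functions $g_t$), the compactness of $S(\liep)$, and the monotonicity in $t$ of the difference quotients $g_t(v)$. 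The only cosmetic slip is writing $f(0)$ for a ray parametrized on $(0,+\infty)$; replacing the base point by $f(1)$ (or extending the ray to $t=0$, as one may in a Hadamard manifold) fixes this without affecting anything.
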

Recall that a continuous function $f: X \ra \R$
is an exhaustion if for any $c\in \R$ the set
  $f\meno ((-\infty, c])$ is compact, condition which is equivalent for $f$ to be bounded below and proper.

As in \cite{bgs}, the following result holds.
\begin{lemma}
  \label{convstab}
  The function $\psi_x$ is geodesically convex on $X$. More precisely,
  if $\x \in \liep$ and $\alf(t) = g \exp(t\x) K$ is a geodesic in
  $X$, then $\psi_x \circ \alfa$ is either strictly convex or
  affine. The latter case occurs if and only if
  $ g \exp (\R \x) g \meno \subset G_x$.  In the case $g=e$, the
  function $\psi_x \circ \alfa$ is linear if
  $ \exp (\R \x) \subset G_x$ and strictly convex otherwise.
\end{lemma}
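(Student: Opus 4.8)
The plan is to reduce an arbitrary geodesic to one issuing from the base point $o=K$ via the cocycle condition, and then to read off both convexity and the strict/affine dichotomy directly from condition $(P3)$.

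First I would carry out the reduction. Writing the geodesic as $\alfa(t)=g\exp(t\x)K=L_g(\ga^\x(t))$ and invoking the transformation rule \eqref{cociclo-3}, I get
\[
\psi_x(\alfa(t))=\psi_{g\meno x}(\ga^\x(t))+\psi_x(gK).
\]
The last summand is constant in $t$, so the convexity type of $\psi_x\comp\alfa$ is that of $\beta(t):=\psi_{g\meno x}(\ga^\x(t))$; thus it suffices to treat geodesics through $o$, at the price of replacing $x$ by $y:=g\meno x$. By \eqref{defpsi} one has $\beta(t)=\Psi(y,\exp(-t\x))$, and $(P3)$ applied with $-\x$ gives $\beta''(t)\ge 0$ for all $t$. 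This already establishes the geodesic convexity of $\psi_x$, since every geodesic of $X$ has the form $g\exp(t\x)K$.

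The core of the argument is to upgrade $\beta''\ge 0$ to the dichotomy ``identically zero or everywhere strictly positive''. The equality clause of $(P3)$ only controls the second derivative at $t=0$, so to reach a general parameter $t_0$ I would recenter the geodesic by the cocycle condition \eqref{cociclo-psi}. Writing $\exp((t_0+s)\x)=\exp(t_0\x)\exp(s\x)$ gives
\[
\beta(t_0+s)=\psi_y(\exp(t_0\x)K)+\psi_{\exp(-t_0\x)y}(\exp(s\x)K),
\]
so that, differentiating twice and using \eqref{defpsi},
\[
\beta''(t_0)=\frac{\mathrm{d}^2}{\mathrm{d}s^2}\Big|_{s=0}\Psi(\exp(-t_0\x)y,\exp(-s\x)).
\]
By the equality clause of $(P3)$ this vanishes if and only if $\exp(\R\x)\subset G_{\exp(-t_0\x)y}$. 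The key point is that this condition does not depend on $t_0$: since $\exp(r\x)$ commutes with $\exp(-t_0\x)$, one has $\exp(r\x)\exp(-t_0\x)y=\exp(-t_0\x)\exp(r\x)y$, and cancelling $\exp(-t_0\x)$ shows that $\exp(\R\x)\subset G_{\exp(-t_0\x)y}$ is equivalent to $\exp(\R\x)\subset G_y$. Hence $\beta''$ is either identically $0$ (when $\exp(\R\x)\subset G_y$), making $\beta$ and therefore $\psi_x\comp\alfa$ affine, or strictly positive everywhere, making $\psi_x\comp\alfa$ strictly convex.

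Finally I would translate the criterion back to $x$ and settle the linear case. Multiplying the relation $\exp(r\x)g\meno x=g\meno x$ on the left by $g$ shows that $\exp(\R\x)\subset G_{g\meno x}$ is equivalent to $g\exp(\R\x)g\meno\subset G_x$, which is the stated condition for the affine case. When $g=e$ we have $y=x$ and $\alfa(0)=o$, and by Remark \ref{stabilizzatore-lineare} $\psi_x(o)=\Psi(x,e)=0$; thus in the affine case $\psi_x\comp\ga^\x$ vanishes at $t=0$ and is therefore linear. The one step needing care is the recentering: one must verify that the twice-differentiated shifted cocycle isolates exactly the quantity governed by $(P3)$ at the shifted base point $\exp(-t_0\x)y$, and that the resulting stabilizer condition is genuinely independent of $t_0$. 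Granting this, the remaining assertions are direct consequences of the conditions already recorded.
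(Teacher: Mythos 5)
Your proof is correct, and it supplies exactly the argument that the paper omits (it defers to \cite{bgs}): reduce to a geodesic through $o$ via the cocycle identity \eqref{cociclo-3}, recenter at an arbitrary parameter $t_0$ via \eqref{cociclo-psi} so that the equality clause of $(P3)$ applies, and observe that the stabilizer condition is independent of $t_0$ because $\exp(r\x)$ commutes with $\exp(-t_0\x)$. This is the standard route and matches the cited proof in substance.
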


{Due to Lemma \ref{convstab},} in order to apply Lemma \ref{convex-function} to $\psi_x$, we need {only} to add this last assumption:

\begin{enumerate}
 \item[$(P6)$]
    % $\spaz$ is compact.
For any $x\in \spaz$, the function $\psi_x : X \ra \R$ is globally Lipschitz on $X$.
 \end{enumerate}

When property ($P6$) holds, for any $x\in \spaz$ the function $\la_x:=(\psi_x)_\infty$ given by:
  \begin{gather}    \la_x : \tits \ra \R % (-\infty, +\infty ]
    \qquad \la_x ([\ga]): = \lim_{t\to +\infty} \desudt
    \psi_x(\ga(t)),
  \end{gather}
  is well-defined and finite. We call $\la_x$ \emph{maximal weight}.
Moreover for any $x\in \spaz$, any $g\in G$ and any $p\in \tits$ we have (see \cite[Lemma 2.28]{bgs} for a proof):
\begin{gather}\label{maximal-weight-Ginvariant}
\lambda_{g^{-1}x}(p)=\lambda_x (g\cdot p).
\end{gather}
%A proof can be found in \cite[Lemma 2.28]{bgs}.
It is also well-defined and finite the function:
  \begin{gather}
    \label{def-la}
    \la : \spaz \times \tits \lra \R , \quad \la(x, p):= \la_x(p).
  \end{gather}
 Since we set the sphere topology on $\tits$, i.e., the topology on $\tits$ such that  $e:S(\liep) \ra \tits$ is an homeomorphism {(see Section \ref{titssection}), by \cite[Lemma 4.9]{bgs}}, $\la$ is lower semicontinuous
and for
  $v\in S(\liep)$ it follows:
    \begin{gather}
      \label{la-exp}
      \la_x (\e(v)) = \lim_{t\to +\infty} \desudt \psi_x(\exp(tv)K) =
      \lim_{t\to +\infty} \desudt \Psi(x, \exp(-tv)).
    \end{gather}
%In the following section we characterize semi-stability and polystability of a point $x$ by numerical criteria, t
\section{Stability}
 \label{sec:stab-with-resp}
 Let $(\spaz,G, K) $ be as above and let $\Psi$ be a Kempf-Ness
 function. In particular, according to Definition \ref{def-kn} we assume that $\Psi$ satisfies conditions ($P1$)--($P5$).
 \begin{defin}
   \label{stabilita}
   Let $x\in \spaz$. Then:
   \begin{enumerate}
   \item $x$ is \enf{polystable} if $G x \cap \mume \neq \vacuo$.
   \item $x$ is \enf{stable} if it is polystable and $\lieg_x$ is
     conjugate to a subalgebra of $\liek$.
   \item $x$ is \enf{semi--stable} if
     $\overline{G x} \cap \mume \neq \vacuo$.
   \item $x$ is \enf{unstable} if it is not semi--stable.
     % \memento{ Probabilmente in tutte queste definizioni potremmo
     % sostituire alla chiusura delle orbite, le orbite unite con i
     % punti limite dei gruppi a 1 parametro $\exp(it\x ) \cd x$.}
   \end{enumerate}
 \end{defin}
\begin{remark}
   % \begin{remark}\label{Ginv}
   The four conditions above are $G$-invariant in the sense that if a
   point $x$ satisfies one of them, then every point in the orbit of
   $x$ satisfy the same condition. This follows directly from the definition
   for polystability, semi--stability and unstability, while for %  . To check that
   stability %is also $G$-invariant
    it is enough to recall that
   $\lieg_{g x} = \Ad(g) (\lieg_x)$.
   % \end{remark}
 \end{remark}

The following result establishes a relation between the Kempf-Ness function and polystable points.
\begin{prop}\label{critical-point}
  Let $x\in \misu$. The following conditions are equivalent:
  \begin{enumerate}
  \item $g\in G$ is a critical point of $\Psi(x, \cd)$;
  \item $\fun(gx) =0$;
  \item $g\meno K$ is a critical point of $\psi_x$.
  \end{enumerate}
\end{prop}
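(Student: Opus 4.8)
The plan is to establish the two equivalences $(1)\Leftrightarrow(2)$ and $(2)\Leftrightarrow(3)$; both are differential identities obtained by unwinding the defining properties of a Kempf-Ness function, chiefly the cocycle condition $(P4)$ and the left $K$-invariance $(P2)$. The common ingredient is the defining formula for the gradient map: for $\x\in\liep$ and any $y\in\spaz$ one has $\sx\fun(y),\x\xs=\desudtzero\Psi(y,\exp(t\x))$.

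For $(1)\Leftrightarrow(2)$ I would identify $T_gG$ with $\lieg=\liek\oplus\liep$ by sending $\eta\in\lieg$ to the velocity at $t=0$ of $t\mapsto\exp(t\eta)g$; as $\eta$ runs over $\lieg$ these velocities span $T_gG$. If $\eta\in\liek$ then $\exp(t\eta)\in K$, so $(P2)$ forces $t\mapsto\Psi(x,\exp(t\eta)g)=\Psi(x,g)$ to be constant and its derivative to vanish identically; thus the $\liek$-directions are automatically critical, and $g$ is a critical point of $\Psi(x,\cd)$ if and only if $(d\Psi(x,\cd))_g$ vanishes on the $\liep$-directions alone. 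For $\x\in\liep$ the cocycle condition $(P4)$ gives $\Psi(x,\exp(t\x)g)=\Psi(x,g)+\Psi(gx,\exp(t\x))$, whence $\desudtzero\Psi(x,\exp(t\x)g)=\sx\fun(gx),\x\xs$. Hence $g$ is critical for $\Psi(x,\cd)$ precisely when $\sx\fun(gx),\x\xs=0$ for all $\x\in\liep$, i.e. when $\fun(gx)=0$.

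For $(2)\Leftrightarrow(3)$ I would re-express this same derivative through $\psi_x$. Since $\psi_x(hK)=\Psi(x,h\meno)$, one has $\Psi(x,\exp(t\x)g)=\psi_x(g\meno\exp(-t\x)K)$, so $\sx\fun(gx),\x\xs=\desudtzero\psi_x(g\meno\exp(-t\x)K)$. The curve $t\mapsto g\meno\exp(-t\x)K=L_{g\meno}(\exp(-t\x)K)$ passes through $g\meno K$ at $t=0$ with velocity $-(dL_{g\meno})_o(\dga^\x(0))$. As $\x$ ranges over $\liep$ the vector $\dga^\x(0)$ ranges over all of $T_oX\cong\liep$, and $(dL_{g\meno})_o\colon T_oX\to T_{g\meno K}X$ is a linear isomorphism, so these velocities fill $T_{g\meno K}X$. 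Therefore $\fun(gx)=0$ if and only if $(d\psi_x)_{g\meno K}$ annihilates every tangent vector at $g\meno K$, which is exactly the statement that $g\meno K$ is a critical point of $\psi_x$.

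The substantive point, as opposed to routine bookkeeping with left and right translations, is to ensure that the one-parameter families actually sweep out the whole tangent space, so that ``critical point'' is read as vanishing of the full differential. This is why in $(1)\Leftrightarrow(2)$ the $\liek$-directions are split off using $(P2)$, and why in $(2)\Leftrightarrow(3)$ I record both that $(dL_{g\meno})_o$ is an isomorphism and that $\x\mapsto\dga^\x(0)$ maps $\liep$ onto $T_oX$. Beyond these identifications, no input other than the defining properties $(P1)$--$(P5)$ of the Kempf-Ness function is needed.
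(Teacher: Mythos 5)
Your proposal is correct and follows essentially the same route as the paper: the cocycle condition $(P4)$ yields $\Psi(x,\exp(t\x)g)=\Psi(x,g)+\Psi(gx,\exp(t\x))$, whose $t$-derivative at $0$ is $\sx\fun(gx),\x\xs$, and the $K$-invariance $(P2)$ together with the identification $\psi_x(hK)=\Psi(x,h\meno)$ gives the remaining equivalences. You merely spell out more explicitly than the paper does why the one-parameter curves sweep out the full tangent spaces of $G$ and of $X$, which is a harmless (and welcome) elaboration.
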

\begin{proof}
  Let $\x \in \liep$. Using the cocycle condition ($P4$), one gets:
  \begin{gather*}
    \Psi(x, \exp(t\x )g ) = \Psi (x,g)+ \Psi (gx,\exp(t\x ) ).
  \end{gather*}
  Therefore,
  \begin{gather}
    \label{derivata-ovunque}
    \deze \Psi(x, \exp(t\x ) g) = \deze \Psi(gx, \exp(t\x )) = \sx
    \fun (gx), \x \xs.
  \end{gather}
  Since for any $k\in K$, $\Psi(x,kg)=\Psi(x,g)$, then $\fun (gx)=0$
  if and only if $g$ is a critical point of $\Psi (x,\cdot)$ if and
  only if $g\meno K$ is a critical point of $\psi_x$.
\end{proof}
\begin{prop}\label{compatible}
   If $\fun (x)=0$, then $G_x$ is compatible.
 \end{prop}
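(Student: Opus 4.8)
The plan is to verify directly the two inclusions defining compatibility, namely $L\exp(\tilde \liep)\subset G_x$ and $G_x \subset L\exp(\tilde \liep)$, where $L=G_x\cap K$ and $\tilde\liep = \lieg_x\cap \liep$. The first inclusion is immediate: $L\subset G_x$ by definition, and since the action is continuous and $\spaz$ is Hausdorff the stabilizer $G_x$ is closed, hence a Lie subgroup with Lie algebra $\lieg_x$, so that $\exp(\tilde\liep)\subset\exp(\lieg_x)\subset G_x$. The entire content is therefore the reverse inclusion, for which I would fix $g\in G_x$, write its Cartan decomposition $g=k\exp(\x)$ with $k\in K$ and $\x\in\liep$, and prove that in fact $k\in L$ and $\exp(\R\x)\subset G_x$; this yields $\x\in\lieg_x\cap\liep=\tilde\liep$ and hence $g\in L\exp(\tilde\liep)$.

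The key preliminary step is to show that $\Psi(x,\cdot)$ vanishes identically on $G_x$. Since $\fun(x)=0$, Proposition \ref{critical-point} (applied with $g=e$) gives that $o=eK$ is a critical point of $\psi_x$; because $\psi_x$ is geodesically convex on the Hadamard manifold $X$ by Lemma \ref{convstab}, a critical point is automatically a global minimum, and as $\psi_x(o)=\Psi(x,e)=0$ we conclude $\psi_x\geq 0$ on $X$. Now for $g\in G_x$ I would evaluate $\psi_x$ at the two points $gK$ and $g\meno K$: by \eqref{defpsi} one has $\psi_x(gK)=\Psi(x,g\meno)\geq 0$ and $\psi_x(g\meno K)=\Psi(x,g)\geq 0$. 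On the other hand, by Remark \ref{stabilizzatore-lineare} the restriction $\Psi(x,\cdot)|_{G_x}$ is a homomorphism into $(\R,+)$, so $\Psi(x,g\meno)=-\Psi(x,g)$. Combining these, $\Psi(x,g)\geq 0$ and $-\Psi(x,g)\geq 0$, whence $\Psi(x,g)=0$ for every $g\in G_x$.

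With this in hand the argument closes quickly. For $g=k\exp(\x)\in G_x$, the $K$-invariance $(P2)$ gives $\Psi(x,\exp(\x))=\Psi(x,g)=0$. Set $h(t):=\Psi(x,\exp(t\x))$; by $(P3)$ it is convex, and $h(0)=\Psi(x,e)=0$ with $h'(0)=\langle \fun(x),\x\rangle=0$, so $0$ is its minimum. Since moreover $h(1)=\Psi(x,\exp(\x))=0$, convexity forces $h\equiv 0$ on $[0,1]$; as $h$ is smooth by $(P1)$ the two-sided derivative $\frac{\mathrm{d^2}}{\mathrm{dt}^2 }\bigg\vert_{t=0}\Psi(x,\exp(t\x))=h''(0)=0$. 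By the equality case of $(P3)$ this means $\exp(\R\x)\subset G_x$, so $\x\in\tilde\liep$, and then $k=g\exp(-\x)\in G_x\cap K=L$. Hence $g\in L\exp(\tilde\liep)$, completing the reverse inclusion and the proof.

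The step I expect to be the crux is the vanishing $\Psi(x,\cdot)|_{G_x}\equiv 0$: it is what upgrades the hypothesis $\fun(x)=0$, a single linear condition at the base point $o$, into global information about the stabilizer, and it hinges on combining the sign $\psi_x\ge 0$ (coming from convexity plus Proposition \ref{critical-point}) with the homomorphism property, used symmetrically at $gK$ and at $g\meno K$. A minor technical point worth flagging is that $h$ is shown to vanish only on $[0,1]$, so one must use smoothness from $(P1)$ to pass to the two-sided second derivative before invoking the equality case of $(P3)$.
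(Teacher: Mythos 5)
Your proof is correct. The overall skeleton matches the paper's: write $g=k\exp(\x)$, show the convex function $t\mapsto\Psi(x,\exp(t\x))$ is flat on $[0,1]$, deduce $\frac{\mathrm{d^2}}{\mathrm{dt}^2}\big\vert_{t=0}\Psi(x,\exp(t\x))=0$, and invoke the equality case of $(P3)$ to get $\exp(\R\x)\subset G_x$, hence $\x\in\lieg_x\cap\liep$ and $k\in G_x\cap K$. Where you diverge is in how the endpoint information is obtained. The paper works one derivative up: it sets $f(t)=\fun^{\x}(\exp(t\x)x)$, which by the cocycle condition is $\frac{\mathrm{d}}{\mathrm{dt}}\Psi(x,\exp(t\x))$, gets $f(1)=0$ directly from the $K$-equivariance of $\fun$ (Proposition \ref{equivarianza}) applied to $\fun(\exp(\x)x)=\fun(k\meno x)=0$, and concludes from $f(0)=f(1)=0$ and $f'\geq 0$. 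You instead work at the level of $\Psi$ itself, first proving the intermediate fact $\Psi(x,\cdot)\vert_{G_x}\equiv 0$ by combining the global minimality of $\psi_x$ at $o$ (critical point of a geodesically convex function on a Hadamard manifold) with the homomorphism property of Remark \ref{stabilizzatore-lineare} applied symmetrically to $g$ and $g\meno$. Both arguments are sound; the paper's is a line shorter, while yours isolates a reusable fact (the Kempf--Ness function vanishes identically on the stabilizer of a zero of $\fun$) and makes explicit the one-sided-to-two-sided second derivative passage via $(P1)$, which the paper elides. Your remark that $G_x$ is closed, so that $\exp(\R\x)\subset G_x$ really does give $\x\in\lieg_x$, is a detail worth having stated.
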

 \begin{proof}
   Let $g\in G_x$. Then $g=k \exp (\x)$ for some $k\in K$ and
   $\x\in \liep$. By Proposition \ref{equivarianza}, we have
   $\fun (\exp (\x) x)=0$. Let $f(t):=\fun^{\x} (\exp (t\x)x)$. Then
   $f(0)=f(1)=0$ and
   \begin{gather*}
     \desudt f(t)=\desudt \fun^{\x} (\exp
     (t\x)x)=\frac{\mathrm{d^2}}{\mathrm{dt}^2 } \Psi(x,\exp(t\x))
     \geq 0.
   \end{gather*}
   Therefore
   $\frac{\mathrm{d^2}}{\mathrm{dt}^2 } \Psi(x,\exp(t\x))=0$ for
   $0\leq t \leq 1$.  It follows from ($P3$) that $\exp(t \x)x=x$ for any $t\in \R$ and thus
   $G_x$ is compatible.
 \end{proof}
Next we give a numerical criteria for an element $x\in \spaz$ to be stable. We begin with the following lemma.
\begin{lemma}\label{intersezione-nulla}
If $\lia \subset \lieg$ is a subalgebra which is conjugate to a
    subalgebra of $\liek$, then $\lia \cap \liep =\{0\}$.
  \end{lemma}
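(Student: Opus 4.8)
The plan is to argue by contradiction, exploiting the sharp dichotomy between the compact and non-compact directions of $\lieg$: a nonzero element of $\liep$ generates a geodesic of $X=G/K$ that escapes to infinity, whereas being conjugate into $\liek$ forces the corresponding one--parameter subgroup to be relatively compact. So I would suppose, toward a contradiction, that there is some $v\in\lia\cap\liep$ with $v\neq 0$. By hypothesis there is $g\in G$ with $\Ad(g)(\lia)\subseteq\liek$, and in particular $\Ad(g)(v)\in\liek$. Since $\liek=\Lie(K)$ and $K$ is closed in $G$, the one--parameter subgroup generated by $\Ad(g)(v)$ remains in $K$, so using $\exp\bigl(t\,\Ad(g)(v)\bigr)=g\exp(tv)g^{-1}$ I would deduce the key inclusion
\begin{gather*}
\exp(tv)\in g^{-1}Kg\qquad\text{for all }t\in\R .
\end{gather*}

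Next I would observe that $g^{-1}Kg$ is compact, being the image of the compact group $K$ under conjugation, and that the orbit map $h\mapsto h\cdot o$ from $G$ to $X$ is continuous; hence the geodesic $\gamma^v(t)=\exp(tv)K=\exp(tv)\cdot o$ lies entirely in the compact set $(g^{-1}Kg)\cdot o\subseteq X$ and is therefore bounded. On the other hand, $X$ is a Hadamard manifold and $\gamma^v$ is a geodesic issuing from $o$ with initial speed $\|v\|>0$, so by completeness and global minimality $d(o,\gamma^v(t))=t\,\|v\|\to+\infty$ as $t\to+\infty$. This contradicts boundedness, so no such $v$ exists and $\lia\cap\liep=\{0\}$.

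The argument is essentially soft, and the two points I would treat with care are the inclusion $\exp(\liek)\subseteq K$ (where closedness of $K$ enters) and the claim that $\gamma^v$ leaves every compact set. The latter is the real crux: it is exactly the statement that geodesics in the symmetric space of non--compact type $X$ are unbounded, which holds uniformly even for central directions $v\in\mathfrak z(\lieg)\cap\liep$. I note that one could instead reason purely on eigenvalues of $\ad$ — conjugacy identifies the spectrum of $\ad(v)$ (symmetric, hence real eigenvalues for $v\in\liep$) with that of $\ad(\Ad(g)v)$ (skew, hence imaginary eigenvalues for $\Ad(g)v\in\liek$), forcing $\ad(v)$ nilpotent and symmetric, thus zero — but this route must then handle the central case $\ad(v)=0$ separately, whereas the geodesic argument disposes of it at once; for that reason I would present the geodesic version.
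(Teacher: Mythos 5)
Your proof is correct and rests on the same dichotomy as the paper's: for $0\neq v\in\liep$ the one-parameter group $\exp(\R v)$ is unbounded, while conjugacy of $v$ into $\liek$ traps it in the compact set $g^{-1}Kg$. The paper detects the unboundedness inside $G$ itself, via the diffeomorphism $K\times\liep\to G$ making $\exp(\R v)$ a closed copy of $\R$, whereas you pass to the geodesic $\exp(tv)\cdot o$ in the Hadamard manifold $X=G/K$; this is only a cosmetic difference.
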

  \begin{proof}
    It is enough to show that $\Ad (g)( \liek) \cap  \liep =\{0\}$
    for any $g\in G$.
  Let $X\in \Ad (g)( \liek) \cap  \liep$. By the Cartan decomposition $G=K\exp (\liep)$, it follows $\Gamma =\exp(\R X)$ is a closed abelian subgroup of $G$ isomorphic to $\R$. On the other hand $X=\Ad(g)(Y)$ for some $Y\in \liek$ which implies   $\Gamma=\Ad(g)(\exp(\R Y))$ is a torus. Hence   $X=0$.
\end{proof}
Consider the function: 
\begin{gather*}
  \La : \spaz \times \liep \ra [-\infty, +\infty],\\
  \La(x, \xi) : = \lim_{t\to +\infty} \desudt \Psi(x,\exp(t\xi))= \lim _{t\to +\infty} \desudt \psi_x (-t\xi K  ).
\end{gather*}
The following Lemma is proven in \cite[Lemma 2.10]{teleman-symplectic-stability}.
\begin{lemma}\label{linear-proper}
Let $V$ be a subspace of $\liep$. For a point $x\in \spaz$ the following conditions are equivalent:
\begin{enumerate}
\item The map 	$\Psi(x, \exp (\xi))$ is linearly proper on $V$, i.e. there exist positive constants
$C_1$ and $C_2$ such that:
\[
||\xi ||^2 \leq C_1 \Psi(x,\exp(\xi))+C_2, \quad \forall\, \xi \in V.
\]
\item $\Lambda(x,\xi)>0$ for any $\xi\in V\setminus\{0\}$.
\end{enumerate}
\end{lemma}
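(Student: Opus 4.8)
The plan is to reduce both implications to the one--variable convex functions $\beta_\xi(t):=\Psi(x,\exp(t\xi))$, $\xi\in V$, and to exploit that $\La(x,\xi)$ is exactly the asymptotic slope $\lim_{t\to+\infty}\beta_\xi'(t)$. By ($P1$) each $\beta_\xi$ is smooth, by Lemma \ref{convstab} it is convex, and by Remark \ref{stabilizzatore-lineare} one has $\beta_\xi(0)=\Psi(x,e)=0$. The elementary fact I would record first is that, for a convex $\beta$ with $\beta(0)=0$, the secant slopes $\beta(t)/t$ are nondecreasing and converge to $\lim_{t\to+\infty}\beta'(t)$; applied here this gives $\beta_\xi(t)/t\nearrow\La(x,\xi)$, the identity I will use throughout.

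For (a)$\Rightarrow$(b) I would fix $\xi\in V\setminus\{0\}$ and feed $t\xi$ into the properness inequality of (a): this yields $t^2\|\xi\|^2\le C_1\Psi(x,\exp(t\xi))+C_2$, hence $\beta_\xi(t)/t\ge (t\|\xi\|^2-C_2/t)/C_1\to+\infty$. Since $\beta_\xi(t)/t\to\La(x,\xi)$, this forces $\La(x,\xi)>0$, which is (b). This direction is short and presents no difficulty.

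For (b)$\Rightarrow$(a) the first step would be to promote the pointwise positivity of $\La(x,\cdot)$ to a uniform lower bound. By positive homogeneity of degree one of $\La(x,\cdot)$ it suffices to bound it below on the unit sphere $S(V):=S(\liep)\cap V$, which is compact. Here I would use the bridge to the maximal weight: by \eqref{la-exp} one has $\La(x,v)=\la_x(\e(-v))$ for $v\in S(\liep)$, and since $\la$ is lower semicontinuous for the sphere topology and $\e$ is a homeomorphism, $v\mapsto\La(x,v)$ is lower semicontinuous on $S(V)$. A lower semicontinuous function that is everywhere positive on a compact set attains a positive minimum, so $\delta:=\min_{v\in S(V)}\La(x,v)>0$.

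The second step would be to convert this uniform positive asymptotic slope into the properness estimate (a). I would argue by contradiction: were (a) to fail, there would be a sequence $\xi_n=r_nv_n$, $v_n\in S(V)$, along which $\Psi(x,\exp(\xi_n))$ grows too slowly relative to $\|\xi_n\|^2$; after passing to a subsequence with $v_n\to v_*\in S(V)$ and using convexity of the $\beta_{v_n}$ together with the continuity $\beta_{v_n}(T)\to\beta_{v_*}(T)$ furnished by ($P1$), one controls the secant slopes of $\beta_{v_*}$ and arrives at $\La(x,v_*)\le 0$, contradicting $\delta>0$. Assembling the resulting uniform bound then produces constants $C_1,C_2>0$ with $\|\xi\|^2\le C_1\Psi(x,\exp(\xi))+C_2$ for every $\xi\in V$, i.e. (a). I expect this second step to be the main obstacle: the real work is turning the direction--by--direction positivity of the asymptotic slope into a single global growth estimate uniform in $\xi$, which forces one to combine the compactness of $S(V)$, the lower semicontinuity of $\la$ coming from \eqref{la-exp}, and the convexity of the $\beta_v$ in a careful limiting argument.
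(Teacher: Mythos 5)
The paper does not actually prove this lemma --- it is quoted verbatim from \cite[Lemma 2.10]{teleman-symplectic-stability} --- so there is no in-paper argument to compare against; your proof is essentially the standard (Teleman) one. The reduction to the convex functions $\beta_\xi(t)=\Psi(x,\exp(t\xi))$ with $\beta_\xi(0)=0$, the identity $\beta_\xi(t)/t\nearrow \La(x,\xi)$, and the extraction of a convergent sequence of directions in the compact sphere $S(V)$ are correct and are the right tools. Two remarks on the soft steps: the lower semicontinuity of $\La(x,\cdot)$ on $S(V)$ does not need the detour through $\la_x$ and \eqref{la-exp}, which would require $(P6)$ (not assumed in Section \ref{sec:stab-with-resp}); it follows directly from $\La(x,v)=\sup_{t>0}\beta_v'(t)$, a supremum over $t$ of functions of $v$ that are continuous by $(P1)$. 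Moreover your first step (the uniform minimum $\delta>0$) is not actually needed: in the contradiction argument the pointwise positivity $\La(x,v_*)>0$ at the limit direction already suffices, since monotonicity of secant slopes gives $\beta_{v_*}(T)/T=\lim_n\beta_{v_n}(T)/T\le\liminf_n\beta_{v_n}(r_n)/r_n$ for every fixed $T$.

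The one genuine problem is the exponent in (a). Your argument for (b)$\Rightarrow$(a), carried out to the end, produces $\|\xi\|\le C_1\Psi(x,\exp(\xi))+C_2$ and cannot produce $\|\xi\|^2\le C_1\Psi(x,\exp(\xi))+C_2$: a positive asymptotic slope in every direction, even one bounded below by $\delta>0$ uniformly on $S(V)$, yields at best linear growth of $\Psi(x,\exp(\xi))$ in $\|\xi\|$, whereas the printed inequality forces quadratic growth. With the square the two conditions are in fact not equivalent: under $(P6)$ the slopes $\La(x,v)$ are finite, so $\Psi(x,\exp(tv))$ grows exactly linearly in $t$ and (a) as printed fails even for stable points (this already happens for the Kempf--Ness function of a projective action). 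The inequality should read $\|\xi\|\le C_1\Psi(x,\exp(\xi))+C_2$, which is Teleman's notion of linear properness and suffices for the later applications; your final ``assembling'' sentence silently promotes the linear bound to the quadratic one, and that step would fail. With the corrected exponent your proof is complete, modulo the routine observation that a failing sequence $\xi_n$ must satisfy $\|\xi_n\|\to\infty$ because $\Psi(x,\exp(\cdot))$ is bounded on bounded subsets of $V$. Your direction (a)$\Rightarrow$(b) is unaffected either way.
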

\begin{teo}\label{stabile}
Let $x\in \spaz$.
Then $x$ is stable if and only if $\La(x,\xi) >0 $ for any $\xi \in \liep \setminus\{0\}$.
\end{teo}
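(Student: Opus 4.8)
The plan is to translate the numerical condition $\La(x,\xi)>0$ into the geometric assertion that the convex function $\psi_x$ is an exhaustion of $X=G/K$, and then to read off polystability and the reductivity of the stabilizer from the structure theory already developed. The starting dictionary is this: since $\Psi(x,\exp(t\xi))$ is convex in $t$ by ($P3$), the limit $\La(x,\xi)$ exists in $(-\infty,+\infty]$, and because $\psi_x(\exp(-t\xi)K)=\Psi(x,\exp(t\xi))$, it is exactly the asymptotic slope of $\psi_x$ along the ray $t\mapsto\exp(-t\xi)K$. The key equivalence I would prove is that $\La(x,\xi)>0$ for all $\xi\in\liep\setminus\{0\}$ if and only if $\psi_x$ is an exhaustion of $X$. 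The forward direction comes from Lemma \ref{linear-proper}: positivity of $\La$ yields linear properness $\|\xi\|^2\leq C_1\Psi(x,\exp(\xi))+C_2$, and since $\psi_x(\exp(\eta)K)=\Psi(x,\exp(-\eta))$ with $d(o,\exp(\eta)K)=\|\eta\|$, this forces $\psi_x(p)\to+\infty$ as $p\to\infty$. The converse is the elementary one-variable fact that a convex $h$ with $h(t)\to+\infty$ has $\lim_t h'(t)>0$, applied to $h(t)=\psi_x(\exp(tv)K)$, which tends to $+\infty$ because $\psi_x$ is an exhaustion and geodesic rays leave every compact set; note this route avoids invoking ($P6$). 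Finally I would record that the condition is $G$-invariant: the cocycle identity \eqref{cociclo-3} gives $\psi_{g\meno x}=L_g^*\psi_x-\psi_x(gK)$, and since $L_g$ is an isometry of $X$, $\psi_{g\meno x}$ is an exhaustion exactly when $\psi_x$ is.

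For the implication that $\La>0$ forces stability, I would argue that a convex exhaustion of the Hadamard manifold $X$ attains its minimum at some $g\meno K$, so Proposition \ref{critical-point} gives $\fun(gx)=0$ and $x$ is polystable. Writing $y:=gx$ (so $\lieg_y=\Ad(g)\lieg_x$, and it suffices to place $\lieg_y$ inside $\liek$), Proposition \ref{compatible} shows $G_y$ is compatible, hence $\lieg_y=\liel\oplus(\lieg_y\cap\liep)$. If $0\neq\xi\in\lieg_y\cap\liep$, then $\exp(\R\xi)\subset G_y$ and, by Remark \ref{stabilizzatore-lineare}, $t\mapsto\Psi(y,\exp(t\xi))$ is a homomorphism, hence linear, so $\La(y,\xi)+\La(y,-\xi)=0$, contradicting $\La(y,\cdot)>0$ (which holds by $G$-invariance). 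Thus $\lieg_y\cap\liep=\{0\}$ and $\lieg_y=\liel\subset\liek$, so $\lieg_x$ is conjugate to a subalgebra of $\liek$.

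For the converse, stability gives $g$ with $\fun(y)=0$ for $y:=gx$, together with $\lieg_y$ conjugate into $\liek$, so $\lieg_y\cap\liep=\{0\}$ by Lemma \ref{intersezione-nulla}. Now $\fun(y)=0$ makes $o$ a minimizing critical point of the convex function $\psi_y$, so the slope $\frac{d}{dt}\Psi(y,\exp(t\xi))$ starts at $0$ and is nondecreasing, giving $\La(y,\xi)\geq0$; were it $0$ for some $\xi\neq0$, the slope would vanish on $[0,+\infty)$, forcing $\frac{d^2}{dt^2}\vert_{t=0}\Psi(y,\exp(t\xi))=0$ and hence $\exp(\R\xi)\subset G_y$ by ($P3$), i.e.\ $\xi\in\lieg_y\cap\liep=\{0\}$, a contradiction. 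So $\La(y,\cdot)>0$ on $\liep\setminus\{0\}$, and $G$-invariance transfers this back to $x$.

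The hard part will be the stabilizer step, namely showing $\lieg_x$ has no $\liep$-component. This is where the rigidity clause of ($P3$) (vanishing second derivative being equivalent to $\exp(\R\xi)\subset G_x$), the homomorphism property of Remark \ref{stabilizzatore-lineare}, and Lemma \ref{intersezione-nulla} all have to be combined, and where one is forced to pass from $x$ to a zero $y$ of $\fun$ and exploit the $G$-invariance of the numerical condition, since the gradient map $\fun$ is only $K$-equivariant and neither it nor the stabilizer transforms simply along the $G$-orbit.
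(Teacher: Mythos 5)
Your proof is correct, and its overall skeleton matches the paper's: locate a zero of $\fun$ on the orbit by showing $\psi_x$ attains a minimum, identify $\lieg_y\cap\liep$ via the rigidity clause of ($P3$), Remark \ref{stabilizzatore-lineare} and Lemma \ref{intersezione-nulla}, and transfer the numerical condition along the $G$-orbit. Where you genuinely diverge is in the two technical steps that the paper handles by hand. First, the paper proves properness of $\Psi(x,\exp(\cdot))$ in the converse direction by an explicit compactness argument (a minimum $C>0$ of $\La(x,\cdot)$ on $S(\liep)$, sequences $\xi_n$, $t_n$, a covering of the sphere), whereas you get it in one line from Lemma \ref{linear-proper}. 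Second, and more substantially, the paper transfers the condition from $y=gx$ back to $x$ by writing $\exp(\xi)g\meno=k(\xi)\exp(\theta(\xi))$ and invoking a Mundet-type estimate $\|\xi\|^2\leq A_1\|\theta(\xi)\|^2+A_2$ to carry linear properness across the Cartan decomposition; you instead observe that the condition $\La(x,\cdot)>0$ on $\liep\setminus\{0\}$ is equivalent to $\psi_x$ being an exhaustion (an equivalence the paper records only as a remark after the theorem), and that the exhaustion property is manifestly preserved under the cocycle identity $L_g^*\psi_x=\psi_{g\meno x}+\psi_x(gK)$ because $L_g$ is an isometry. This buys you a proof that avoids the external norm estimate entirely and makes the $G$-invariance of the stability condition transparent; what it costs is that the paper's route, by producing the explicit linear-properness constants for $\Psi(x,\cdot)$ itself, is closer in form to the quantitative statements one sometimes wants downstream. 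Both arguments are sound.
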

\begin{proof}
 Let first $x\in \spaz$ be stable. Then $\fun(g x) =0$ for some $g\in G$ and by Proposition \ref{critical-point}, $g$ is a critical point of
  $\Psi(x, \cd)$. Set $y=g x$. We start by proving
  $\La (y, \xi) >0$ for any $\xi \in \liep\setminus\{0\}$.  By $(P3)$ the function
  $f(t)=\Psi(y, \exp(t\xi))$ is a convex function. Hence:
  \begin{gather*}
    \La(y,\xi) \geq f'(0)=\desudtzero \Psi(y, \exp( t\xi ) )= \sx \fun
    (y), \xi \xs=0.
  \end{gather*}
  Assume $\La (y, \xi )=0$.  By assumption $f$ is a convex function satisfying $\lim_{t\to +\infty} f'(t)= 0$ and $f'(0)=0$. Hence $f'(t)=0$ for $t\geq 0$ and so
  $\frac{\mathrm{d^2}}{\mathrm{dt}^2 } \Psi(x,\exp(t \xi ))=0$ for any
  $t\geq 0$. By ($P3$) it follows that
  $\exp(\R \xi ) \subset G_y$, so $\xi \in \lieg_y\cap \liep$.  Since
  $x$ is stable, $\mathfrak g_y = \Ad(g) (\lieg_x)$ is conjugate to a
  subalgebra of $\liek$, thus Lemma \ref {intersezione-nulla} implies
  that $\xi =0$.

  By Lemma \ref{linear-proper} the function $\Psi(y,\cdot)$ is linearly proper on $\liep$. By the cocycle condition we have
  \[
  \Psi(x,\exp(\xi))=\Psi(g^{-1}y,\exp(\xi))=\Psi(y,\exp(\xi)g^{-1})-\Psi(y,g^{-1}).
  \]
Write $\exp(\xi)g^{-1}=k(\xi )\exp(\theta(\xi))$. Then $\Psi(x,\exp(\xi))=\Psi(y,\exp (\theta (\xi)))-\Psi(y,g^{-1})$.
Using the same arguments in  \cite{mundet-Crelles}, we get  an estimate of the form
  \[
  || \xi ||^2 \leq A_1 || \theta(\xi) ||^2 + A_2,
  \]
  where $A_1$ and $A_2$ are positive constants. Therefore the linearly properness of $\Psi(y,\cdot)$ on $\liep$ implies the linearly properness of $\Psi(x,\cdot)$ on $\liep$. Hence, by Lemma \ref{linear-proper},
  $\Lambda(x,\xi)>0$ for any $\xi\in \liep\setminus\{0\}$.

Assume now that $\Lambda (x,\xi) > 0$ for any $\xi\in \liep\setminus\{0\}$. Then $\Lambda(x,\cdot)$ restricted on the unit sphere $S(\liep)$ of $\liep$ has  a minimum $C>0$.

Let $\xi \in S(\liep)$ and let $f(t)= \Psi(x,\exp(t \xi))$. The function $f$  is a convex function and  $\lim_{t\to + \infty} f'(t)\geq C$, respectively $\lim_{t\to - \infty} f'(t)\leq -C$. Hence $f$ has a global minimum and
$\lim_{t\to + \infty} f(t)=+\infty$. Thus, for any $M>0$, there exists  $t(\xi )>0$ such that  $f(t)=\Psi(x,\exp(t \xi))> M$ for any $t\geq t(\xi)$.

We claim that there exists $\gamma_o>0$ such that $\Psi(x,\exp(\xi))> \frac{M}{2}$ for $\xi \in \liep$ with $||\xi|| \geq \gamma_o$. Indeed, otherwise there exist sequences $\xi_n \in S(\liep)$ and $t_n \in \R$  with $t_n \mapsto +\infty$ such that $\Psi(x,\exp(t_n \xi_n )) \leq \frac{M}{2}$. We may assume $\xi_n \mapsto \xi_o$. Since
$\Psi(x,\exp( t\xi_o ))\geq M$ for $t>t(\xi_o)$ and keeping in mind that the function
\[
\R \times S(\liep) \lra \R, \qquad (t,\xi)\to \Psi(x,\exp(t\xi)),
\]
is continuous, there exists a neighborhood $U$ of $\xi_o$ in $S(\liep)$ and a neighborhood $(t(\xi_o)-\epsilon,t(\xi_o)+\epsilon)$ of $t(\xi_o)$ in $\R$, such that $\Psi (x, \exp (t \xi))>\frac{M}{2}$ for any $t\in (t(\xi_o )-\epsilon,t( \xi_o )+\epsilon)$ and for any $\xi\in U$. Now,  there exists $\tilde n\in \mathbb N$ such that $\xi_n \in U$ and $t_n >t( \xi_o )$ for $n\geq \tilde n$. Since the function $t\mapsto \Psi(x,\exp(t\xi))$ increases, it means $\Psi(x,\exp (t_n \xi_n ))>\frac{M}{2}$ for $n\geq \tilde n$ which is a contradiction.
Now, keeping in mind that $\psi_x \circ \exp (\xi)=\Psi (x,\exp(-\xi))$, we have proved that the function $\psi_x \circ \exp$ has a minimum and so a critical point. Since $\exp:\liep\lra G/K$ is a diffeomorphism, it follows that $\psi_x$ has a critical point. By Proposition \ref{critical-point} the point $x$ is polystable. Let $g\in G$ such that $\fun (g x)=0$. Set $y=g x$. Since
\begin{gather*}
 \La(y,\xi) \geq \desudtzero \Psi(y, \exp( t\xi ) )= \sx \fun (y), \xi \xs=0,
\end{gather*}
by the same arguments used before, we have $\La (y, \xi)>0$ for any $\xi\in \liep\setminus\{0\}$. To conclude the proof we prove  $\lieg_{y} \cap \liep=\{0\}$.

Let $\xi \in \lieg_{y} \cap \liep$. By Remark \ref{stabilizzatore-lineare} the function
$
t\mapsto \Psi( y,\exp(t\xi))
$
is linear. Since both $\La(y,\xi)$ and $\La(y,-\xi)$ are positive it follows
\[
\lim_{t\mapsto +\infty} \desudt \Psi(y,\exp(t\xi))=a\geq 0, \  \lim_{t\mapsto +\infty} \desudt \Psi(y,\exp(-t\xi))=-a\geq 0.
\]
This implies $a=0$, $\Lambda(y,\xi)=0$ and so $\xi=0$. By Proposition \ref{compatible}, $\lieg_{y}$ is a compatible subalgebra of $\lieg$ with $\lieg_{y}\cap \liep=\{0\}$. Hence $\mathrm{Ad}(g)(\lieg_x) =\lieg_{y}\subset \liek$ proving $x$ is stable.
\end{proof}
\begin{remark}
One may prove that the condition $\La(x,\xi)>0$ for any $\xi \in \liep\setminus\{0\}$ is equivalent to $\psi_x$ being an exhaustion.
\end{remark}

% \begin{say}
%   $G$ is a complex algebraic group. An $m$-dimentional
%   \emph{algebraic torus} $A \subset G$ is an algebraic subgroup that
%   is isomorphic to $(\C^*)^m$ \cite[p. 114]{borel-linear-algebraic}.
%   It is easy to show that a subgroup $A\subset G$ is an algebraic
%   torus if and only if $A= g T^\C g\meno$ where $g\in G$ and
%   $T\subset K$ is a compact torus.
% \end{say}
\begin{cor}
  \label{stabcomp}
  If $x\in \spaz$ is stable, then $G_x$ is compact.
\end{cor}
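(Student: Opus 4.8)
The plan is to reduce the statement to a point where the gradient map vanishes, invoke the compatibility of the stabilizer there, and then use the stability hypothesis to eliminate its $\liep$-part.

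First I would use that a stable point is in particular polystable, so by Definition \ref{stabilita} there exists $g\in G$ with $\fun(gx)=0$. Set $y:=gx$. Since the stabilizers transform by $G_y=gG_xg\meno$, the group $G_y$ is conjugate to $G_x$, so it suffices to prove that $G_y$ is compact.

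Next I would apply Proposition \ref{compatible} to $y$: because $\fun(y)=0$, the stabilizer $G_y$ is compatible, i.e. $G_y=L\exp(\tilde\liep)$ with $L=G_y\cap K$ and $\tilde\liep=\lieg_y\cap\liep$. On the other hand, stability of $x$ gives that $\lieg_x$ is conjugate to a subalgebra of $\liek$, and hence so is $\lieg_y=\Ad(g)(\lieg_x)$. By Lemma \ref{intersezione-nulla} this forces $\lieg_y\cap\liep=\{0\}$, that is $\tilde\liep=\{0\}$. Consequently $\exp(\tilde\liep)=\{e\}$ and $G_y=L=G_y\cap K\subset K$. As the stabilizer of a point of the Hausdorff space $\spaz$ under a continuous action, $G_y$ is closed in $G$, hence closed in the compact group $K$, and therefore compact. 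Finally $G_x=g\meno G_y g$ is compact as well.

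I do not expect a genuine obstacle here, since the substance is already packaged in Proposition \ref{compatible} and Lemma \ref{intersezione-nulla}; the only point requiring care is the bookkeeping in passing from $x$ to the zero $y=gx$, namely that conjugation by $g$ simultaneously sends $G_x$ to $G_y$ and $\lieg_x$ to $\lieg_y$, so that the property ``conjugate into $\liek$'' is preserved and compactness transfers back to $G_x$. One should also not forget to record that $G_y$ is closed in $G$ (a subgroup of a compact group need not be closed), which is what upgrades the inclusion $G_y\subset K$ to actual compactness.
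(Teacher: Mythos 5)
Your proof is correct and follows essentially the same route as the paper: pass to $y=gx$ with $\fun(y)=0$, invoke Proposition \ref{compatible} to get compatibility of $G_y$, use stability together with Lemma \ref{intersezione-nulla} to conclude $\lieg_y\cap\liep=\{0\}$, and transfer compactness back by conjugation. The paper phrases the last step via finiteness of the component set and compactness of $G_y^0$, while you use the decomposition $G_y=L\exp(\tilde\liep)$ with $\tilde\liep=\{0\}$ directly, but this is only a cosmetic difference.
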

\begin{proof}
  Let $g\in G$ be such that $\fun(g x)=0$ and set $y=g x$. By
  Proposition \ref{compatible} the stabilizer of $y$, i.e. $G_y$, is compatible and so has only
  finitely many connected components. Moreover $G_y^0$ is compact
  since $\lieg_y \subset \liek$. It follows that $G_y$ and
  $G_x = g\meno G_y g $ are both compact.
\end{proof}
If $\spaz'$ is a $G$-invariant subspace of $ \spaz $, the
restriction of $\Psi $ to $G\times \spaz'$ is a Kempf-Ness function
for $(\spaz', G, K)$. The functions $\Lambda$ and $\fun$ for
$(\spaz', G, K)$ are simply the restrictions of those for $\spaz$.
If $G'\subset G$ is a compatible subgroup of $G$, i.e., $G'=K'\exp(\liep')$,
then $K'\subset K$, $\liep' \subset \liep$ and $X':= G'/K' \hookrightarrow X$ is a totally geodesic inclusion.  If $\Psi$ is a Kempf-Ness
  function for $(G,K, \spaz)$, then
  $ \Psi^{K'}:=\Psi\restr{\spaz\times G'}$ is a Kempf-Ness function
  for $(G', K', \spaz)$.  The related functions are
  \begin{gather}\label{restrizione}
    \fun^{K'}: \spaz \ra {\liep}^* , \qquad \fun^{K'}(x):=
    \fun(x)\restr
    {\liep'},\\
    \psi^{K'}_x := \psi _x \restr{X'}, \qquad \Lambda^{K'} = \Lambda
    \restr{\spaz\times \liep'}.
  \end{gather}
A subalgebra contained in $\liep$ must be abelian since $[\liep,\liep]\subset \liek$.
The following Corollary is analogous to the stability part in the
Hilbert-Mumford criterion.
\begin{cor}\label{Hilbert-Mumford}
A point $x \in \spaz$ is $G$-stable if and only if it is
$A$-stable for any  abelian group $A=\exp (\lia)$, where $\lia$ is a subalgebra of $\lieg$ contained in $\liep$.
\end{cor}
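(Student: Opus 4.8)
The plan is to deduce both implications directly from the numerical criterion for stability, Theorem \ref{stabile}, by restricting the Kempf-Ness function to the compatible abelian subgroups $A=\exp(\lia)$. First I would record the structural fact that makes this reduction legitimate: if $\lia\subset\liep$ is a subalgebra it is automatically abelian (since $[\liep,\liep]\subset\liek$), and $A=\exp(\lia)$ is a compatible subgroup of $G$. Indeed its Cartan decomposition is $A=L\exp(\lia)$ with $L=A\cap K=\{e\}$, where the equality $A\cap K=\{e\}$ is forced by the injectivity of the Cartan diffeomorphism $f\colon K\times\liep\to G$, $f(k,\xi)=k\exp\xi$, which gives $K\cap\exp(\liep)=\{e\}$. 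Hence $A$ is an abelian (vector) reductive group whose maximal compact is trivial and whose $\liep$-part is exactly $\lia$.

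Next I would invoke the restriction principle stated just before the corollary: since $A$ is compatible, $\Psi^{K'}:=\Psi\restr{\spaz\times A}$ is a Kempf-Ness function for $(\spaz,A,\{e\})$, and its associated data are the restrictions of those for $G$; in particular $\Lambda^{K'}=\Lambda\restr{\spaz\times\lia}$. Applying Theorem \ref{stabile} to the triple $(\spaz,A,\{e\})$ then gives the characterization I want: $x$ is $A$-stable if and only if $\Lambda(x,\xi)>0$ for every $\xi\in\lia\setminus\{0\}$.

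With this in hand both directions are immediate. For the forward implication, if $x$ is $G$-stable then by Theorem \ref{stabile} we have $\Lambda(x,\xi)>0$ for all $\xi\in\liep\setminus\{0\}$; restricting to any subalgebra $\lia\subset\liep$ gives $\Lambda(x,\xi)>0$ on $\lia\setminus\{0\}$, so $x$ is $A$-stable. For the converse, given an arbitrary $\xi\in\liep\setminus\{0\}$ I would test the hypothesis on the one-dimensional subalgebra $\lia=\R\xi$ (a subalgebra because $[\xi,\xi]=0$, and contained in $\liep$): $A$-stability for $A=\exp(\R\xi)$ yields $\Lambda(x,\xi)>0$. As $\xi$ was arbitrary, $\Lambda(x,\cdot)>0$ on all of $\liep\setminus\{0\}$, and Theorem \ref{stabile} then shows that $x$ is $G$-stable. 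Note that only the one-dimensional subgroups are actually needed for the converse.

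The argument is essentially bookkeeping once Theorem \ref{stabile} is available; the only point requiring genuine care is the verification that each $A=\exp(\lia)$ is compatible and that Theorem \ref{stabile} genuinely applies to it, i.e.\ that $\Psi^{K'}$ is a Kempf-Ness function for $(\spaz,A,\{e\})$ with $\Lambda^{K'}=\Lambda\restr{\spaz\times\lia}$. This is exactly the content of the restriction paragraph preceding the corollary, so the main obstacle is really just confirming that the degenerate features of $A$ (trivial compact part, abelian vector group) fit the abstract framework, which they do.
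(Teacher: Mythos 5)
Your proposal is correct and follows essentially the same route as the paper: both reduce the statement via Theorem \ref{stabile} and the restriction principle to the equivalence of $\Lambda(x,\cdot)>0$ on $\liep\setminus\{0\}$ with $\Lambda^A(x,\cdot)>0$ on each $\lia\setminus\{0\}$, with the forward direction being trivial restriction and the converse obtained by testing on $\lia=\R\xi$. Your additional care in checking that $A$ is compatible with trivial compact part is a reasonable elaboration of what the paper leaves implicit.
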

\begin{proof}
By Theorem \ref{stabile} it is enough to prove that we have $\Lambda(x,\xi) >0$ for any $\xi\in \liep\setminus\{0\}$ if and only if
for any  abelian group $A=\exp (\lia)$, where $\lia$ is a subalgebra of $\lieg$ contained in $\liep$ we have $\Lambda^A (x,\xi)>0$ for any $\xi\in \lia\setminus\{0\}$. The necessary condition is trivial, being $\Lambda^A (x,\xi)$ the restriction of $\Lambda (x,\xi)$ to $\lia$. For the sufficient, observe that for any $\xi\in S(\liep)$ we can set $\lia=\R \xi$ and conclusion follows since with this choice we have $\Lambda (x, \xi ) = \Lambda^A (x, \xi ) $.
\end{proof}
We conclude this section with the following interesting result.
\begin{cor}\label{stable-open-abstract-setting}
The function $\Lambda : \spaz \times S(\liep) \lra \R $ is lower semincontinuos  and  the set of stable points is open in $\spaz$.
\end{cor}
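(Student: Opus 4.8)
The plan is to prove the two assertions separately, deriving the openness of the stable locus from the lower semicontinuity of $\La$ together with the compactness of $S(\liep)$. For the lower semicontinuity I would first rewrite $\La$ as a supremum of continuous functions. Fix $x\in\spaz$ and $v\in\liep$, and put $f(t)=\Psi(x,\exp(tv))$. By $(P3)$ the function $f$ is convex, and by Remark \ref{stabilizzatore-lineare} one has $f(0)=\Psi(x,e)=0$; hence the slope $t\mapsto f(t)/t$ is nondecreasing on $(0,+\infty)$ and
\begin{equation*}
\La(x,v)=\lim_{t\to+\infty}f'(t)=\sup_{t>0}\frac{\Psi(x,\exp(tv))}{t}.
\end{equation*}
Since a supremum of continuous functions is lower semicontinuous, it then suffices to show that for each fixed $t>0$ the map $(x,v)\mapsto\Psi(x,\exp(tv))$ is continuous on $\spaz\times S(\liep)$.

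To prove this joint continuity I would use the cocycle condition to express $\Psi$ as an integral of $\fun$. Taking $g=\exp(sv)$ and $h=\exp(rv)$ in $(P4)$ gives $\Psi(x,\exp((r+s)v))=\Psi(x,\exp(sv))+\Psi(\exp(sv)x,\exp(rv))$; differentiating in $r$ at $r=0$ and using the definition of $\fun$ yields $\frac{\mathrm d}{\mathrm ds}\Psi(x,\exp(sv))=\sx\fun(\exp(sv)x),v\xs$, whence
\begin{equation*}
\Psi(x,\exp(tv))=\int_0^t\sx\fun(\exp(sv)x),v\xs\,\mathrm ds.
\end{equation*}
The integrand is continuous in $(x,v,s)$ by $(P5)$ together with the continuity of the $G$-action, of $\exp$ and of the pairing, so the integral depends continuously on $(x,v)$. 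This gives the joint continuity and hence the lower semicontinuity of $\La$.

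For the openness of the stable locus, recall from Theorem \ref{stabile} that $x$ is stable exactly when $\La(x,\xi)>0$ for all $\xi\in\liep\setminus\{0\}$; since $\La(x,\cdot)$ is positively homogeneous of degree one (rescale $t$ in the definition), this is equivalent to $\La(x,v)>0$ for all $v\in S(\liep)$. Now fix a stable point $x_0$. As $S(\liep)$ is compact and $\La(x_0,\cdot)$ is lower semicontinuous, it attains a minimum $C$, which is positive. For each $v\in S(\liep)$ the lower semicontinuity of $\La$ at $(x_0,v)$ furnishes neighbourhoods $U_v\ni x_0$ and $V_v\ni v$ with $\La>C/2$ on $U_v\times V_v$; by compactness finitely many $V_{v_1},\dots,V_{v_n}$ cover $S(\liep)$, and $U:=\bigcap_{i=1}^n U_{v_i}$ is a neighbourhood of $x_0$ on which $\La(x,v)>C/2>0$ for every $v\in S(\liep)$. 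Every point of $U$ is therefore stable, so the stable locus is open.

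The main obstacle is precisely the joint continuity of $(x,v)\mapsto\Psi(x,\exp(tv))$: the axioms postulate only smoothness of $\Psi(x,\cdot)$ for fixed $x$ and continuity of $\fun$, so the integral representation above is genuinely needed to transfer continuity from $\fun$ to $\Psi$. Once this is in hand, the supremum description of $\La$ and the tube-type covering argument over the compact sphere are routine.
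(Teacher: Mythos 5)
Your proof is correct and follows essentially the same route as the paper, which simply defers to \cite[Lemma 3.9 and Corollary 3.10]{bgs}: there too, $\Lambda(x,v)$ is written as the supremum of the continuous difference quotients $\Psi(x,\exp(tv))/t$ (using convexity and $\Psi(x,e)=0$), and openness follows from the compactness of $S(\liep)$ exactly as in your tube argument. Your integral representation $\Psi(x,\exp(tv))=\int_0^t\sx\fun(\exp(sv)x),v\xs\,\mathrm ds$ correctly supplies the joint continuity that the paper leaves implicit via the citation.
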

\begin{proof}
The proof of \cite[Lemma 3.9]{bgs} works also for $\Lambda$ proving it is lower semicontinuos.  The openness of the stable points can be proved  as in \cite[Corollary 3.10]{bgs}.
\end{proof}
\section{Polystability and semi-stability}
\label{sec:polyst-semi-stab}
The aim of this section is to characterize polystability and semi-stability of $x \in \spaz$ in terms of the maximal weight
$\la_x$. %We apply these results in the setting introduced in \cite{heinzner-schwarz-Cartan,heinzner-schwarz-stoetzel,heinzner-schutzdeller}.
Throughout this section we assume that the Kempf-Ness function of $(\spaz,G,K)$ satisfies not only ($P1$)--($P5$) but also $(P6)$. Further, for semi-stability we also assume that $\spaz$ is compact. This will be enough for the case of measures on a compact manifold.

Let us denote by $\spaz^{ps}$ the set of polystable points, i.e. according to Definition \ref{stabilita}:
$$
\spaz^{ps}=\{x\in \spaz:\, G x \cap \fun^{-1}(0) \neq \vacuo\}.
$$
It follows by an easy argument that if $x\in \spaz$ is polystable then $G x\cap \fun^{-1}(0)$ contains exactly one $K$-orbit. %s does $G\cdot x\cap \fun^{-1}(0)$
%If $x\in \spaz$ is polystable, how many $K$-orbits does $G\cdot x\cap \fun^{-1}(0)$ have? The answer of this question is exactly one and it follows by an easy argument. As for the classical
Indeed, let $y\in G x$ be such that $\fun (y)=0$. We shall prove that $K y=G y \cap \fun^{-1}(0)$. Assume that $g y\in \fun^{-1}(0)$. Set
$g=k\exp(v)$. By the $K$-equivariance of $\fun$ it follows $\fun (\exp(v) y)=0$. As in the proof of Proposition \ref{compatible}, we get $\R v\in \mathfrak g_y$ and so $G y \cap \fun^{-1} (0)=K y$.
Hence we have proven the following result.
\begin{prop}
%Let $\spaz^{ps}=\{p\in \spaz:\, G\cdot p \cap \fun^{-1}(0) \neq \vacuo\}$.
The inclusion $\fun^{-1}(0) \hookrightarrow \spaz^{ps}$ induces a bijection
\[
\fun^{-1}(0)/K \lra \spaz^{ps} /G.
\]
\end{prop}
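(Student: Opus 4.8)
The plan is to exhibit the inclusion-induced map explicitly and check that it is a well-defined bijection, the real input being the orbit-intersection identity $Gx \cap \fun\meno(0) = Kx$ for $x \in \fun\meno(0)$ that has just been established. Concretely, I would consider
\[
\Phi : \fun\meno(0)/K \lra \spaz^{ps}/G, \qquad \Phi(Kx) := Gx,
\]
and verify the three standard properties.

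First I would check well-definedness. If $x \in \fun\meno(0)$ then $x \in Gx \cap \fun\meno(0) \neq \vacuo$, so $x$ is polystable and $Gx$ indeed defines a class in $\spaz^{ps}/G$. If moreover $Kx = Kx'$, then $x' = kx$ with $k \in K \subset G$, so $Gx = Gx'$; thus $\Phi$ is independent of the chosen representative. Surjectivity is then immediate from the definition of polystability: a class $Gx \in \spaz^{ps}/G$ contains, by definition, some $y \in Gx \cap \fun\meno(0)$, and $\Phi(Ky) = Gy = Gx$.

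For injectivity I would argue as follows. Suppose $x, x' \in \fun\meno(0)$ satisfy $\Phi(Kx) = \Phi(Kx')$, that is $Gx = Gx'$. Then $x' \in Gx \cap \fun\meno(0)$, and since $x$ is polystable the identity $Gx \cap \fun\meno(0) = Kx$ forces $x' \in Kx$, i.e. $Kx = Kx'$. This gives injectivity and finishes the proof.

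I expect no genuine obstacle at this stage: once the orbit-intersection identity is in hand, the bijection is a purely formal consequence. The actual content sits in that identity, whose proof reduces an arbitrary $g = k\exp(v)$ with $gx \in \fun\meno(0)$ to the one-parameter subgroup $\exp(\R v)$ by means of the $K$-equivariance of $\fun$ (Proposition \ref{equivarianza}) and the convexity argument of Proposition \ref{compatible}, forcing $v \in \lieg_x$; that step has already been carried out above.
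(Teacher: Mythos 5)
Your proof is correct and follows essentially the same route as the paper: the whole content is the orbit-intersection identity $Gy \cap \fun^{-1}(0) = Ky$ for $y \in \fun^{-1}(0)$, which the paper establishes exactly as you describe (Cartan decomposition $g=k\exp(v)$, $K$-equivariance of $\fun$, and the convexity argument of the compatibility proposition), and from which the bijection is the same formal consequence you spell out.
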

In this section we give a numerical criteria for an element $x\in \spaz$ to be a polystable point.
Let $x\in \spaz$. We define $Z(x)=\{p\in \tits:\, \la_x (p)=0 \}.$
We start with the following result.
\begin{prop}\label{polistabile-tits}
Let $x\in \fun^{-1} (0)$. Then $\la_x \geq 0$, $\lieg_x = \liek_x \oplus  \lieq \subset \liek\oplus\liep$ is compatible
and $Z(x) = \e(S(\lieq))=\partial_\infty G_x /K_x$.
  \end{prop}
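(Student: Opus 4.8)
The plan is to use that $\fun(x)=0$ makes $o:=eK$ a critical point of the geodesically convex function $\psi_x$, and then to extract all three assertions from the behaviour of $\psi_x$ along the geodesic rays issuing from $o$. First I would record the two structural inputs. By Proposition \ref{critical-point} the hypothesis $\fun(x)=0$ is equivalent to $o$ being a critical point of $\psi_x$, i.e. $(d\psi_x)_o=0$. By Proposition \ref{compatible} it also gives that $G_x$ is compatible, so by the description of compatible subgroups recalled in Section \ref{titssection} one has $\lieg_x=\liek_x\oplus\lieq$ with $\liek_x=\lieg_x\cap\liek$ and $\lieq:=\lieg_x\cap\liep$; this is the compatibility assertion. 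Now fix $v\in S(\liep)$, write the corresponding ray $\gamma^v(t)=\exp(tv)K$ and set $\phi_v(t):=\psi_x(\gamma^v(t))$. By Lemma \ref{convstab} each $\phi_v$ is convex, and $\phi_v'(0)=(d\psi_x)_o(\dga^v(0))=0$. Since $\phi_v'$ is then nondecreasing, $\la_x(\e(v))=\lim_{t\to+\infty}\phi_v'(t)\geq\phi_v'(0)=0$, which proves $\la_x\geq 0$.

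Next I would identify $Z(x)$. If $v\in S(\lieq)$ then $\exp(\R v)\subset G_x$, so the $g=e$ case of Lemma \ref{convstab} makes $\phi_v$ affine; together with $\phi_v'(0)=0$ this forces $\phi_v$ constant and hence $\la_x(\e(v))=0$, giving $\e(S(\lieq))\subseteq Z(x)$. Conversely, let $\e(v)\in Z(x)$, so $\lim_{t\to+\infty}\phi_v'(t)=0$. If $\phi_v$ were strictly convex, then $\phi_v'$ would be strictly increasing with $\phi_v'(0)=0$, giving $\phi_v'(t)>0$ for $t>0$ and therefore $\lim_{t\to+\infty}\phi_v'(t)>0$, a contradiction. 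By the strict-convex/affine dichotomy of Lemma \ref{convstab} the function $\phi_v$ is thus affine, whence $\exp(\R v)\subset G_x$ and $v\in\lieq$. This yields $Z(x)=\e(S(\lieq))$.

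Finally, for the last equality I would use that, $G_x$ being compatible, $X_x:=G_x/K_x$ embeds totally geodesically in $X$ with $\partial_\infty X_x\subset\tits$ (Section \ref{titssection}). The geodesic rays of $X_x$ through the base point are exactly $t\mapsto\exp(tv)K_x$ with $v\in S(\lieq)$, and under the totally geodesic inclusion these coincide with the rays $\gamma^v$ in $X$; hence the boundary point of $X_x$ determined by $v$ is $\e(v)$, so $\partial_\infty(G_x/K_x)=\e(S(\lieq))=Z(x)$. The step I expect to be the main obstacle is the reverse inclusion $Z(x)\subseteq\e(S(\lieq))$, since it requires converting the analytic vanishing $\la_x(\e(v))=0$ into the algebraic statement $\exp(\R v)\subset G_x$; the crucial tool there is precisely the dichotomy of Lemma \ref{convstab} combined with the monotonicity of $\phi_v'$ to exclude strict convexity.
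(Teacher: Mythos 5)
Your proposal is correct and follows essentially the same route as the paper: compatibility of $G_x$ from Proposition \ref{compatible}, nonnegativity of $\la_x$ from convexity plus the vanishing of the derivative at the critical point $o$, and the identification $Z(x)=\e(S(\lieq))$ by converting $\lim_{t\to+\infty}\phi_v'(t)=0$ together with $\phi_v'(0)=0$ into the affine case of the dichotomy and hence, via $(P3)$, into $\exp(\R v)\subset G_x$. The only cosmetic differences are that you work with $\psi_x$ along rays in $X$ and invoke Lemma \ref{convstab} where the paper works with $f(t)=\Psi(x,\exp(tv))$, Remark \ref{stabilizzatore-lineare} and $(P3)$ directly; these are equivalent.
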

  \begin{proof}
By Proposition \ref{compatible} the stabilizer $G_x$ is compatible. Hence $\lieg_x=\liek_x\oplus \lieq$ with $\liek_x \subset \liek$ and $\lieq\subset \liep$.
Further, observe that for $\xi \in \liep$, since $\Psi(x, \exp(t\xi))$ is a convex function, we get:
\[
\Lambda(x,\xi)\geq \desudtzero \Psi(y, \exp( t\xi ) )= \sx \fun
    (y), \xi \xs=0.
\]
To conclude, we shall prove that $v\in S(\lieq)$ if and only if $\la_x (\e(-v))=0$.
Let first $\x \in S(\lieq)$. By Remark \ref{stabilizzatore-lineare} the function:
\[
f:\R \lra \R, \qquad t \mapsto \Psi(x,\exp(tv)),
\]
is linear. Since $\la_x \geq 0$,  we have $\lim_{t\rightarrow +\infty} f'(t)=a   \geq 0$ and $\lim_{t\rightarrow +\infty} f'(-t)=-a\geq 0$. Thus, $f(t)=\Psi(x,\exp(tv))=0$ and condition ($P3$) implies
$\la_x (\e (-v))=0$.

Vice-versa, assume $\la_x ( \e (-v))=0$ and consider again the function $f(t)=\Psi(x,\exp (tv))$. {Observe that }$f$ is convex and by assumptions  $\lim_{t\rightarrow + \infty} f'(t)= 0$ and $f'(0)=\desudtzero \Psi(x,\exp(tv))=\langle \fun (x),v \rangle=0$. Hence $f'(t)=0$ for $t\geq 0$. Therefore
$f''(t)=\frac{\mathrm{d^2}}{\mathrm{dt}^2 }\vert_{t=0} \Psi(x,\exp(tv))=0$.  By property $(P3)$ we get $\R v \in \lieq$ concluding the proof.
\end{proof}
Note that the inclusion $G_x/K_x \hookrightarrow X$ is totally geodesic. We claim that a vice-versa  of Proposition \ref{polistabile-tits} holds as well. We start with the following Lemma.
\begin{lemma}\label{lemma-preliminare}
Let $x\in \spaz$. Assume $\la_x\geq 0$ and $Z(x)=\partial_\infty X'$, where $X'$ is a totally geodesic submanifold of $X$. Then, there exists $g\in G$ such that setting $y=g x$ we have $Z(y)=\partial_\infty G'/K'$, where $G'$ is compatible, $G' \cap K=K'$ and $G'\subset G_y$.
\end{lemma}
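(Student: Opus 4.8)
The plan is to move $x$ by a suitable $g\in G$ so that the totally geodesic submanifold carrying $Z(x)$ gets translated to one passing through the base point $o=eK$, where its tangent space is a genuine linear subspace of $\liep$; the symmetry of such a subspace, together with the convexity coming from $(P3)$, then forces the associated subgroup to fix the translated point.

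First I would invoke transitivity of the $G$-action on $X$: choose any $p_0\in X'$ and $g\in G$ with $gp_0=o$, and set $y:=gx$ and $X'':=gX'$, a totally geodesic submanifold with $o\in X''$. By the equivariance \eqref{maximal-weight-Ginvariant} of the maximal weight, $\la_y(p)=\la_x(g^{-1}\cdot p)$, so $\la_y\geq 0$ and
\[
Z(y)=g\cdot Z(x)=g\cdot\partial_\infty X'=\partial_\infty X''.
\]
Since $X''$ is totally geodesic and contains $o$, its tangent space $\lies:=T_oX''\subset\liep$ is a Lie triple system; the associated $\theta$-stable subalgebra $\lieg'=\lies\oplus[\lies,\lies]$ (with $[\lies,\lies]\subset\liek$) determines a connected compatible subgroup $G'$ with $K'=G'\cap K$ and $G'/K'=X''$, so that $Z(y)=\partial_\infty X''=\e(S(\lies))$ (see Section \ref{titssection} and \cite{eberlein,helgason}).

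The decisive point is that $\lies$ is now a linear subspace, hence $\lies=-\lies$. Thus for every $\xi\in\lies\setminus\{0\}$ both $\e(\pm\xi/\|\xi\|)$ lie in $Z(y)=\e(S(\lies))$, and \eqref{la-exp} together with the homogeneity of $\La$ gives $\La(y,\xi)=\La(y,-\xi)=0$. The function $f(t):=\Psi(y,\exp(t\xi))$ is convex by $(P3)$, so $f'$ is non-decreasing with $\lim_{t\to+\infty}f'(t)=\La(y,\xi)=0$ and $\lim_{t\to-\infty}f'(t)=-\La(y,-\xi)=0$; therefore $f'\equiv 0$, and in particular $\frac{\mathrm{d^2}}{\mathrm{dt}^2}\big\vert_{t=0}\Psi(y,\exp(t\xi))=0$. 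By $(P3)$ this yields $\exp(\R\xi)\subset G_y$. Since this holds for all $\xi\in\lies$ and $G_y$ is a subgroup, the connected subgroup generated by $\exp(\lies)$—namely $G'$—is contained in $G_y$. As $G'$ is compatible, $G'\cap K=K'$, and $\partial_\infty G'/K'=\e(S(\lies))=Z(y)$, the element $g$ and the subgroup $G'$ have all the required properties.

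The main obstacle is conceptual rather than computational: one must recognize that only after translating $X'$ to pass through $o$ does the set $\e^{-1}(Z(y))=S(\lies)$ become symmetric. This symmetry $\lies=-\lies$ is precisely what allows one to use both $\La(y,\xi)=0$ and $\La(y,-\xi)=0$ and thereby conclude, via convexity and $(P3)$, that $\Psi(y,\exp(\cdot\,\xi))$ is constant. Applied directly to $x$ the argument breaks down, because $\e^{-1}(Z(x))$ need not be symmetric: the induced $G$-action on $S(\liep)$ does not respect its linear structure, as already observed in Section \ref{titssection}.
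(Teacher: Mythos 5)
Your proof is correct and follows essentially the same route as the paper's: translate $X'$ to pass through the base point $o$, identify its tangent space as a Lie triple system $\lies\subset\liep$, use the symmetry $\lies=-\lies$ together with $\la_y(\e(v))=\la_y(\e(-v))=0$ and convexity from $(P3)$ to force $\exp(\R\xi)\subset G_y$ for all $\xi\in\lies$, and then pass to the compatible subgroup $G'$ with Lie algebra $\lies\oplus[\lies,\lies]$. The only difference is organizational (you translate first, the paper treats the case $o\in X'$ first and then conjugates), and your explicit emphasis on why the symmetry of $S(\lies)$ is the decisive point is a helpful clarification of what the paper does implicitly.
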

\begin{proof}
Assume first $o=[K]\in X'$. We shall prove that the statement holds for $g=e$. Since $X'$ is a totally geodesic submanifold of $X$ there exists a subspace $\lieq \subset \liep${, called  \emph{Lie triple system} of $\liep$, } such that $X'=\exp(\lieq)$ and $[[\lieq,\lieq ],\lieq]\subset \lieq$ (see e.g. \cite{helgason}).
We claim $\lieq\subset \lieg_x$. Indeed, let $v\in S(\lieq))$. Since $\la_x (e(-v))=\la_x(e(v))=0$, the convex function $f(t)=\Psi(x,\exp(tv))$ satisfies $\lim_{t\to \pm \infty} f'(t)=0$. Hence $f'$ is constant and so
\[
f''(0)=\frac{d^2}{dt} \bigg \vert_{t=0} \Psi(x,\exp(tv))=0.
\]
By properties $(P3)$ we have $v\in \lieg_x$.
Let
$\lieg'=[\lieq,\lieq]\oplus \lieq$. Observe that  $\lieg'$ is a subalgebra of $\lieg$  due to the fact that $\lieq$ is a Lie triple system of $\liep$ (see e.g. \cite{helgason}{)}. Let $G'$ denote the connected subgroup of $G$ with lie algebra $\lieg'$. Hence $G'=(G'\cap K)\exp (\lieq)$ and $G'\subset G_x$. Therefore $G'=\overline{G''}=(\overline{G''} \cap K)\exp(\lieq)$ is compatible, $G'\subset G_x$ and if we denote by $K'=G'\cap K$ we have $\partial_\infty X'=\partial_\infty G'/K'$.

%\sout{For a general case, there exists $g\in G$ such that $X''=gX'$ and $X'$ is a totally geodesic submanifold passing from $o$. Hence $X'=\exp(\lieq)=G'/K'$ where $\lieq$ is a Lie triple system of $\liep$. By (\ref{maximal-weight-Ginvariant}) it follows $Z(gx)=g(Z(x))$, and so}
%\[
%Z(g x)=\partial_\infty X'=\partial_\infty G'/K'.
%\]
In general, for any $g\in G$ we can consider the totally geodesic submanifold defined by $X''=gX'$. Since by (\ref{maximal-weight-Ginvariant}) it follows $Z(gx)=g(Z(x))$, we have:
$$
Z(g x)=g\partial_\infty X'=\partial_\infty X'',
$$
and we are done.

\end{proof}
\begin{teo}\label{polystable-condition}
An element $x\in \spaz$ is a polystable point if and only if $\la_x \geq 0$ and $Z(x) = \partial_\infty X'$ for some totally geodesic submanifold $X'\subset X=G/K$.
\end{teo}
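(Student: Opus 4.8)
The plan is to establish the two implications separately, relying on Proposition~\ref{polistabile-tits} for the ``only if'' direction and on Lemma~\ref{lemma-preliminare} together with a convexity/minimization argument for the ``if'' direction. Assume first that $x$ is polystable, and pick $g\in G$ with $y:=gx\in\fun\meno(0)$. Proposition~\ref{polistabile-tits} gives $\la_y\geq 0$, the compatibility of $G_y$, and $Z(y)=\partial_\infty(G_y o)$, the Tits boundary of the totally geodesic submanifold $G_y o\subset X$. I would transport these facts to $x=g\meno y$ through the equivariance \eqref{maximal-weight-Ginvariant}, which reads $\la_x(p)=\la_y(g\cd p)$. Since $g$ acts as a bijection of $\tits$ this yields at once $\la_x\geq 0$ and $Z(x)=g\meno\cd Z(y)$; and because $L_{g\meno}$ is an isometry of $X$ carrying $G_y o$ to the totally geodesic submanifold $X':=g\meno(G_y o)$ and inducing $p\mapsto g\meno\cd p$ on the boundary, we conclude $Z(x)=\partial_\infty X'$.

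For the converse, suppose $\la_x\geq 0$ and $Z(x)=\partial_\infty X'$ with $X'$ totally geodesic. Lemma~\ref{lemma-preliminare} produces $g\in G$ such that $y:=gx$ satisfies $Z(y)=\partial_\infty(G'/K')$, where $G'=K'\exp(\lieq)$ is compatible, $K'=G'\cap K$, $\lieq=\lieg'\cap\liep$, and $G'\subset G_y$; moreover $\la_y\geq 0$ by \eqref{maximal-weight-Ginvariant}, so $\la_y$ vanishes precisely on $Z(y)=\e(S(\lieq))$ and is positive elsewhere. By Proposition~\ref{critical-point} it is enough to produce a critical point of the convex function $\psi_y:X\to\R$, for then the gradient map vanishes at the corresponding point of the orbit $Gy=Gx$. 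I will obtain this critical point as a global minimum, and two ingredients are needed. First, $\psi_y$ is $G'$-invariant: arguing as in Proposition~\ref{polistabile-tits}, the facts that $\la_y\geq 0$ and $\la_y(\e(\pm v))=0$ for $v\in S(\lieq)$ force $\Psi(y,\exp(tv))\equiv 0$, whence $\Psi(y,g')=0$ for all $g'\in G'$ by $K$-invariance; feeding $g'\in G'\subset G_y$ into the cocycle identity \eqref{cociclo-3} gives $L_{g'}^*\psi_y=\psi_{g'\meno y}+\psi_y(g'K)=\psi_y$, and in particular $\psi_y\equiv 0$ on $X'':=G' o$. Second, writing $\liep=\lieq\oplus\lieq^\perp$, for every nonzero $\xi\in\lieq^\perp$ the associated boundary point lies off $Z(y)=\e(S(\lieq))$, so $\La(y,\xi)>0$; by Lemma~\ref{linear-proper} the function $\Psi(y,\exp(\cd))$ is therefore linearly proper on $\lieq^\perp$.

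The minimization step combines these as follows. Let $\pi:X\to X''$ be the nearest-point projection onto the complete totally geodesic submanifold $X''$; it is distance non-increasing and $G'$-equivariant, and the geodesic from any point to its projection meets $X''$ orthogonally. Given a minimizing sequence $(p_n)$ for $\psi_y$, I would choose $g'_n\in G'$ with $g'_n\pi(p_n)=o$ and replace $p_n$ by $g'_n p_n$: by $G'$-invariance this leaves $\psi_y(p_n)$ unchanged, while now $\pi(p_n)=o$, so $p_n=\exp(s_n u_n)K$ with $u_n\in S(\lieq^\perp)$ and $s_n=d(o,p_n)$. Then $\psi_y(p_n)=\Psi(y,\exp(-s_n u_n))$, and linear properness on $\lieq^\perp$ yields $s_n^2\leq C_1\psi_y(p_n)+C_2$. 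As $\psi_y(p_n)$ is bounded above, $(s_n)$ is bounded, hence $(p_n)$ stays in a compact ball, and a convergent subsequence produces a minimum of $\psi_y$, i.e. a critical point; thus $x$ is polystable by Proposition~\ref{critical-point}.

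I expect the main obstacle to be this last step. Since $\la_y$ is only nonnegative and vanishes on $Z(y)$, the function $\psi_y$ is convex but neither proper nor bounded below a priori, being constant along the flat submanifold $X''$, so the existence of a minimum is exactly what must be forced. The delicate point is to combine the two ingredients correctly---using $G'$-invariance to quotient out the flat directions of $X''$ and the transverse linear properness coming from Lemma~\ref{linear-proper} to control the normal directions---through the nearest-point projection, for which the orthogonality of the normal geodesics to $X''$ and the $G'$-equivariance of $\pi$ are essential.
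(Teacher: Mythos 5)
Your argument is correct and follows the same overall strategy as the paper's proof: the forward implication via Proposition \ref{polistabile-tits} and the equivariance \eqref{maximal-weight-Ginvariant}, and the converse by reducing (via Lemma \ref{lemma-preliminare}) to the case $Z(y)=\partial_\infty G'/K'$ with $G'=K'\exp(\lieq)\subset G_y$, showing that $\Psi(y,\cdot)$ vanishes on $G'$, and using Lemma \ref{linear-proper} on $\lieq^\perp$ to force the existence of a minimum of $\psi_y$, hence a critical point and polystability by Proposition \ref{critical-point}. The one genuine difference is the decomposition step. The paper invokes the Mostow decomposition $g=k\exp(\theta)h$ with $k\in K$, $\theta\in\lieq^\perp$, $h\in G'$ (cited from Heinzner--Schwarz) to write $\Psi(y,g)=\Psi(y,\exp(\theta))$ outright, so that the exhaustion property on $\lieq^\perp$ immediately yields a global minimum of $\Psi(y,\cdot)$. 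You instead use the nearest-point projection $\pi:X\to X''=G'o$ together with the $G'$-invariance of $\psi_y$ to move a minimizing sequence into $\exp(\lieq^\perp)K$. These are two faces of the same fact: your projection argument is in effect a proof of the geometric content of the decomposition $G=K\exp(\lieq^\perp)G'$, so your route is self-contained modulo standard Hadamard-manifold facts (existence, $1$-Lipschitz property, isometry-equivariance and orthogonality of the projection onto a complete totally geodesic submanifold), at the price of having to verify those facts, while the paper's is shorter by citation. One small point worth making explicit: your inequality $s_n^2\le C_1\psi_y(p_n)+C_2$ also rules out $\inf\psi_y=-\infty$, which is needed before one can speak of the limit point realizing the infimum; as written this is implicit. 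Otherwise the proof is complete.
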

\begin{proof}
One direction is proved in Proposition \ref{polistabile-tits}. Assume $\la_x \geq 0$ and $Z(x) = \partial_\infty X'$ for some totally geodesic submanifold $X'\subset X=G/K$.
By the above lemma and property \eqref{maximal-weight-Ginvariant}, we may assume $Z(x)=\partial_\infty G'/K'$ where $G'=K'\exp(\lieq)\subset G_x$, $\lieg'=\liek'\oplus \lieq$ with $\liek' \subset \liek$ and $\lieq\subset \liep$, $G'=K'\exp (\lieq)$ and
$Z(x)=\e (S(\lieq))$.
Write $\liep=\lieq \oplus \lieq^\perp$. By a Mostow decomposition, see \cite[Th. 9.3 p. 211]{heinzner-schwarz-Cartan}, any $g\in G$ can be written as $g=k\exp(\theta)h$, where $k\in K$, $h\in G'$ and $\theta \in \lieq^\perp$. Therefore by the $K$-invariants and the cocycle condition of $\Psi$, keeping in mind that $G'\subset G_x$, we get:
\[
\Psi(x,g)=\Psi(x,k\exp(\theta)h)=\Psi(x,\exp(\theta))+\Psi(x,h).
\]
We claim that $\Psi(x,h)=0$. Indeed, $h=k \exp(v)$ with $k\in K'$ and $v\in \lieq$. Hence  $\Psi(x,h)=\Psi(x,\exp(v))$. As in the above lemma, we consider the function
$
f(t)=\Psi(x,\exp(tv))
$
which is linear due to Remark \ref{stabilizzatore-lineare}. Since $\la_x(e(\pm v))=0$, we have $\lim_{t\mapsto \pm \infty} f'(t)=0$, which implies $f\equiv 0$ and thus $\Psi(x,h)=0$. Hence
$
\Psi(x,g)=\Psi(x,\exp(\theta)).
$
Since $\Lambda(x,\cdot) >0$ on $\lieq^\perp\setminus \{0\}$, by Lemma \ref{linear-proper}  there exist positive constants $C_1$ and $C_2$ such that
\[
|| \theta ||^2 \leq C_1\Psi(x,\exp(\theta))+C_2.
\]
This means $\Psi(x,\cdot)_{|_{\lieq^\perp}}$ is an exhaustion and so it has a minimum. Since $\Psi(x,g)=\Psi(x, \exp(\theta))$ with $\theta\in \lieq^\perp$, this means that $\Psi(x,\cdot)$ has a minimum and thus a critical point. By Proposition \ref{critical-point} the point $x$ is polystable.
\end{proof}
\begin{cor}\label{cor-polystable}
Let $x\in \spaz$ be a polystable point. Then there exist $g\in G$ and an abelian subalgebra $\lia \subset \liep$ such that $\lia\subset \lieg_{gx}$ and $gx$ is $G^{\lia}$ polystable, where $G^{\lia}=\{g\in G:\, \mathrm{Ag}(g)(\xi)=\xi $ for all $\xi \in \lia\}$. Moreover
if we denote by $G_{ss}'$ the semisimple part of $G^\lia$, then $gx$ is stable with respect to $G_{ss}'$.
\end{cor}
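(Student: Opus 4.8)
The plan is to use polystability to move $x$ into $\fun^{-1}(0)$ and then to build $\lia$ as a maximal abelian subspace of the $\liep$-part of the stabilizer. By polystability, choose $g\in G$ with $\fun(gx)=0$ and set $y=gx$. By Proposition \ref{polistabile-tits}, $\lieg_y=\liek_y\oplus\lieq$ is compatible with $\lieq=\lieg_y\cap\liep$, and moreover $\la_y\geq 0$ on $\tits$ and $Z(y)=\e(S(\lieq))$. I would then take $\lia\subset\lieq$ to be a maximal abelian subspace; since $\lia\subset\liep$ and $\lia\subset\lieg_y=\lieg_{gx}$, this is the subalgebra demanded by the statement. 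Let $G^\lia$ be its centralizer, with Lie algebra $\lieg^\lia=\{u\in\lieg:[u,\xi]=0\ \forall\,\xi\in\lia\}$. Because $\lia\subset\liep$, the group $G^\lia$ is $\theta$-stable, hence compatible, and $\lieg^\lia=\liek^\lia\oplus\liep^\lia$ with $\liek^\lia=\liek\cap\lieg^\lia$, $\liep^\lia=\liep\cap\lieg^\lia$.

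First I would prove that $y$ is $G^\lia$-polystable by checking the hypotheses of Theorem \ref{polystable-condition} for the compatible subgroup $G^\lia$. Since the Kempf-Ness function and the maximal weight for $G^\lia$ are the restrictions of those for $G$, the relevant maximal weight is $\la_y$ restricted to $\partial_\infty(G^\lia/K^\lia)=\e(S(\liep^\lia))$, which is $\geq 0$ because $\la_y\geq 0$ on all of $\tits$. Its zero set is $Z(y)\cap\e(S(\liep^\lia))=\e(S(\lieq\cap\liep^\lia))$. The structural point is that maximality of $\lia$ in $\lieq$ forces $\lieq\cap\liep^\lia=\lia$: any $u\in\lieq$ with $[u,\lia]=0$ spans together with $\lia$ an abelian subspace of $\lieq$, so $u\in\lia$. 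Hence the zero set equals $\e(S(\lia))=\partial_\infty(\exp(\lia)\cdot o)$, the boundary of the flat $\exp(\lia)\cdot o$, which is totally geodesic in $G^\lia/K^\lia$; Theorem \ref{polystable-condition} then yields that $y$ is $G^\lia$-polystable.

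For the final assertion I would apply Theorem \ref{stabile} to the semisimple part $G_{ss}'$, whose Lie algebra is $\lies'=[\lieg^\lia,\lieg^\lia]$ with $\liep$-part $\lies=\liep\cap\lies'$. It suffices to show $\La(y,\xi)>0$ for every $\xi\in\lies\setminus\{0\}$. By \eqref{la-exp}, $\La(y,\xi)=\la_y(\e(-\xi))$, which vanishes exactly when $-\xi\in\lieq$, so I only need $\lies\cap\lieq=\{0\}$. Here the reductive splitting $\lieg^\lia=\mathfrak z(\lieg^\lia)\oplus\lies'$ is decisive: since $\lia\subset\mathfrak z(\lieg^\lia)$, the inclusion $\lies\cap\lieq\subset\liep^\lia\cap\lieq=\lia$ combines with $\mathfrak z(\lieg^\lia)\cap\lies'=\{0\}$ to give $\lies\cap\lieq\subset\lia\cap\lies'=\{0\}$. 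Consequently $\La^{G_{ss}'}(y,\xi)=\La(y,\xi)>0$ on $\lies\setminus\{0\}$, and Theorem \ref{stabile} (applied to the compatible group $G_{ss}'$) shows that $y=gx$ is stable with respect to $G_{ss}'$.

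The main obstacle I anticipate is structural rather than analytic: establishing the two intersection identities $\lieq\cap\liep^\lia=\lia$ and $\lies\cap\lieq=\{0\}$, which rest on $\lia$ being self-centralizing in $\lieq$ (maximal abelian) and on $\lia$ lying in the center of $\lieg^\lia$. Secondary points needing care are that $G^\lia$ and $G_{ss}'$ are genuinely compatible, so that the restricted $\Psi$ is again a Kempf-Ness function and Theorems \ref{polystable-condition} and \ref{stabile} apply, and that restriction of the globally Lipschitz, geodesically convex $\psi_y$ to the totally geodesic $G^\lia/K^\lia$ preserves $(P6)$, so that the maximal weight for the subgroup is indeed the restriction of $\la_y$ as used above.
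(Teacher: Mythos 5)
Your proof is correct, and its skeleton is exactly the paper's: pass to $y=gx$ with $\fun(y)=0$, take $\lia$ maximal abelian in the $\liep$-part $\lieq$ of $\lieg_y$, and exploit that maximality forces $\lieq\cap\liep^\lia=\lia$ and hence $\lies\cap\lieq=\{0\}$. Where you diverge is in the machinery used to verify the two sub-claims, and in both cases the paper's route is shorter. For $G^\lia$-polystability you invoke the numerical criterion of Theorem \ref{polystable-condition}, which obliges you to compute the zero set of the restricted maximal weight and recognize it as the boundary at infinity of the flat $\exp(\lia)\cdot o$; the paper instead observes via \eqref{restrizione} that the gradient map of $(G^\lia,K^\lia)$ at $y$ is $\fun(y)\restr{\liep^\lia}=0$, so $y$ itself lies in the zero level set and is $G^\lia$-polystable directly from Definition \ref{stabilita} --- no appeal to $(P6)$ or to the structure of $Z(y)$ is needed for that step. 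For $G_{ss}'$-stability you go through Theorem \ref{stabile} and the positivity of $\La(y,\cdot)$ on $\lies\setminus\{0\}$; the paper again gets $G_{ss}'$-polystability for free from the vanishing of the restricted gradient map and then only checks $(\lieg_{ss}')_y\cap\liep=\{0\}$, concluding from compatibility of the stabilizer that $(\lieg_{ss}')_y\subset\liek$. Since $\La(y,\xi)=0$ precisely when $\xi\in\lieg_y\cap\liep$ (Proposition \ref{polistabile-tits}), this is the same structural fact you prove, just packaged differently. A small merit of your version is that it makes explicit the step $\lia\cap\lies'=\{0\}$ (centre versus semisimple part of $\lieg^\lia$), which the paper's phrase ``since $v\notin\lia$ \dots we get a contradiction'' leaves implicit. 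Both arguments are sound.
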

\begin{proof}
Let $x\in \spaz$ be a polystable point and let $g\in G$ be such that $\fun (g x)=0$. Set $y=gx$. By Proposition  \ref{polistabile-tits}, $G_{y}$ is compatible, and thus $\lieg_y=\liek_y \oplus \liep_y\subset \liek\oplus \liep$, and $Z(y)=\e(S(\liep_y))$. Let $\lia$ be a maximal abelian subalgebra  of $\liep_{y}$ and let $G^{\lia}$ be the centralizer of $\lia$ in $G$. $G^\lia$ is a compatible subgroup of $G$ (see \cite{knapp-beyond}) and by \eqref{restrizione} it follows $\fun' (y)=0$  and so $y$ is polystable with respect to $G^{\lia}$.
Let $G_{ss}'$ be the semisimple part of $G^{\lia}$. By \ref{restrizione} it follows that $y$ is $G_{ss}'$ polystable and so $(\lieg_{ss}')_y$ is compatible. We claim $(\lieg_{ss}')_y \cap \liep=\{0\}$. Indeed, if $v\in (\lieg_{ss}')_y\cap \liep$, then $v\in \liep_y$  and $[v,\lia]=0$. Since $v\notin \lia$ and $\lia$ is a maximal abelian subalgebra of $\liep_y$ we get a contradiction. Since $(\lieg_{ss}')_y$ is compatible it follows $(\lieg_{ss}')_y \subset \liek$ and so $(G_{ss}')_y$ is compact. Therefore $y$ is $G_{ss}'$ stable concluding the proof.
\end{proof}

The following theorem, in analogy with \cite[Th. 4.17]{bgs}, gives a numerical criteria for semi-stable points in terms of maximal weights. The proof is the same of the proof of \cite[Theorem 4.3]{teleman-symplectic-stability} and thus it follows by \cite[Lemma 3.4]{kapovich-leeb-millson-convex-JDG} due to Kapovich, Leeb and Millson.

%\sout{
%Next, we give a numerical criteria for semistable points in terms of maximal weights. The proof of this result is an analogue of the proof of Theorem 4.17 in \cite{bgs} and thus it uses ideas from the proof of Theorem 4.3 in \cite{teleman-symplectic-stability} and an important lemma due to Kapovich, Leeb and Millson}\cite[Lemma 3.4]{kapovich-leeb-millson-convex-JDG}.
%
\begin{teo}\label{semi-stable-abstract}
  If $\spaz $ is compact, then a point $x \in \spaz$ is semi-stable if
  and only if $\la_x \geq 0$.
\end{teo}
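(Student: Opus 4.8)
The statement is an equivalence, and my plan is to prove each implication separately, the device being to package the maximal weight into the single quantity
\[
m(x):=\min_{p\in\tits}\la_x(p),
\]
which exists because $\la_x$ is lower semicontinuous and $\tits$ is compact in the sphere topology. The goal becomes to show that $x$ is semi-stable if and only if $m(x)\geq 0$. The forcing input for the non-trivial implication is the convexity lemma of Kapovich, Leeb and Millson, and the compactness of $\spaz$ is essential throughout.

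For the implication semi-stable $\Rightarrow \la_x\geq 0$, I would start from a point $y\in\overline{Gx}\cap\fun^{-1}(0)$; by Proposition \ref{polistabile-tits} we have $\la_y\geq 0$, i.e. $m(y)\geq 0$. First I would observe that $m$ is $G$-invariant: since $G$ acts on $\tits$ by bijections and \eqref{maximal-weight-Ginvariant} gives $\la_{gx}(p)=\la_x(g^{-1}\cdot p)$, the minimum over $p\in\tits$ is unchanged, so $m(gx)=m(x)$ for every $g\in G$. Next, since $\la$ is jointly lower semicontinuous on $\spaz\times\tits$ and $\tits$ is compact, a routine argument (pass to a subsequence of minimizers of $\la_{x_n}$) shows that $m$ is lower semicontinuous on $\spaz$. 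Choosing $g_n\in G$ with $g_nx\to y$, lower semicontinuity then yields $m(y)\leq\liminf_n m(g_nx)=m(x)$, whence $m(x)\geq m(y)\geq 0$, that is $\la_x\geq 0$.

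For the converse, $\la_x\geq 0\Rightarrow$ semi-stable, which is the main point: by Lemma \ref{convstab} the function $\psi_x$ is geodesically convex, and by hypothesis $(\psi_x)_\infty=\la_x\geq 0$ on $\tits$. Here I would invoke the Kapovich--Leeb--Millson convexity lemma \cite[Lemma 3.4]{kapovich-leeb-millson-convex-JDG}, exactly as in \cite[Theorem 4.3]{teleman-symplectic-stability}: a convex function with non-negative slope at infinity in every direction admits a minimizing sequence $p_n=g_nK\in X$ along which $\|\grad\psi_x(p_n)\|\to 0$. Using the cocycle identity \eqref{cociclo-3}, namely $L_{g_n}^*\psi_x=\psi_{g_n^{-1}x}+\psi_x(g_nK)$, together with the fact that $L_{g_n}$ is an isometry of $X$, one gets $\|\grad\psi_x(g_nK)\|=\|\grad\psi_{g_n^{-1}x}(o)\|=\|\fun(g_n^{-1}x)\|$, the last equality coming from the defining relation $\sx\fun(z),\xi\xs=-(d\psi_z)_o(\dga^\xi(0))$ and the identification $\liep\cong\liep^*$. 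Hence $\|\fun(g_n^{-1}x)\|\to 0$.

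It is at this last step that compactness of $\spaz$ enters decisively: the points $g_n^{-1}x$ all lie in $Gx$, so after passing to a subsequence I obtain $g_n^{-1}x\to y\in\spaz$, and thus $y\in\overline{Gx}$; continuity of $\fun$ (condition $(P5)$) then gives $\fun(y)=\lim_n\fun(g_n^{-1}x)=0$, so $y\in\overline{Gx}\cap\fun^{-1}(0)$ and $x$ is semi-stable. The genuine obstacle is the convexity lemma itself, because the infimum of $\psi_x$ need not be attained — indeed $\psi_x$ need not even be bounded below when $\la_x$ vanishes in some direction — so one cannot simply produce a true minimizer. The content of the Kapovich--Leeb--Millson lemma is precisely that the inequality $\la_x\geq 0$ still forces an almost-critical minimizing sequence, and compactness of $\spaz$ is what converts this into an honest zero of $\fun$ in the orbit closure.
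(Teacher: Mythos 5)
Your argument is correct and follows exactly the route the paper itself indicates: the paper gives no written proof of Theorem \ref{semi-stable-abstract}, deferring to Teleman's Theorem 4.3 and Lemma 3.4 of Kapovich--Leeb--Millson, and your proposal is precisely that combination spelled out (the easy direction via $G$-invariance and lower semicontinuity of the maximal weight, the hard direction via the convexity lemma producing $g_nK$ with $\|\grad\psi_x(g_nK)\|=\|\fun(g_n^{-1}x)\|\to 0$, then compactness of $\spaz$ and $(P5)$ to land in $\overline{Gx}\cap\fun^{-1}(0)$). The only cosmetic point is the phrase ``minimizing sequence'': when $\inf\psi_x=-\infty$ that wording is vacuous, and all that is needed --- and all that the Kapovich--Leeb--Millson argument actually delivers under $\la_x\geq 0$ --- is a sequence along which the gradient norm tends to zero, i.e.\ $\inf_X\|\grad\psi_x\|=0$, which is exactly what you use.
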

The following result is a
Hilbert-Mumford criterion for semi-stability. The proof is totally similar to Corollary \ref{Hilbert-Mumford}'s one.
\begin{cor}\label{Hilbert-Mumfords}
A point $x \in \spaz$ is $G$ semi-stable if and only if it is
$A$ semi-stable for any  abelian group $A=\exp (\lia)$, where $\lia$ is a subalgebra of $\lieg$ contained in $\liep$.
\end{cor}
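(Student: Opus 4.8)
The plan is to reduce the statement to the numerical criterion of Theorem \ref{semi-stable-abstract} and then argue exactly as in the proof of Corollary \ref{Hilbert-Mumford}, replacing the strict inequality governing stability by the weak inequality governing semi-stability. Since $\spaz$ is assumed compact and $\Psi$ satisfies $(P6)$, Theorem \ref{semi-stable-abstract} says that $x$ is $G$ semi-stable if and only if $\la_x \geq 0$ on $\tits$. Using the homeomorphism $\e : S(\liep) \ra \tits$ together with \eqref{la-exp}, this is equivalent to requiring $\La(x,\xi) \geq 0$ for every $\xi \in \liep \setminus \{0\}$: indeed $\La(x,-v) = \la_x(\e(v))$ for $v\in S(\liep)$, and $\La(x,\cdot)$ is positively homogeneous of degree one, so nonnegativity on the unit sphere is the same as nonnegativity on all of $\liep\setminus\{0\}$.

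Next I would fix an abelian subalgebra $\lia \subset \liep$ and set $A = \exp(\lia)$. As in the discussion preceding \eqref{restrizione}, $A$ is a compatible subgroup with $A\cap K = \{e\}$, and $\Psi\restr{\spaz \times A}$ is a Kempf-Ness function for $(\spaz, A, \{e\})$; moreover $(P6)$ passes to this restriction, since $\psi_x^{A} = \psi_x\restr{A}$ is the restriction of a globally Lipschitz function to the totally geodesic submanifold $A \hookrightarrow X$ and hence is itself globally Lipschitz. Because $\spaz$ is unchanged and therefore still compact, Theorem \ref{semi-stable-abstract} applies verbatim to the triple $(\spaz, A, \{e\})$ and shows that $x$ is $A$ semi-stable if and only if the restricted maximal weight is nonnegative, equivalently (again via \eqref{restrizione}) $\La^{A}(x,\xi) = \La(x,\xi) \geq 0$ for every $\xi \in \lia \setminus \{0\}$.

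With both characterizations in hand the two implications are immediate. Necessity is trivial: if $\La(x,\xi)\geq 0$ on all of $\liep \setminus \{0\}$, then in particular it holds on every $\lia \setminus \{0\}$, so $G$ semi-stability forces $A$ semi-stability for each such $A$. For sufficiency, given an arbitrary $\xi \in S(\liep)$ one takes $\lia = \R \xi$, a one-dimensional and hence abelian subalgebra contained in $\liep$; then $A$-semi-stability yields $\La(x,\xi) = \La^{A}(x,\xi) \geq 0$. As $\xi$ ranges over $S(\liep)$ this gives $\La(x,\cdot)\geq 0$ on all of $\liep\setminus\{0\}$, that is $\la_x \geq 0$, so $x$ is $G$ semi-stable.

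The only step requiring genuine care — and the precise analogue of the single nontrivial point in Corollary \ref{Hilbert-Mumford} — is the legitimacy of invoking Theorem \ref{semi-stable-abstract} for the subgroups $A$: one must verify that its two hypotheses survive restriction, namely that $\spaz$ remains compact (automatic, the space being unchanged) and that $(P6)$ holds for $(\spaz, A, \{e\})$, which is exactly what the Lipschitz-restriction remark above secures. Everything else is the degree-one homogeneity bookkeeping already employed in the stable case.
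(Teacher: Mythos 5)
Your proposal is correct and follows essentially the same route as the paper: reduce semi-stability to the numerical criterion $\La(x,\cdot)\geq 0$ via Theorem \ref{semi-stable-abstract}, note that necessity is just restriction, and get sufficiency by taking the one-dimensional subalgebras $\lia=\R\xi$. The extra care you take in verifying that Theorem \ref{semi-stable-abstract} applies to $(\spaz, A, \{e\})$ (compactness unchanged, $(P6)$ inherited by restriction to a totally geodesic submanifold) is a point the paper leaves implicit, but it is the same argument.
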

We conclude this section with the following {corollaries}.
\begin{cor}
Let $x\in \spaz$ be a semi-stable point. Then either $x$ is stable or  $\overline{G x}\,\cap\, \fun^{-1}(0)\subset \spaz^{ps}$.
\end{cor}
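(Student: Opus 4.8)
The plan is to separate the containment itself, which holds for trivial reasons, from the genuine dichotomy hidden in the ``either/or''. First I would observe that $\overline{Gx}\cap\fun^{-1}(0)\subset\spaz^{ps}$ needs no hypothesis on $x$ at all: if $y\in\fun^{-1}(0)$ then $y=e\cdot y\in Gy\cap\fun^{-1}(0)$, so $y$ is polystable by Definition \ref{stabilita}, whence $\fun^{-1}(0)\subset\spaz^{ps}$ and a fortiori $\overline{Gx}\cap\fun^{-1}(0)\subset\spaz^{ps}$. Since $x$ is semi-stable, $\overline{Gx}\cap\fun^{-1}(0)\neq\vacuo$, so this containment is non-vacuous. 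Thus the content of the statement is that the alternative is sharp: if $x$ fails to be stable, then every point of $\overline{Gx}\cap\fun^{-1}(0)$ is polystable \emph{but not stable}.

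To prove this sharper form I would argue by contraposition, transferring stability along the orbit by means of the openness of the stable locus. Suppose some $y\in\overline{Gx}\cap\fun^{-1}(0)$ is stable. By Corollary \ref{stable-open-abstract-setting} the set $\spaz^{s}$ of stable points is open in $\spaz$, so there is an open neighbourhood $U$ of $y$ with $U\subset\spaz^{s}$. Because $y\in\overline{Gx}$, every neighbourhood of $y$ meets the orbit $Gx$; in particular $U\cap Gx\neq\vacuo$, so there is $g\in G$ with $gx\in U$. Then $gx$ is stable, and since stability is a $G$-invariant property (one recalls $\lieg_{gx}=\Ad(g)\lieg_x$, so $\lieg_{gx}$ is conjugate to a subalgebra of $\liek$ iff $\lieg_x$ is), the point $x$ is stable as well. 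This is exactly the contrapositive of the claim: if $x$ is not stable, then no point of $\overline{Gx}\cap\fun^{-1}(0)$ is stable, so $\overline{Gx}\cap\fun^{-1}(0)\subset\spaz^{ps}\setminus\spaz^{s}$, and in particular the asserted containment holds.

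The only step requiring care is the openness of $\spaz^{s}$, which is precisely Corollary \ref{stable-open-abstract-setting}; granting it, the remainder is a short topological transfer and needs no convexity or maximal-weight estimate. I do not expect a serious obstacle here. The two points to keep in mind are, first, that one works throughout with the topology of $\spaz$ itself (not with any orbit topology), so that $y\in\overline{Gx}$ genuinely forces $U\cap Gx\neq\vacuo$; and second, that semi-stability of $x$ enters only to guarantee that $\overline{Gx}\cap\fun^{-1}(0)$ is nonempty, which is what makes the conclusion informative rather than vacuously true.
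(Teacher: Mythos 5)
Your core argument is correct and is essentially the paper's own proof: both rest on the openness of the set of stable points (Corollary \ref{stable-open-abstract-setting}), the fact that $Gx$ is dense in $\overline{Gx}$ so any open set meeting $\overline{Gx}$ meets $Gx$, and the $G$-invariance of stability. The only cosmetic difference is that the paper restricts the Kempf-Ness function to $\spaz'=\overline{Gx}$ and applies the openness corollary inside $\spaz'$, whereas you apply openness in $\spaz$ directly; both are legitimate, since $\Lambda$ and $\fun$ for $\spaz'$ are just the restrictions of those for $\spaz$, so the two notions of stability agree on $\spaz'$. Your preliminary observation is also well taken and sharpens the reading of the statement: since any $y\in\fun^{-1}(0)$ satisfies $y\in Gy\cap\fun^{-1}(0)$, the inclusion $\fun^{-1}(0)\subset\spaz^{ps}$ holds unconditionally by Definition \ref{stabilita}, so the displayed containment is true for trivial reasons and the real content --- which is exactly what the paper's proof establishes --- is the dichotomy you formulate: if $x$ is semi-stable but not stable, then no point of $\overline{Gx}\cap\fun^{-1}(0)$ is stable, i.e.\ these points are polystable but not stable. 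No gap.
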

\begin{proof}
Let $x\in \spaz$ be a semi-stable point which is not stable. Setting $\spaz'=\overline{G x}$, the
restriction of $\Psi $ to $G\times \spaz'$ is a Kempf-Ness function
for $(\spaz', G, K)$ and the functions $\Lambda$ and $\fun$ for
$(\spaz', G, K)$ are simply the restrictions of those for $\spaz$. By Corollary \ref{stable-open-abstract-setting} the set of stable points of $\spaz'$ is open. By definition the set of stable points  is $G$-invariant. Hence if a point $z\in\spaz'$
were stable, then $x$ would also be stable contradicting our assumption.
%}
\end{proof}
\begin{cor}
If $x\in \spaz$ is semistable then any $y\in \overline{G x}$ also is.
\end{cor}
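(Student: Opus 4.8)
By definition $y$ is semi-stable precisely when $\overline{Gy}\cap\fun^{-1}(0)\neq\vacuo$, so the plan is to manufacture a zero of $\fun$ inside $\overline{Gy}$. I would work throughout in the compact $G$-invariant subspace $\spaz':=\overline{Gx}$: as recalled before Corollary \ref{Hilbert-Mumford}, the restriction of $\Psi$ to $\spaz'\times G$ is again a Kempf-Ness function and the associated $\fun,\Lambda,\la$ are the restrictions of the original ones, while $\spaz'$ is compact. The only soft information available, $\overline{Gy}\subseteq\spaz'$, points the wrong way: it gives $\overline{\fun(Gy)}\subseteq\overline{\fun(Gx)}$, so one cannot simply transport the zero of $\fun$ coming from the semi-stability of $x$ into $\overline{Gy}$. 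The zero has to be produced inside $\overline{Gy}$ itself.

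First I would pass to a minimal piece. Since $\overline{Gy}$ is a nonempty compact $G$-invariant subset of $\spaz'$ and nested nonempty closed sets in a compact space have nonempty intersection, Zorn's lemma yields a minimal nonempty closed $G$-invariant subset $C\subseteq\overline{Gy}$; by minimality every $G$-orbit in $C$ is dense in $C$. Consider $N(z):=\inf_{p\in\tits}\la_z(p)=\inf_{\xi\in S(\liep)}\Lambda(z,\xi)$. By Corollary \ref{stable-open-abstract-setting} the function $\Lambda$ is lower semicontinuous on $\spaz'\times S(\liep)$, and taking the infimum over the compact sphere $S(\liep)$ keeps $N$ lower semicontinuous; by \eqref{maximal-weight-Ginvariant} the quantity $N$ is $G$-invariant. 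A lower semicontinuous $G$-invariant function all of whose orbits are dense is necessarily constant on $C$, say $N\equiv c$. If $c\geq 0$ then $\la_z\geq 0$ for every $z\in C$, so by Theorem \ref{semi-stable-abstract} applied to the compact space $C$ every $z\in C$ is semi-stable in $C$; as $C=\overline{Gz}$ this means $C\cap\fun^{-1}(0)\neq\vacuo$, giving a zero of $\fun$ in $\overline{Gy}$ and hence the semi-stability of $y$.

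It therefore remains to exclude $c<0$, and this I expect to be the main obstacle. The natural attack is an escape argument. Choose $z\in C$ and a unit vector $\xi\in\liep$ with $\Lambda(z,\xi)=c<0$. By \eqref{derivata-ovunque} one has $\tfrac{\mathrm d}{\mathrm dt}\Psi(z,\exp(t\xi))=\langle\fun(\exp(t\xi)z),\xi\rangle$, which by ($P3$) is nondecreasing in $t$ and converges to $\Lambda(z,\xi)=c$, while $\Psi(z,\exp(t\xi))\to-\infty$. Let $z_\infty\in C$ be a limit point of $\exp(t\xi)z$ as $t\to+\infty$, which exists by compactness of $C$. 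Using the cocycle condition ($P4$) one checks $\Lambda(\exp(s\xi)z,\xi)=c$ for all $s$; then the continuity of $\fun$ gives $\langle\fun(z_\infty),\xi\rangle=c$, the lower semicontinuity of $\Lambda$ gives $\Lambda(z_\infty,\xi)\leq c$, and since always $\langle\fun(z_\infty),\xi\rangle\leq\Lambda(z_\infty,\xi)$ we obtain $\langle\fun(z_\infty),\xi\rangle=\Lambda(z_\infty,\xi)=c$. Thus $r\mapsto\Psi(z_\infty,\exp(r\xi))$ has equal slopes at $0$ and at infinity, hence is affine, and ($P3$) forces $\exp(\R\xi)\subseteq G_{z_\infty}$ while $\langle\fun(z_\infty),\xi\rangle=c<0$.

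Deriving a contradiction from this configuration — a closed flat $\exp(\R\xi)$ contained in the stabilizer on which $\fun$ pairs negatively, inside the minimal set $C$ — is the crux, and it is exactly the point at which one must go beyond the abstract axioms ($P1$)--($P6$): it is the incarnation, in the present language, of the Kempf-Ness principle that a closed orbit is polystable. For the two motivating examples of Sections \ref{gradient-map} and \ref{sez-misure} this principle is available through the work of Heinzner, Schwarz and Stötzel, so there one may argue directly instead: the orbit $Gz_0$ of the polystable point $z_0\in\overline{Gx}\cap\fun^{-1}(0)$ is closed and is the unique closed orbit of $\spaz'$, hence lies in the closure of every $G$-orbit of $\spaz'$; in particular $z_0\in\overline{Gy}\cap\fun^{-1}(0)$, which proves that $y$ is semi-stable.
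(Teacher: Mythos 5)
There is a genuine gap, and you identify it yourself: the case $c<0$ of your minimal-set dichotomy is never excluded. The reduction to a minimal closed $G$-invariant set $C\subseteq\overline{Gy}$, the constancy of $N=\inf_{\xi\in S(\liep)}\Lambda(\cdot,\xi)$ on $C$, and the conclusion when $c\geq 0$ via Theorem \ref{semi-stable-abstract} are all fine, and your escape argument does correctly produce a point $z_\infty\in C$ with $\exp(\R\xi)\subset G_{z_\infty}$ and $\sx\fun(z_\infty),\xi\xs=c<0$; but no contradiction is extracted from this configuration, so the corollary is not proved. The fallback in your final paragraph does not repair this within the paper: it invokes the closedness of the orbit of a zero of $\fun$, the uniqueness of the closed orbit in $\spaz'$, and the existence of a closed orbit in every orbit closure -- none of which is stated or proved in this paper, and each of which would require substantial additional argument in the present setting. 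So as written the argument establishes the statement neither in the abstract framework nor, rigorously, in the motivating examples.

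The missing idea is that one never needs to manufacture a zero of $\fun$ inside $\overline{Gy}$ at all. Since $\spaz$ is assumed compact in this section, Theorem \ref{semi-stable-abstract} converts semi-stability into the inequality $\la_y\geq 0$ on $\tits$, and this inequality is what one propagates to the orbit closure, using exactly the two ingredients you already cite but deploy only to set up the minimal-set reduction: the equivariance \eqref{maximal-weight-Ginvariant}, which for a net $g_\alpha x\to y$ rewrites $\la_y(v)$ as $\la_{g_\alpha^{-1}y}(g_\alpha\cdot v)$ for every $\alpha$, and the lower semicontinuity of $\la$ on $\spaz\times\tits$, which gives $\la_y(v)\geq\liminf_\alpha\la_{g_\alpha^{-1}x}(g_\alpha\cdot v)\geq 0$, the last inequality because $x$ is semi-stable and hence $\la_x\geq 0$ everywhere by Theorem \ref{semi-stable-abstract} again. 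That three-line chain is the paper's entire proof; the minimal-set machinery and the closed-orbit principle are not needed.
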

\begin{proof}
Let $g_{\alpha} \in G$ be a net such that $g_{\alpha}  x \rightarrow y$ and let $v\in \tits$. By the $G$-equivariance of the maximal weight (\ref{def-la}) and the semicontinuity of $\la$, we get:
\[
\lambda_y (v)=\lambda_{g_{\alpha}^{-1}y}(g_{\alpha} v)\geq \liminf_{\alpha }\lambda_{g_{\alpha}^{-1} x} (g_{\alpha} v)\geq 0,
\]
concluding the proof.
\end{proof}
\section{The integral of the gradient map}\label{gradient-map}
Let $U$ be a compact connected Lie group and {denote by $\liu$ its Lie algebra and by} $U^\C$ its
complexification.  Let $(Z,\omega)$ be a K\"ahler manifold on which
$U^\C$ acts holomorphically. Assume that $U$ acts in a Hamiltonian
fashion with momentum map $\mu:Z \lra\liu^*$.  Consider a closed connected subgroup $G$
of $U^\C$ compatible with respect
to the Cartan decomposition of $U^\C${, i.e. $G=K\exp (\liep)$, for $K=U\cap
G$ and $\liep=\lieg \cap i\liu$
(see \cite{heinzner-schwarz-stoetzel,heinzner-stoetzel-global})}%\sout{. This means that $G$ is a
%closed subgroup of $U^\C$ such that $G=K\exp (\liep)$, where $K=U\cap
%G$ and $\liep=\lieg \cap i\liu$}
\cite{heinzner-schwarz-stoetzel,heinzner-stoetzel-global}.  The
inclusion $i \liep \hookrightarrow \liu$ induces by restriction a
$K$-equivariant map $\mu_{i \liep}:Z \lra (i \liep)^*$.
There is a
$\mathrm{Ad}\, (U^\C)$--invariant and non-degenerate bilinear {form} $B:\liu^\C \times \liu^\C \lra \R$ which is
positive definite on $i\liu$, negative definite on $\liu$ and such
that $B(\liu,i\liu)=0$ (see \cite[p. 585]{biliotti-ghigi-heinzner-2}). 
Therefore $B$ is
$\mathrm{Ad}\, (U^\C)$--invariant, non-degenerate and its restriction
to $\lieg$ satisfies the following conditions: $B$ is $\mathrm{Ad}(G)$--invariant, $B(\liek,\liep)=0$, B restricted to $\liek$ is negative definite and $B$ restricted to $\liep$ is positive definite. Using $\scalo$, we identify $\liu
\cong \liu^*$.   For $z \in Z$, let $\mup (z) \in \liep$ denote $-i$ times the
component of $\mu(z)$ in the direction of $i\liep$.  In other words we
require that $\sx \mup (z) , \beta \xs = - \sx \mu(z) , i\beta\xs$,
for any $\beta \in \liep$. Then, we view $\mu_{ \liep}$ as a map:
\begin{gather*}
  \mu_\liep : Z \ra \liep ,
\end{gather*}
which is called the $G$-\emph{gradient map} or \emph{restricted
  momentum map} associated to $\mu$.  We also set:
\begin{gather*}
  \mupb:= \sx \mup, \beta \xs = \mup^{-i\beta}.
\end{gather*}
By definition, it follows that $\mathrm{grad}\mupb =\beta_Z$, where $\beta_Z (x)=\desudtzero \exp(t\beta) x$.

Throughout this section we fix
a $G$-stable subset $M \subset Z$ and we consider the
gradient map $\mup:M \lra \liep$ restricted on $M$. Further, we denote by $\beta_M=\desudtzero \exp(t\beta) x$. Observe that if $M$ is a manifold, then $\beta_M$  is the gradient of $\mup^\beta$ restricted to $M$ with respect to the induced Riemannian structure on $M$.

As Mundet pointed out in \cite{mundet-cont}, the existence of the Kempf-Ness function for an action of a complex reductive group on a K\"ahler manifold given in \cite{mundet-Crelles} also holds for the setting introduced in \cite{heinzner-schwarz-Cartan,heinzner-schwarz-stoetzel,heinzner-schutzdeller}.
\begin{teo}
There exists a Kempf-Ness function for $(M, G, K)$ satisfying the conditions $(P1)-(P5)$. Furthermore, if $M$ is a $G$-stable compact submanifold of $Z$, then $(P6)$ holds as well.
\end{teo}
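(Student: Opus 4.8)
The plan is to exhibit the Kempf-Ness function explicitly as the primitive of the gradient map $\mup$ along geodesics of $X=G/K$, and then to check $(P1)$--$(P6)$ in turn; the only genuinely delicate point will be the cocycle condition $(P4)$. Recall that $\mup:M\to\liep$ satisfies $\grad\mup^\beta=\beta_M$ for every $\beta\in\liep$, the gradient being taken for the Riemannian metric $g_Z=\om(\cdot,J\cdot)$ restricted to the $G$-stable set $M$. Identifying $\liep\cong\liep^*$ by the positive definite form $B|_\liep$, I would first set
\[
\Psi(x,\exp(\xi)):=\int_0^1\langle\mup(\exp(s\xi)x),\xi\rangle\,\mathrm ds,\qquad \xi\in\liep,
\]
and then extend $\Psi$ to $G=K\exp(\liep)$ by $\Psi(x,k\exp(\xi)):=\Psi(x,\exp(\xi))$. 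This forces $(P2)$ by construction, while $(P1)$ follows from smooth dependence of the integral on $(x,\xi)$ together with the fact that $(k,\xi)\mapsto k\exp(\xi)$ is a diffeomorphism $K\times\liep\to G$. Equivalently, $\Psi(x,\cdot)$ is the integral, along the path $s\mapsto\exp(s\xi)$, of the right-trivialized $1$-form $\theta_x$ on $G$ determined by $\theta_x\big(\tfrac{\mathrm d}{\mathrm dt}\big|_{0}\exp(t\eta)g\big)=\langle\mup(gx),\pi_\liep(\eta)\rangle$, where $\pi_\liep:\lieg\to\liep$ is the projection along $\liek$; this second description is what I would use for $(P4)$.

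\emph{Conditions $(P3)$ and $(P5)$.} Writing $f(t)=\Psi(x,\exp(t\xi))$ and substituting $u=st$ gives $f(t)=\int_0^t\langle\mup(\exp(u\xi)x),\xi\rangle\,\mathrm du$, so that $f'(t)=\mup^\xi(\exp(t\xi)x)$ and
\[
f''(t)=\frac{\mathrm d}{\mathrm dt}\mup^\xi(\exp(t\xi)x)=g_Z(\grad\mup^\xi,\xi_M)=|\xi_M(\exp(t\xi)x)|^2\ge0.
\]
Thus $f$ is convex, which is the first half of $(P3)$. For the equality clause, $f''(0)=|\xi_M(x)|^2=0$ holds iff $\xi_M(x)=0$, i.e. iff $x$ is a zero of the vector field $\xi_M$, i.e. iff the flow line $t\mapsto\exp(t\xi)x$ is constant, that is $\exp(\R\xi)\subset G_x$. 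The identity $f'(0)=\langle\mup(x),\xi\rangle$ shows that the gradient map of $(\spaz,G,K,\Psi)$ is $\fun=\mup$, whence $(P5)$ is immediate from the smoothness, in particular continuity, of $\mup$.

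\emph{Condition $(P4)$ — the main obstacle.} The cocycle identity is equivalent to the derivative identity
\[
\frac{\mathrm d}{\mathrm dt}\Big|_{t=0}\Psi(x,\exp(t\eta)g)=\langle\mup(gx),\pi_\liep(\eta)\rangle,\qquad \eta\in\lieg,\ g\in G,
\]
holding in \emph{every} direction $\eta$ and not only along the geodesic used to define $\Psi$. Granting it, $(P4)$ follows by a standard argument: the two functions $h\mapsto\Psi(x,hg)$ and $h\mapsto\Psi(gx,h)$ have, by the displayed identity, the same differential at every point of the connected group $G$ (both given by $\eta\mapsto\langle\mup(hgx),\pi_\liep(\eta)\rangle$), hence differ by a constant; evaluating at $h=e$ and using $\Psi(gx,e)=0$ pins the constant to $\Psi(x,g)$, which is precisely $\Psi(x,hg)=\Psi(x,g)+\Psi(gx,h)$. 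In the directions $\eta\in\liek$ the identity is immediate from $(P2)$, since both sides vanish, and in the directions $\eta\in\liep$ along the defining geodesic it is the computation of $(P3)$. What remains — that $\theta_x$ is closed, so that the defining integral is path independent and the derivative identity holds in the transverse directions as well — is the analytic heart of the matter, and I expect it to be the only nontrivial step. This is exactly where the K\"ahler structure intervenes, through the momentum map relation $\mathrm d\mu^\xi=\iota_{\xi_Z}\om$ and the resulting symmetry of the second derivatives of $\mup^\xi$; it is carried out in \cite{mundet-Crelles} for the complex reductive case and, as the statement records, transfers to the present real gradient-map setting of \cite{heinzner-schwarz-Cartan,heinzner-schwarz-stoetzel}.

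\emph{Condition $(P6)$.} Finally, assume $M$ is a compact submanifold. Then $\mup(M)\subset\liep$ is bounded, say $|\mup(z)|\le C$ for all $z\in M$. At each point of $X$ the differential of $\psi_x$ is obtained by pairing a value of $\mup$ with tangent vectors, so $\|(d\psi_x)\|\le C$ everywhere; hence $\psi_x$ is globally $C$-Lipschitz for every $x\in M$, which is $(P6)$. This completes the verification that $\Psi$ is a Kempf-Ness function for $(M,G,K)$.
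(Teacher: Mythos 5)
Your construction is the same as the paper's: you integrate the right-trivialized $1$-form $\theta_x$ (the paper calls it $\sigma^x$) determined by $\mup$ and the projection $\pi_\liep$, and you correctly identify that everything except the cocycle condition is routine and that $(P4)$ reduces to closedness of this form. The gap is that you then stop: the closedness of $\theta_x$ is precisely the analytic heart of the proof, and you defer it entirely to \cite{mundet-Crelles} rather than proving it. The paper carries out this computation explicitly, by bilinearity reducing to three cases for the directions $\xi,\eta$ of right-invariant vector fields: for $\xi,\eta\in\liek$ the form vanishes identically on the relevant fields; for $\xi\in\liek$, $\eta\in\liep$ closedness follows from the $K$-equivariance of $\mup$, which gives $X\sigma(Y)=\langle[\xi,\mup(gx)],\eta\rangle$ and cancels the term $\langle\mup(gx),\pi_\liep([\eta,\xi])\rangle$; and for $\xi,\eta\in\liep$ one has $\sigma([X,Y])\equiv 0$ (since $[\liep,\liep]\subset\liek$) while $X\sigma(Y)=\langle\eta_M,\xi_M\rangle$ is symmetric in $\xi,\eta$, using $\grad\mup^\eta=\eta_M$. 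In a blind proof this computation cannot be replaced by a citation to the complex reductive case, because verifying that it transfers to the real gradient-map setting is exactly what the theorem asserts.

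A second, smaller omission: closedness alone does not give a well-defined primitive, since $G$ need not be simply connected. The paper disposes of this by noting that any loop in $G$ based at $e$ is homotopic (via the Cartan decomposition $G=K\exp(\liep)$) to a loop in $K$, where $i^*\sigma=0$, so all periods vanish and $\sigma$ is exact. Your alternative of defining $\Psi$ along the distinguished path $s\mapsto\exp(s\xi)$ and extending by $K$-invariance produces a well-defined \emph{function}, but your cocycle argument needs $d\Psi(x,\cdot)=\theta_x$ at \emph{every} point of $G$, and that identification is exactly the exactness statement you have not addressed. The remaining verifications of $(P1)$, $(P3)$, $(P5)$ and $(P6)$ agree with the paper's and are fine.
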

\begin{proof}
{Fix $x\in M$. Let $\pi_\liep: \lieg \lra \liep$ be the linear projection induced by the decomposition $\lieg=\liek\oplus \liep$ and identify $T_eG$ with $\lieg$ in the usual way. For $g\in G$ and $v\in T_gG$, one has $dR_{g\meno} (v) \in \lieg$. Thus, we can define a $1$-form $\sigma$ on $G$ by setting:
\begin{gather*}
  \sigma_g(v) : = \sx \mup(g x) , \im (dR_{g\meno}(v)) \xs.
\end{gather*}
Observe that $\sigma_g \in T_gG^*$ and $\sigma \in \est^1(G)$.
}
%\sout{Fix $x\in M$. Define a $1$-form on $G$ by the following rule.  We
%identify $T_eG$ with $\lieg$ in the usual way. The decomposition $\lieg=\liek\oplus \liep$ induces a linear projection $\pi_\liep: \lieg \lra \liep$.
%If $g\in G$ and $v\in T_gG$, then $dR_{g\meno} (v) \in \lieg$.
%Then set}
%\begin{gather*}
% \sigma_g(v) : = \sx \mup(g\cdot x) , \im (dR_{g\meno}(v)) \xs.
%\end{gather*}
%\sout{Then $\sigma_g \in T_gG^*$ and $\sigma \in \est^1(G)$.
%}
 When we need to
stress the dependence on $x$ we will write $\sigma^x$. We claim that $\sigma$ is closed.
In order to prove it, fix $g\in G$, $v, w\in T_gG$ and let $\xi , \eta \in \lieg$ be such
that $dR_g(\xi ) = v$ and $dR_g(\eta) = w$. Further, let also $X,Y\in \XX(G)$ be the
fundamental vector fields corresponding to $\xi $ and $\eta$ under the
action of \emph{left} multiplication.  In other words $X$ is the
right-invariant vector field such that $X(e) = \x $, i.e. for $h\in G$,
\begin{gather*}
  X(h) : = dR_h(\x ) = \desudtzero \exp (t\x ) h.
\end{gather*}
%\sout{We use the formula}
%$$
% \sout{ (d\sigma) _g (v, w) = X (g)  \sigma(Y) - Y(g)  \sigma(X) -
%  \sigma([X,Y])(g).}
%$$
For a left action the map that sends a vector in $\lieg$ to its
fundamental vector field is an anti-isomorphism of Lie algebras. Thus
$[X,Y]$ is the fundamental vector field corresponding to $-[\xi ,
\eta]$.
Hence:
\begin{gather*}
  [X,Y](g) := dR_g ( - [\xi , \eta]),\\
  \sigma( [X,Y])(g ) = \sx \mup(g x), \im([ \xi, \eta ])\xs.
\end{gather*}
We can assume by linearity that $\xi , \eta \in \liek \cup  \liep$.

It is immediate from the definition that $\sigma(X) = \sigma (Y) =
\sigma ([X,Y]) \equiv 0$ if $\xi , \eta \in \liek$. Thus recalling that:
$$
 (d\sigma) _g (v, w) = X (g)  \sigma(Y) - Y(g)  \sigma(X) -
  \sigma([X,Y])(g),
$$
for $\xi , \eta \in \liek$ the claim is proven.

Assume now that $\xi  \in \liek$ and $\eta \in \liep$.  Then
$\sigma(X) \equiv 0$ and for $h\in G$,
\[
\sigma(Y) (h) = \langle \mup(h x), \eta \xs =\mup^{\eta} (h x).
\]
By the $K$-equivariance of the gradient map we have:
\[
\begin{split}
(X \sigma(Y))(g) &= \desudtzero \sigma(Y) ( \exp (t\xi ) g) =
  \desudtzero \mup^{\eta}( \exp (t\xi ) g  x)  \\
  &=\desudtzero \langle \mathrm{Ad}(\exp(t \xi) (\mup(g x)),\eta \rangle = \langle [\xi , \mup(g  x)],\eta \rangle.
 \end{split}
\]
Thus:
\[
\begin{split}
  d\sigma(v,w) &= \langle [\xi , \mup(g  x)],\eta \rangle -\sx \mup(g  x), \im([\eta,\xi ])\xs
  \\ &=   \langle [\xi , \mup(g  x)],\eta \rangle - \sx \mup(g  x), [\eta,\xi ] \xs \\
  &=  \langle [\xi , \mup (g  x)],\eta \rangle   -\sx [ \xi , \mup (g  x) ] , \eta \xs\\ & = 0.
\end{split}
\]
Finally, we consider the last possibility, $\xi, \eta \in \liep$. In this case $[\xi , \eta ] \in \liek$ and thus $\sigma([X,Y]) \equiv 0$.
On the other hand:
\[
  (X\sigma(Y))(g) = \desudtzero \sigma(Y) ( \exp (t\xi )\cdot  g) =
   ( d\mup^{\eta} )_{(g x)}(  \xi_M) = \langle \eta_M , \xi_M \rangle,
\]
which is symmetric in $\xi$ and $\eta$, implying $d\sigma (v, w) =0$ also in this case.

This shows that $\sigma$ is closed. Let $\ga \in \Omega(G,e,e)$. Then
there exists $\ga'\in \Omega (K,e,e)$ such that $\ga  \sim i \circ \ga'$,
where $i: K\hookrightarrow G$, and thus:
\begin{gather*}
  \int_\ga \sigma = \int_{\ga'} i^*\sigma.
\end{gather*}
Since $i^*\sigma =0$, it follows that $\sigma$ is exact. Let $\Psi_x \in \cinf (G)$
be the unique function such that $\Psi_x(e) = 0 $ and $d\Psi_x=
\sigma^x$. Since ${\sigma^x}_{|_{TK}}\equiv 0$, then $\Psi_x(h)=0$ for any $h\in K$. Moreover, for any $\eta  \in \liep$, we have:
  \begin{gather*}
    ( d\Psi_x )_{(e)}(\eta ) = \mup^{\eta}  (x).
  \end{gather*}
 Thus, the function:
\begin{gather*}
  \Psi: M \times G \ra \R \qquad \Psi(x,g) : = \Psi_x(g),
\end{gather*}
satisfies conditions $(P_1)$ and $(P_5)$.
%
%{We claim that the following facts hold:
%\begin{enumerate}
%%\item if $\eta  \in \liep$, then
%%  \begin{gather*}
%%    ( d\Psi_x )_{(e)}(\eta ) = \mup^{\eta}  (x).
%%  \end{gather*}
%\item If $\eta  \in \liep$, then
%  \begin{gather*}
%    \frac{\mathrm{d^2}}{\mathrm{dt}^2 } \Psi_x(\exp(t\eta )) \geq 0
%  \end{gather*}
%  and equality holds
%\begin{gather*}
%    \frac{\mathrm{d^2}}{\mathrm{dt}^2 }\bigg \vert_{t=0}  \Psi_x(\exp(t\eta )) = 0
%\end{gather*}
%if and only if $\eta_M(x) = 0$ iff $\exp(t\eta)\in G_x$.
%\item
%  \begin{gather*}
%    \Psi_x(g) + \Psi_{gx} (h) = \Psi_x (hg).
%  \end{gather*}
%\end{enumerate}
%The first condition is easy to check. }
In order to prove $(P_3)$, compute:
\[
\begin{split}
\desudt \Psi_x(\exp(t\eta )) &= (\sigma^x)_{\exp(t\eta )} \bigl (
    \desudt \exp(t\eta ) \bigr )
    \\
    & =(\sigma^x)_{\exp(t\eta )} (dR_{\exp(t\eta) } (\eta )) \\ &= \sx
    \mup (\exp(t\eta )  x), \eta  \rangle\\ &= \mup^\eta  (\exp(t\eta )  x) .
\end{split}
\]
Therefore,
  \begin{gather*}
    \frac{\mathrm{d^2}}{dt^2}  \Psi_x(\exp(t\eta )) = d\mu^{\eta} (
    \eta_M)(\exp(t\eta )  x) = ||\eta _M||^2 (\exp(t\eta ) x),
  \end{gather*}
and thus $(P_3)$ follows.

In order to prove $(P_4)$, let $g\in G$ and $x\in M$. We claim that
  $R_g^*\sigma^x = \sigma^{g x}$. Indeed if $v\in T_hG$ and $w=
  dR_{h^{-1}}(\x )$, then:
  \begin{gather*}
    \sigma^{g x}_h (v) = \sx \mu(hg  x) , \im (w )\xs, \\
    (R_g^* \sigma^x)_h(v) = (\sigma^x)_{hg} (dR_g (v)) \xs =
   \sx \mu(hgx), \im(dR_{(hg)^{-1}}dR_g (\x ))\xs =\sx \mu(hgx), \im (w ) \xs.
  \end{gather*}

  Thus the claim is proven.  Therefore $\Psi_{gx} - R_g^*\Psi_x = c$
  is a constant. Evaluating at $h=e$ we get:
  \begin{gather*}
    c = 0 - \Psi_x(g)
  \end{gather*}
and thus:
  \begin{gather*}
    \Psi_{gx} (h) + \Psi_x(g) = \Psi_x(hg),
  \end{gather*}
  as desired. Property $(P2)$ follows by the cocycle condition together with the fact that for any $x\in M$, $\Psi_x(h)=0$ for all $h\in K$.
%We have proved that $\Psi$ satisfies $(P1)-(P5)$.

Finally, if $M$ is a compact $G$-stable submanifold of $Z$, then the norm square of the gradient map restricted to $M$ is bounded. Hence $\psi_x$ is Lipschitz since its differential is bounded and thus $(P6)$ holds.
\end{proof}
As direct consequence of Corollary \ref{stable-open-abstract-setting} we get the following result.
\begin{teo}
Let $M\subset Z$ be a $G$-invariant subset of $Z$. Then the set of stable points for the gradient map $\mup\!:M \lra \liep$ restricted to $M$ is open. Moreover, if $G=A=\exp(\lia)$, where $\lia\subset \liep$ is an abelian subalgebra, and  $\mu_{\lia}:M \lra \lia$ is the gradient map of $A$, then for any $\beta \in \lia$, the set
$
\{p\in M:\, A p \cap \mu_{\lia}^{-1}(\beta)\neq \emptyset\ \mathrm{and}\ \lia_p=\{0\} \}
$
is open.
\end{teo}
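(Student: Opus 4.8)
The plan is to derive both assertions from Corollary~\ref{stable-open-abstract-setting}, exploiting that the preceding theorem provides a Kempf-Ness function $\Psi$ for $(M,G,K)$ satisfying $(P1)$--$(P5)$. The first statement is then immediate: Corollary~\ref{stable-open-abstract-setting} asserts that for any Kempf-Ness function the set of stable points is open, so the set of stable points of $\mup:M\lra\liep$ is open in $M$.

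For the second statement I would first make explicit what $A$-stability means when $G=A=\exp(\lia)$ with $\lia\subset\liep$ abelian. Since the Cartan decomposition $G=K\exp(\liep)$ is a diffeomorphism, one has $A\cap K=\{e\}$; thus the maximal compact subgroup of $A$ is trivial, $A\cong\lia$, and by Definition~\ref{stabilita} a point $p\in M$ is $A$-stable exactly when $Ap\cap\mu_\lia^{-1}(0)\neq\vacuo$ and $\lia_p$ is conjugate to a subalgebra of $\{0\}$, i.e. $\lia_p=\{0\}$. This handles the level $\beta=0$.

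To treat an arbitrary level $\beta\in\lia$, the key idea is to translate the gradient map. Because $A$ is abelian every vector of $\lia$ is $\Ad(A)$-fixed, so for $\beta\in\lia$ I would set
\[
\Psi^\beta(x,\exp\eta):=\Psi(x,\exp\eta)-\sx\beta,\eta\xs,\qquad\eta\in\lia .
\]
A direct verification shows $\Psi^\beta$ is again a Kempf-Ness function: $(P1)$ and $(P5)$ are clear, $(P2)$ is vacuous (trivial maximal compact), the extra term is linear in $\eta$ and hence does not alter the second derivative appearing in $(P3)$, and the cocycle condition $(P4)$ survives because $\exp\eta_1\exp\eta_2=\exp(\eta_1+\eta_2)$ while $\sx\beta,\cdot\xs$ is additive. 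Differentiating, $\sx\fun^\beta(x),\xi\xs=\desudtzero\Psi^\beta(x,\exp(t\xi))=\sx\mu_\lia(x)-\beta,\xi\xs$, so the gradient map of $\Psi^\beta$ is $\mu_\lia-\beta$.

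Finally, since $(\mu_\lia-\beta)^{-1}(0)=\mu_\lia^{-1}(\beta)$, the set displayed in the statement is precisely the set of $A$-stable points for $\Psi^\beta$ (polystability $Ap\cap\mu_\lia^{-1}(\beta)\neq\vacuo$ together with $\lia_p=\{0\}$). Applying Corollary~\ref{stable-open-abstract-setting} to $\Psi^\beta$ shows this set is open in $M$. The only non-routine point is recognizing that translating a gradient map by $\beta$ is realized by an honest Kempf-Ness function; this is exactly where commutativity of $A$ enters, guaranteeing the cocycle condition for $\Psi^\beta$.
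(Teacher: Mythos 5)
Your proposal is correct and follows exactly the route the paper intends: the paper offers no written proof, presenting the theorem as a direct consequence of Corollary~\ref{stable-open-abstract-setting}, and your explicit verification that the shifted function $\Psi^\beta(x,\exp\eta)=\Psi(x,\exp\eta)-\sx\beta,\eta\xs$ is again a Kempf-Ness function (with gradient map $\mu_\lia-\beta$, triviality of the maximal compact of $A$, and commutativity saving the cocycle condition) is precisely the ``shifting of the gradient map by an $\Ad(K)$-fixed point'' that the authors invoke elsewhere in the paper. No gaps.
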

When $M$ is a compact $G$-stable submanifold of $Z$, Theorems \ref{polystable-condition} and \ref{semi-stable-abstract} also hold for the gradient map $\mup\!:M \lra \liep$ restricted on $M$. More precisely we have:

\begin{teo}
Let $M\subset Z$ be a compact $G$-invariant submanifold of $Z$ and let $\mup:M \lra \liep$ be the gradient map restricted to $M$.
Then $x \in M$ is semi-stable if and only if $\la_x \geq 0$. Furthermore,  a point $x\in M$ is polystable if and only if $\la_x \geq 0$ and $Z(x) = \partial_\infty X'$ for some totally geodesic submanifold $X'\subset X=G/K$.
\end{teo}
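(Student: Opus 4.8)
The plan is to deduce this statement directly from the abstract criteria established earlier, namely Theorem \ref{semi-stable-abstract} for semi-stability and Theorem \ref{polystable-condition} for polystability, once it is verified that the concrete Kempf-Ness function constructed above produces exactly the gradient map $\mup$ and the maximal weight $\la_x$ occurring in the statement.

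First I would invoke the preceding theorem: since $M$ is a compact $G$-stable submanifold of $Z$, there is a Kempf-Ness function $\Psi$ for $(M,G,K)$ satisfying $(P1)$--$(P5)$, and the compactness of $M$ guarantees that $(P6)$ holds as well. Thus the whole abstract apparatus, with $\spaz = M$, is available. Next I would identify the abstract gradient map $\fun$ of $(M,G,K,\Psi)$ with $\mup$. By construction one has $(d\Psi_x)_e(\eta) = \mup^\eta(x)$ for $\eta \in \liep$, so by the defining relation of $\fun$ in Section \ref{sec:abstract-setting} we obtain $\sx \fun(x), \eta \xs = \desudtzero \Psi(x, \exp(t\eta)) = \mup^\eta(x)$ for every $\eta \in \liep$. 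Under the identification $\liep \cong \liep^*$ induced by $B$ this yields $\fun = \mup$, and hence $\mume = \mup\meno(0)$. Consequently the notions of semi-stability and polystability of Definition \ref{stabilita}, phrased in terms of $\mume$, coincide with the statements to be proved, and the maximal weight $\la_x$ appearing here is precisely the one attached to $\Psi$ in Section \ref{maximal-weight-section}.

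With these identifications in place the two assertions follow at once. For semi-stability, $\spaz = M$ is compact, so Theorem \ref{semi-stable-abstract} applies verbatim and yields that $x$ is semi-stable if and only if $\la_x \geq 0$. For polystability, property $(P6)$ holds, so Theorem \ref{polystable-condition} applies and yields that $x$ is polystable if and only if $\la_x \geq 0$ and $Z(x) = \partial_\infty X'$ for some totally geodesic submanifold $X' \subset X = G/K$. Since no genuine computation remains, the only delicate point---and hence the \emph{main obstacle}, such as it is---is the bookkeeping that confirms the equality $\fun = \mup$ together with the fact that all hypotheses of both abstract theorems are met; once this is checked, the theorem is a formal corollary of the abstract theory developed in the earlier sections.
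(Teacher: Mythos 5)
Your proposal is correct and is essentially the paper's own argument: the paper offers no separate proof, simply presenting the theorem as an immediate consequence of Theorems \ref{semi-stable-abstract} and \ref{polystable-condition} once the Kempf--Ness function for $(M,G,K)$ is known to satisfy $(P1)$--$(P6)$ (with $(P6)$ coming from compactness of $M$) and its abstract gradient map is identified with $\mup$. Your explicit verification that $\sx \fun(x),\eta\xs = \mup^\eta(x)$, hence $\fun=\mup$ and $\mume=\mup\meno(0)$, is exactly the bookkeeping the paper leaves implicit.
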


\section{Measures}
\label{sez-misure}
Let $M$ be a compact Hausdorff space. Denote by $\misu(M)$ the vector space
of finite signed Borel measures on $M$.  Observe that they are automatically
Radon \cite[Thm. 7.8, p. 217]{folland-real-analysis}.  Denote by
$C(M)$ the space of real continuous functions on $M$ which is a Banach
space with the $\sup$--norm. By the Riesz Representation Theorem
(see e.g. \cite[p.223]{folland-real-analysis}) $\misu(M)$ is the
topological dual of $C(M)$.
  We endow $\misu(M)$ with the weak-$*$ topology as dual of $C(M)$ that it is
  usually called the weak topology on measures. We use
  the symbol $\nu_\alf \weak \nu$ to denote the weak convergence of
  the net $\{\nu_\alf\}$ to the measure $\nu$.  Finally, we denote by
  $\proba(M) \subset \misu(M)$ the set of Borel probability measures
  on $M$.  It is well-known that $\pb M$ is a compact convex subset of
  $\misu(M)$. Indeed the cone of positive measures is closed and
  $\proba(M)$ is the intersection of this cone with the closed affine
  hyperplane $\{\nu\in \misu(M) : \nu(M) = 1\}$. Therefore $\proba(M)$ is
  closed and  it is
  contained in the closed unit ball in $\misu(M)$, which is compact in
  the weak topology by the Banach-Alaoglu Theorem
  \cite[p. 425]{dunford-schwartz-1}.
  % \cite[p. 115]{reed-simon-1} dunford I 424 .
  Since $C(M)$ is separable, the weak topology on $\pb M$ is metrizable
  % by general theorems in functional analysis
 (see  \cite [p. 426]{dunford-schwartz-1}).
  % and one can also provide explicit distances \verde{villani e
  % bogachev}.

If $f : M \ra N$ is a measurable map between measurable spaces and
  $\nu $ is a measure on $M$, the \emph{image measure} $f\pf\nu$ is
  the measure on $Y$ such that $f\pf\nu(A) : = \nu (f\meno ( A ))$.  Observe that it
  satisfies the \enf{change of variables formula}
  \begin{gather}
    \label{eq:pushforward}
    \int_N u (y) d( f\pf\nu)(y) = \int_M u(f(x)) d\nu(x).
  \end{gather}
If $G$ acts on $M$, then we have an action on the probability measures on $M$ as follows:
\begin{gather}
    \label{azione}
    G\times \pb M \ra \pb M, \quad (g, \nu) \mapsto g_*\nu.
  \end{gather}
Let $U$ be a compact connected Lie group and $U^\C$ its
complexification. As in section \ref{gradient-map} we assume that  $G=K\exp (\liep)$ is a compatible subgroup of $U^\C$ and $M$ is a $G$-stable compact subset of a K\"ahler manifold $(Z,\omega)$. One can prove in a totally similar way as in the proof of \cite[Lemma 5.5 p. 18]{bgs} that the action \eqref{azione} is continuous with respect to the weak topology.
\begin{lemma}\label{umpfi}
  Let $X$ be a vector field on $Z$ such that its flow $\{\phi_t\}$ preserves $M$. If
  $ \nu \in \misu(M)$ and $X$ vanishes $\nu$--almost everywhere, then
  $\phi_t\pf \nu =\nu$ for any $t$. Hence, if $v \in \lieg$ and $v_M(x) =0$ for every
  $x$ outside a set of $\nu$--measure zero, then
  $\exp(\R v) \subset G_\nu$.
\end{lemma}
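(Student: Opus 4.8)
The plan is to prove the first assertion by a change-of-variables computation and then deduce the second by specializing $X$ to the fundamental vector field of a one-parameter subgroup. First I would record the key pointwise fact: if $X(x)=0$, then $x$ is a fixed point of the flow, i.e.\ $\phi_t(x)=x$ for all $t$. Indeed, the constant curve $t\mapsto x$ solves the defining initial value problem $\dot\gamma(t)=X(\gamma(t))$, $\gamma(0)=x$, so by uniqueness of integral curves it coincides with $t\mapsto\phi_t(x)$. Setting $F:=\{x\in M:\, X(x)=0\}$, the hypothesis that $X$ vanishes $\nu$-almost everywhere says $|\nu|(M\setminus F)=0$, so $\nu$ is concentrated on $F$ and $\phi_t\restr{F}=\Id_F$.

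Next, for the first assertion, I would test $\phi_t\pf\nu$ against an arbitrary $u\in C(M)$ using the change of variables formula \eqref{eq:pushforward}:
\[
\int_M u\, \mathrm d(\phi_t\pf\nu)=\int_M u\circ\phi_t\, \mathrm d\nu=\int_F u\circ\phi_t\, \mathrm d\nu=\int_F u\, \mathrm d\nu=\int_M u\, \mathrm d\nu,
\]
where the second and last equalities use $|\nu|(M\setminus F)=0$ and the middle one uses $\phi_t\restr{F}=\Id_F$. Since $\misu(M)$ is the topological dual of $C(M)$ by the Riesz Representation Theorem, a measure is determined by its pairing with continuous functions, and hence $\phi_t\pf\nu=\nu$ for every $t$.

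Finally, for the \emph{Hence} part, I would apply the first assertion to the fundamental vector field $X$ of $v\in\lieg$, whose flow is $\phi_t(x)=\exp(tv)x$. This flow preserves $M$ because $M$ is $G$-stable, and the value of $X$ at a point $x\in M$ is exactly $v_M(x)$; thus the hypothesis that $v_M(x)=0$ for $\nu$-almost every $x$ says precisely that $X$ vanishes $\nu$-almost everywhere. The first part then yields $(\exp(tv))\pf\nu=\phi_t\pf\nu=\nu$ for all $t$, which is the statement $\exp(\R v)\subset G_\nu$.

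I expect no serious obstacle here: the only points requiring care are the uniqueness-of-integral-curves argument, which is what guarantees that zeros of $X$ are genuinely fixed by the flow (so that $\phi_t$ acts as the identity on the full-measure set $F$), and the bookkeeping that, for a signed measure, the phrase \emph{$\nu$-almost everywhere} is read with respect to the total variation $|\nu|$, so that integrals over $M\setminus F$ vanish.
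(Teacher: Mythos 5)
Your proof is correct and follows essentially the same route as the paper: both arguments rest on the observation that zeros of $X$ are fixed by the flow (by uniqueness of integral curves), so that $\phi_t$ acts as the identity on a set carrying all of $|\nu|$. The only cosmetic difference is that the paper verifies $\phi_t\pf\nu=\nu$ by evaluating both measures on an arbitrary Borel set via the decomposition $\phi_{-t}(A)=(A\setminus N)\sqcup\phi_{-t}(N\cap A)$, whereas you test against $C(M)$ and invoke the Riesz duality — which is equally valid here since all measures involved are Radon.
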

\begin{proof}
  Set $N:=\{p\in M : X(p) \neq 0\}$. Then $\nu(N)=0$ and for any
  $t\in \R$ and any $x\not \in N$, $\phi_t(x)=x$. In particular both
  $N$ and $M-N$ are $\phi_t$-invariant. If $A\subset M$ is measurable,
  then
  \begin{gather*}
    \phi_{-t}(A) =\phi_{-t}((A\setminus N) \sqcup (N\cap
    A))=(A\setminus N)\sqcup \phi_{-t} (N\cap A).
  \end{gather*}
  Since $\phi_{-t} (N\cap A) \subset N$,
  $\phi_t\pf\nu(A) = \nu (\phi_{-t}( A) ) =\nu(A\setminus N)=\nu(A)$.
\end{proof}
\begin{prop} \label{P5M} Let $M$, $G$, $K$ and $ \mup$ be as in $\S$ \ref{gradient-map} and let $\Psi^M$ be the Kempf-Ness function of $(M,G,K)$. The function:
  \begin{gather}
    \label{defipsim} \PsiM : \proba(M) \times G \ra \R, \quad
    \PsiM(\nu, g) : = \int_M \Psi^M (x, g) d\nu(x),
  \end{gather}
  is a Kempf-Ness function for $(\proba(M), G, K)$ satisfying conditions $(P_1)-(P_5)$. If in addition $M$ is compact then $\Psi$ also satisfies condition $(P6)$.  Moreover, if we denote by $X=G/K$, then
  \begin{gather}
    \label{defipsima}
    \psiM_\nu : X \ra \R , \quad \psiM_\nu( gK) := \PsiM (\nu,
    g\meno) = \int_M \psi^M_x(gK) d\nu(x),
  \end{gather}
  and if $\fun$ denotes the gradient map, then:
  \begin{gather}
    \fun : \proba(M) \ra \liep^*, \quad \fun (\nu) : = \int_M \mup(x).
    d\nu(x). \label{def-momento-misure}
  \end{gather}
Finally, if $\exp(\R \beta)\subset G_\nu$, for some $\beta\in \liep$, then $\fun(\nu)\in \liep^\beta$.

\end{prop}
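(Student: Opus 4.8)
The plan is to derive ($P1$)--($P6$) for $\PsiM$ by carrying each property of $\Psi^M$ through the integral $\PsiM(\nu,g)=\int_M\Psi^M(x,g)\,d\nu(x)$, isolating the single clause that needs real work. Conditions ($P1$), ($P2$), ($P4$) and ($P5$) are formal: smoothness in $g$ follows by differentiating under the integral sign, legitimate because $M$ is compact and the $g$-derivatives of $\Psi^M(x,\cdot)$ are jointly continuous, hence locally uniformly bounded; $K$-invariance and the cocycle identity are obtained by integrating the corresponding identities for $\Psi^M$, the latter after using the change of variables \eqref{eq:pushforward} to rewrite $\int_M\Psi^M(\cdot,h)\,d(g_*\nu)=\int_M\Psi^M(g\,\cdot,h)\,d\nu$; and continuity of $\fun$ amounts to weak continuity of $\nu\mapsto\int_M\mup^\beta\,d\nu$ for each fixed $\beta$, which holds since $\mup^\beta\in C(M)$. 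Differentiating $\PsiM(\nu,\exp(t\beta))$ at $t=0$ under the integral and using $(d\Psi^M_x)_e(\beta)=\mup^\beta(x)$ simultaneously produces \eqref{def-momento-misure}, namely $\langle\fun(\nu),\beta\rangle=\int_M\mup^\beta\,d\nu$, and \eqref{defipsima}. Property ($P6$) for $M$ compact is equally soft, since the Lipschitz constant of $\psi^M_x$ is at most $\sup_M\|\mup\|<\infty$ uniformly in $x$, and this bound passes to $\psiM_\nu$.

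The one substantial point among ($P1$)--($P5$) is the equality clause of ($P3$). Integrating convexity gives $\frac{\mathrm d^2}{\mathrm{dt}^2}\PsiM(\nu,\exp(t\beta))\ge0$, and the second-derivative computation of Section \ref{gradient-map} gives
\[
\frac{\mathrm d^2}{\mathrm{dt}^2}\bigg\vert_{t=0}\PsiM(\nu,\exp(t\beta))=\int_M\|\beta_M(x)\|^2\,d\nu(x).
\]
If the left-hand side vanishes, then $\beta_M=0$ $\nu$-almost everywhere, and Lemma \ref{umpfi} gives $\exp(\R\beta)\subset G_\nu$. For the converse I would use that a measure invariant under a gradient flow concentrates on the critical set: since $\beta_M$ is the gradient of $\mup^\beta\restr{M}$, the flow $\exp(t\beta)$ on $M$ is its gradient flow, so $\exp(\R\beta)\subset G_\nu$, i.e. $(\exp(t\beta))_*\nu=\nu$, forces $t\mapsto\int_M\mup^\beta(\exp(t\beta)x)\,d\nu(x)$ to be constant by \eqref{eq:pushforward}; differentiating at $t=0$ yields $\int_M\|\beta_M\|^2\,d\nu=0$, so the second derivative vanishes. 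This proves ($P3$) and, as a by-product, records the implication $\exp(\R\beta)\subset G_\nu\Rightarrow\beta_M=0$ $\nu$-a.e.

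For the final assertion I would combine this concentration with the pointwise behaviour of the classical gradient map. By the by-product above, $\exp(\R\beta)\subset G_\nu$ forces $\nu$ to be supported on $\{x\in M:\exp(\R\beta)\subset G_x\}$, the common fixed points of the flow. On this set one has $\mup(x)\in\liep^\beta$: this is precisely the property of the restricted momentum map established in \cite{heinzner-schwarz-stoetzel} (and recalled in the Introduction as the reason the present setting applies). For completeness one can see it directly: writing $\beta=i\eta$ with $\eta\in\liu$, the action being holomorphic gives $\beta_Z=J\eta_Z$, so $x$ is fixed by $\exp(\R\eta)\subset U$, and $U$-equivariance of $\mu$ gives $[\eta,\mu(x)]=0$; since $\ad(\eta)^2$ maps $i\liep\to\liek\to i\liep$ it preserves $i\liep$, so the orthogonal projection onto $i\liep$ commutes with the spectral projection onto $\ker\ad(\eta)$, whence the $i\liep$-component of $\mu(x)$ defining $\mup(x)$ again lies in $\ker\ad(\eta)$, i.e. $[\beta,\mup(x)]=0$. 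Finally, $\liep^\beta$ being a linear subspace and $\mup(x)\in\liep^\beta$ for $\nu$-a.e. $x$, pairing with any $\zeta$ orthogonal to $\liep^\beta$ gives $\langle\fun(\nu),\zeta\rangle=\int_M\langle\mup(x),\zeta\rangle\,d\nu=0$, whence $\fun(\nu)=\int_M\mup\,d\nu\in\liep^\beta$.

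I expect the main obstacle to be the equality clause of ($P3$) together with the final assertion, both of which rest on the measure-theoretic fact that a probability measure invariant under the gradient flow of $\mup^\beta$ must concentrate on its critical set; the gradient-flow identity $\frac{d}{dt}\mup^\beta(\exp(t\beta)x)=\|\beta_M(\exp(t\beta)x)\|^2$ is what drives this. The one input that cannot come from the abstract axioms alone is the pointwise relation $\mup(x)\in\liep^\beta$ on the fixed-point set, which genuinely uses the K\"ahler and momentum-map structure, hence the appeal to \cite{heinzner-schwarz-stoetzel}.
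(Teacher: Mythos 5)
Your proposal is correct and follows essentially the same route as the paper: integrate each property of $\Psi^M$ under $d\nu$, use Lemma \ref{umpfi} for the equality clause of ($P3$), and derive the final assertion from the concentration of $\nu$ on the fixed-point set $X^\beta$ together with the fact $\mup(X^\beta)\subset\liep^\beta$ from \cite{heinzner-schwarz-stoetzel}. The one place you go beyond the paper is worth noting: the paper's proof only writes out the forward implication in ($P3$) and obtains the concentration of $\nu$ on $X^\beta$ by appealing to the Morse--Bott flow-limit argument of Proposition \ref{mpc}, whereas your single identity $\tfrac{d}{dt}\int_M\mup^\beta(\exp(t\beta)x)\,d\nu(x)=\int_M\|\beta_M\|^2(\exp(t\beta)x)\,d\nu(x)$, combined with the invariance of $\nu$ under the flow, yields both the converse of ($P3$) and the concentration in one elementary stroke, without invoking the Linearization Theorem.
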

For a sake of completeness we sketch the proof which is totally similar to that of Proposition 5.12 in \cite{bgs}.
\begin{proof}

Since $\Psi^M$ is left-invariant with respect to $K$, the same holds for $\PsiM$.

  Fix $\nu \in \proba(M)$. By
  differentiation under the integral sign $\PsiM(\nu, \cd)$ is a
  smooth function on $G$ and for $\x \in \liep$ we have:
  \begin{gather*}
    \frac{\mathrm{d^2}}{\mathrm{dt}^2 } \PsiM(\nu,\exp(t\x )) =\int_M
    \biggl ( \frac{\mathrm{d^2}}{\mathrm{dt}^2 } \Psi^M(x,\exp(t\x ))
    \biggr ) d\nu(x) \geq 0,
  \end{gather*}
  since the integrand is non-negative by $(P3)$.  If
  $
    \frac{\mathrm{d^2}}{\mathrm{dt}^2 } {\bigg \vert_{t=0} }
    \PsiM(\nu,\exp(t\x ))=0,
  $
  then:
  \begin{gather*}
    \frac{\mathrm{d^2}}{\mathrm{dt}^2 } {\bigg \vert_{t=0} }
    \Psi^M(\nu,\exp(t\x ))=0 \quad \text{ $\nu$-almost everywhere}.
  \end{gather*}
  Again by $(P3)$ this implies that $\x _M=0$ $\nu$-almost
  everywhere.  By Lemma \ref{umpfi} it follows that
  $ \exp(\R \x ) \subset G_\nu$. We have proven that $\PsiM$ satisfies
  $(P1)-(P3)$. The cocycle condition for $\PsiM$ follows
  immediately from the cocycle condition for $\Psi^M$.
  Fix $\nu \in \proba(M)$.  It is immediate to verify that the function
  $\psiM$ associated to $\PsiM$ as in \eqref{defpsi} is the one given by \eqref{defipsima}. Therefore it is clearly
 continuous on $\proba(M)$. Finally, it is easy to check that $\PsiM_\nu$ is
  Lipschitz whenever $M$ is a compact manifold.

Let $\beta \in \liep$. Since
 $
X^\beta=\{y\in M;\, \beta_M (x)=0\},
$
is the set of fixed points
$
\{y\in M:\, \exp(t\beta)y=y,\ \textrm{for all}\ t\in \R\, \},
$
 then $X^\beta$ is $G^\beta$-stable and $\mup(X^\beta)\subset \liep^\beta$ (see \cite{heinzner-schwarz-stoetzel}). If $\exp(\R \beta)\in G_\nu$, using the same argument of the proof of Proposition \ref{mpc}, then $\nu$ is supported on $X^\beta$ and so $\fun (\nu)\in \liep^\beta$.
 \end{proof}
From now on we assume that $M$ is a compact $G$-stable submanifold of $Z$. We shall compute the maximal weight using the geometry of the gradient map. We begin recalling the following slice theorem proved in  \cite{heinzner-schwarz-Cartan,heinzner-schwarz-stoetzel}.
\begin{teo}
  [Linearization Theorem]
  \label{line}
  Let $M$, $G$, $K$ and $\mup$ be as in $\S$ \ref{gradient-map}.  If $x$
  is a fixed point of $G$, then there exist an open subset
  $S \subset T_ x M$, stable under the isotropy representation of $G$,
  an open $G$-stable neighborhood $\Omega$ of $x $ in $M$ and a
  $G$-equivariant diffeomorphism $h : S \ra \Omega$.  One can further
  require that $h(0)= x$ and $dh_0 = \Id_{T_x M}$.
\end{teo}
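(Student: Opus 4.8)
The plan is to follow the Heinzner--Schwarz slice theorem, reducing the statement to the classical Bochner linearization of the compact group $K$ and then propagating $K$-equivariance to full $G$-equivariance by means of the complex structure $J$ of $Z$ and the gradient flows generated by $\liep$. Throughout I write $V:=T_xM$ and denote by $\rho:G\ra\Gl(V)$ the isotropy representation $\rho(g):=d(L_g)_x$, which is well defined precisely because $x$ is $G$-fixed. Since $K\subset U$ preserves the \Keler metric of $Z$ and $K$ fixes $x$, the restriction $\rho|_K$ is orthogonal; and for $\beta\in\liep$, because $x$ is a critical point of $\mup^\beta$, the operator $d(\beta_M)_x$ is the self-adjoint Hessian of $\mup^\beta$ at $x$. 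Thus $V$ carries a reductive $G$-module structure, and the target of the argument is to realize this linear model inside $M$.

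First I would linearize $K$. Equip $M$ with the Riemannian metric induced by $\omega$; it is $K$-invariant, and the Riemannian exponential map $\exp_x^M:V\ra M$ satisfies $\exp_x^M(0)=x$ and $d(\exp_x^M)_0=\Id_V$. Because $K$ acts by isometries fixing $x$, one has $k\cdot\exp_x^M(v)=\exp_x^M(\rho(k)v)$ for all $k\in K$, so $\exp_x^M$ is $K$-equivariant. Taking a ball $S_0=\{\|v\|<r\}\subset V$, which is $\rho(K)$-stable since $\rho(K)$ is orthogonal, I obtain a $K$-equivariant diffeomorphism $h_0:=\exp_x^M|_{S_0}$ onto a $K$-stable open neighbourhood $\Omega_0$ of $x$, with $h_0(0)=x$ and $d(h_0)_0=\Id_V$.

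The remaining, and main, difficulty is to correct $h_0$ so that it also intertwines the $\exp(\liep)$-directions. On the linear side $\exp(t\beta)$ acts on $V$ as the flow of the linear field $v\mapsto d(\beta_M)_x\,v$, while on $M$ it acts as the flow of $\beta_M=\grad\mup^\beta$; in general $h_0$ does not conjugate the two. The key structural input is that $\liep=\lieg\cap i\liu$ and that the $U^\C$-action is holomorphic: writing $\beta=i\xi$ with $\xi\in\liu$, one has $\beta_Z=J\,\xi_Z$, so the $\liep$-flows on $M$ are exactly the tangential parts of the $J$-rotations of the Killing fields $\xi_Z$ coming from $\liu$. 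I would use this $J$-relation, together with the $K$-equivariance already in place, to deform $h_0$ by a $K$-equivariant diffeomorphism tangent to the identity at $x$ into a map $h$ intertwining the whole family $\{\exp(t\beta)\}_{\beta\in\liep}$: concretely, one matches the two gradient flows by a Moser-type deformation, controlling the mismatch through the $J$-relation, exactly as carried out in \cite{heinzner-schwarz-Cartan,heinzner-schwarz-stoetzel}. This step is where essentially all the work lies, and it is the reason the \Keler hypothesis cannot be dropped; I expect it to be the principal obstacle.

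Finally, once a $G$-equivariant local diffeomorphism $h$ with $h(0)=x$ and $dh_0=\Id_V$ is produced near $0$, I would pass to genuinely $G$-stable domains. Since $\rho(G)$ is noncompact and need not preserve a ball, $S$ cannot be taken to be a ball; instead, using the $G$-equivariance of $h$ and the properness properties of the gradient map at the fixed point (the norm-square of $\mup-\mup(x)$ being proper along slices), one shrinks to an open $G$-stable neighbourhood $S\subset V$ of $0$ on which $h$ is an injective local diffeomorphism, and sets $\Omega:=h(S)$. Then $S$ is stable under the isotropy representation, $\Omega$ is a $G$-stable open neighbourhood of $x$, and $h:S\ra\Omega$ is the desired $G$-equivariant diffeomorphism with $h(0)=x$ and $dh_0=\Id_{T_xM}$, completing the proof.
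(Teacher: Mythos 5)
The first thing to say is that the paper does not prove this statement at all: it is introduced with ``We begin recalling the following slice theorem proved in \cite{heinzner-schwarz-Cartan,heinzner-schwarz-stoetzel}'', so the paper's ``proof'' is a citation. Your attempt is therefore a reconstruction, and as a reconstruction it has a genuine gap exactly where you yourself locate it. The Bochner step is fine: $\rho|_K$ is orthogonal for a $K$-invariant metric, isometries intertwine Riemannian exponentials, so $\exp_x^M$ restricted to a small ball gives a $K$-equivariant linearization. But the entire content of the theorem is the passage from $K$-equivariance to $G$-equivariance, and for that you offer only ``one matches the two gradient flows by a Moser-type deformation \dots exactly as carried out in \cite{heinzner-schwarz-Cartan,heinzner-schwarz-stoetzel}''. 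That is a pointer, not a proof, and the pointed-to mechanism is not obviously a Moser deformation on $M$: since one cannot average over the noncompact group $G$, the standard route in those references goes through the ambient complex manifold $Z$ --- one produces a $U$-equivariant (or $K$-equivariant) \emph{holomorphic} linearizing germ and invokes the identity principle to promote equivariance from the compact group to its complexification, then restricts to the $G$-stable set $M$. A purely real deformation argument on $M$ alone has no visible way of ``seeing'' the $\exp(\liep)$-directions, and you give no construction of the deformation, no statement of the cohomological or flow-matching equation it should solve, and no reason it is solvable.

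The final shrinking step is also more than a formality and is not actually carried out. Because $\rho(G)$ is noncompact, a $\rho(G)$-stable open neighbourhood of $0$ in $T_xM$ cannot be obtained by shrinking a ball: already for $\R^{>0}$ acting on $\R$ by scaling, the only invariant open neighbourhood of the origin is the whole line. So producing an open $G$-stable $S$ on which $h$ stays defined and injective requires controlling the full $G$-saturation of a small ball inside the domain of $h$, which is precisely where closedness/properness arguments from \cite{heinzner-schwarz-Cartan} enter; the sentence about ``the norm-square of $\mup-\mup(x)$ being proper along slices'' names the right kind of ingredient but does not constitute an argument. In short: the skeleton is reasonable and the first step is correct, but the two steps that carry all the weight are deferred to the literature rather than proved.
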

Fix $v\in \liep$.  The \emph{gradient flow} of a function
$f\in \cinf (M)$ is usually defined as the flow of the vector field
$-\grad f$.  Let $\{\phi_t\}$ denote the gradient flow of
$\mu^v$. Since $\grad \mu^v =\beta_M$, we have
$\phi_t(x) = \exp(tv)  x$. Then the function $\mup^v$ is a Morse-Bott function \cite{heinzner-schwarz-Cartan,heinzner-schwarz-stoetzel,heinzner-stoetzel-global}.
If we denote by  $c_1 > \cds > c_r$ the critical
  values of $\mu^v$, then the corresponding level sets of $\mup^v$, $C_i:=(\mu^v)\meno ( c_i)$ are submanifolds which are
  the components of $\Crit(\mu^v)$. By Theorem \ref{line} it follows that for any $x\in M$ the limit:
  \begin{gather*}
    \limes(x) : = \lim_{t\to -\infty} \phi_t(x) = \lim_{t\to +\infty }
    \exp(t\x ) \cd x,
  \end{gather*}
  exists.
Let us denote by $W_i$ the \emph{unstable manifold} of the critical component $C_i$
  for the gradient flow of $\mu^v$:
  \begin{gather}
    \label{def-wu}
    W_i := \{ x\in M: \limes (x) \in C_i \}.
  \end{gather}
   Then:
  \begin{gather}
    \label{scompstabile}
    M = \bigsqcup_{i=1}^r W_i,
  \end{gather}
and for any $i$ the map:
  \begin{gather*}
    \limes\restr{W_i} : W_i \ra C_i,
  \end{gather*}
  is a smooth fibration with fibres diffeomorphic to $\R^{l_i}$ where
  ${l_i}$ is the index (of negativity) of the critical submanifold
  $C_i$.
\begin{prop}\label{mpc}
Let $\nu$ be a polystable measure which is not stable. Hence there exist an abelian subalgebra $\lia\subset \lieg_\nu$ such that $\nu$ is supported on $M^{\lia}=\{x\in M:\, \xi_M (x)=0$ for any $\xi \in \lia\}$.
\end{prop}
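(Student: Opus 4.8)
The plan is to translate $\nu$ by a group element to a zero of $\fun$, read off a nonzero abelian algebra from the $\liep$-part of the (now compatible) stabilizer, use the gradient-flow character of $\exp(tv)$ to force all the mass onto the corresponding fixed-point set, and finally transport the conclusion back by $\Ad$.

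First I would use polystability to choose $g\in G$ with $\fun(g_*\nu)=0$ and put $\nu':=g_*\nu$. By Proposition \ref{compatible} the stabilizer $G_{\nu'}$ is compatible, so $\lieg_{\nu'}=\liek_{\nu'}\oplus\liep_{\nu'}$ with $\liek_{\nu'}\subset\liek$ and $\liep_{\nu'}=\lieg_{\nu'}\cap\liep$. Since non-stability is $G$-invariant, $\nu'$ is not stable; but if we had $\liep_{\nu'}=\{0\}$ then $\lieg_{\nu'}=\liek_{\nu'}$ would be a subalgebra of $\liek$ and, as $\nu'\in\fun^{-1}(0)$ is polystable, $\nu'$ would be stable by Definition \ref{stabilita}. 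Hence $\liep_{\nu'}\neq\{0\}$; this is precisely where the hypothesis ``not stable'' enters, and it is what guarantees the algebra $\lia$ produced below is nonzero.

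For the core step, fix $0\neq v\in\liep_{\nu'}$, so that $(\exp(tv))_*\nu'=\nu'$ for every $t$. I would examine $F(t):=\int_M\mup^v(\exp(tv)x)\,d\nu'(x)$: by this invariance and the change-of-variables formula \eqref{eq:pushforward}, $F(t)=\int_M\mup^v\,d\nu'$ is constant in $t$. Differentiating under the integral sign and using the defining property $\grad\mup^v=v_M$ recorded in Section \ref{gradient-map}, so that $\tfrac{d}{dt}\mup^v(\exp(tv)x)=\|v_M(\exp(tv)x)\|^2$, I obtain $0=F'(0)=\int_M\|v_M\|^2\,d\nu'$. As the integrand is continuous and nonnegative, $v_M=0$ holds $\nu'$-almost everywhere, i.e.\ $\nu'$ is supported on the closed set $M^v:=\{x\in M:v_M(x)=0\}$; this is exactly the argument invoked in the proof of Proposition \ref{P5M}. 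Choosing a maximal abelian subalgebra $\lia'\subset\liep_{\nu'}$ and applying this to each vector of a basis of $\lia'$, linearity of $\xi\mapsto\xi_M(x)$ shows that $\nu'$ is supported on $M^{\lia'}=\bigcap_{w\in\lia'}M^w$.

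Finally I would transport back by setting $\lia:=\Ad(g^{-1})(\lia')$, which is an abelian subalgebra of $\lieg_\nu=\Ad(g^{-1})\lieg_{\nu'}$. Since $\nu=(g^{-1})_*\nu'$ and $\xi_M(gy)=0$ if and only if $(\Ad(g^{-1})\xi)_M(y)=0$, one gets $g^{-1}(M^{\lia'})=M^{\lia}$, whence $\nu$ is supported on $M^{\lia}$. There is no serious analytic obstacle here: the interchange of differentiation and integration and the identity $\grad\mup^v=v_M$ are both immediate because $M$ is compact and $\mup$ is smooth. The hard part is not analytic but organizational, namely that neither the support nor the stabilizer is $G$-invariant, so the passage from $\nu'$ back to $\nu$ must go through the explicit conjugation above; I expect this bookkeeping, together with the verification that $\liep_{\nu'}\neq\{0\}$, to be the only real content of the argument.
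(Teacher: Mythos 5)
Your proof is correct, and it localizes the support of the measure by a genuinely different mechanism than the paper. The paper never moves the measure: it extracts the abelian algebra $\lia=\Ad(g)(\lia')\subset\lieg_\nu$ directly from Proposition \ref{polistabile-tits}, Lemma \ref{lemma-preliminare} and Theorem \ref{polystable-condition}, and then, for $u\in\lia$, combines the invariance $\exp(tu)_*\nu=\nu$ with the Morse--Bott limit map $\alpha(x)=\lim_{t\to-\infty}\exp(tu)x$ to get $\nu(A)=\nu(\alpha(A))$, so that $\nu$ charges only the critical components of $\mup^{u}$. You instead pass to the translate $\nu'=g_*\nu\in\fun^{-1}(0)$, where the relevant directions genuinely lie in $\liep$, and kill the mass off $M^v$ infinitesimally: constancy of $\int_M\mup^v(\exp(tv)x)\,d\nu'(x)$ together with $\grad\mup^v=v_M$ gives $\int_M\|v_M\|^2\,d\nu'=0$, which is exactly the $(P3)$/Lemma \ref{umpfi} argument already used in Proposition \ref{P5M}. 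Your route buys two things: it avoids the limit interchange implicit in the paper's $\nu(A)=\lim_n\nu(\exp(nu)A)=\nu(\alpha(A))$ (weak convergence alone does not give convergence on arbitrary measurable sets, so that step needs some care), and it sidesteps the slight imprecision of invoking ``the gradient flow of $\mup^{u}$'' for $u\in\Ad(g)(\lia')$, which need not lie in $\liep$; the price is the explicit conjugation bookkeeping at the end, which you carry out correctly via $M^{\lia}=g^{-1}(M^{\lia'})$. You also supply the observation, absent from the paper's proof, that non-stability forces $\liep_{\nu'}\neq\{0\}$, so the algebra $\lia$ you produce is nonzero.
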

\begin{proof}
By Proposition \ref{polistabile-tits}, Lemma \ref{lemma-preliminare} and Theorem \ref{polystable-condition}, $\lieg_\nu=\mathrm{Ad}(g)(\liek' \oplus \lieq)$, i.e., it is conjugate to a compatible subalgebra of $\lieg$ and $\partial_\infty G_\nu /K_\nu=Z(\nu)=g(e(S(\lieq)))$.

Let $\lia'\subset \lieq$ be a maximal abelian subalgebra of $\lieq$. Then $\lia=\mathrm{Ad}(g)(\lia')$ is an abelian subalgebra of $\lieg_\nu$ and $S(\lia)\subset Z(\nu)$.
Let $u\in \lia$. Then $\exp(tu) \in G_\nu$ and thus:
\[
\lim_{n\mapsto -\infty} \exp(n u)\nu =\nu.
\]
Let $A\subset M$ be a measurable subset. Then $\nu(A)=\lim_{n\mapsto -\infty} \nu(\exp (n u) (A))=\nu(\alpha (A))$, where $\alpha$ is the gradient flow of $\mup^{u}$.
Hence $\nu$ is supported on the critical submanifolds of $\mup^{u}$ for any $u \in \lia$. Hence $\nu$ is supported on $M^{\lia}$.
\end{proof}
Now, we explicitly compute the maximal weights.
\begin{teo} \label{calcolo-lambda} With the notation above we have
  \begin{gather*}
    \la_\nu(\e(-v)) = \sum_{i=1}^r c_i \nu(W_i).
  \end{gather*}
\end{teo}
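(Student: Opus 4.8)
The plan is to unwind the definition of the maximal weight for $\PsiM$, differentiate under the integral sign, and then recognise the limiting integrand through the Morse-Bott structure of $\mup^v$. First I would invoke formula \eqref{la-exp}, which applies since $(P6)$ holds for $\proba(M)$ when $M$ is compact, to write
\[
\la_\nu(\e(-v)) = \lim_{t\to+\infty}\frac{d}{dt}\PsiM(\nu,\exp(tv)).
\]
Inserting the definition $\PsiM(\nu,g)=\int_M\Psi^M(x,g)\,d\nu(x)$ and differentiating under the integral sign — legitimate because $M$ is compact and $\Psi^M(x,\cdot)$ is smooth — together with the identity $\frac{d}{dt}\Psi^M(x,\exp(tv))=\mup^v(\exp(tv)x)$ established in the proof that $\Psi^M$ satisfies $(P3)$, I would get
\[
\frac{d}{dt}\PsiM(\nu,\exp(tv)) = \int_M \mup^v(\exp(tv)x)\,d\nu(x).
\]

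Next I would study the pointwise limit of the integrand. Along the curve $t\mapsto\exp(tv)x$ one has $\frac{d}{dt}\mup^v(\exp(tv)x)=\langle\grad\mup^v,v_M\rangle=\|v_M\|^2\geq 0$, so $t\mapsto\mup^v(\exp(tv)x)$ is non-decreasing and bounded above by $\max_M\mup^v<\infty$; hence its limit exists for every $x$. By the Linearization Theorem \ref{line} this limit equals $\mup^v(\limes(x))$, where $\limes(x)=\lim_{t\to+\infty}\exp(tv)x\in\Crit(\mup^v)$. If $x$ lies in the unstable manifold $W_i$ of \eqref{def-wu}, then $\limes(x)\in C_i=(\mup^v)\meno(c_i)$, so the limit equals the critical value $c_i$. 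By the decomposition $M=\bigsqcup_{i=1}^r W_i$ of \eqref{scompstabile}, the integrand therefore converges pointwise to $\sum_{i=1}^r c_i\,\mathbf{1}_{W_i}$.

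Finally, since the integrands are uniformly bounded by $\max_M|\mup^v|$ and $\nu$ is a finite measure, the dominated convergence theorem lets me interchange limit and integral, yielding
\[
\la_\nu(\e(-v)) = \int_M \sum_{i=1}^r c_i\,\mathbf{1}_{W_i}(x)\,d\nu(x) = \sum_{i=1}^r c_i\,\nu(W_i).
\]
The routine analytic steps (differentiation under the integral and the passage to the limit) are unproblematic thanks to compactness; the point that needs the most care is the pointwise convergence of $\mup^v$ along the flow to the critical value $c_i$ on each stratum $W_i$, which rests on the existence of $\limes(x)$ guaranteed by Theorem \ref{line}, and on the Borel measurability of the unstable manifolds $W_i$ so that the quantities $\nu(W_i)$ are well defined.
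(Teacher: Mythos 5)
Your proposal is correct and follows essentially the same route as the paper's proof: express $\la_\nu(\e(-v))$ via differentiation under the integral sign as $\lim_{t\to+\infty}\int_M \mup^v(\exp(tv)x)\,d\nu(x)$, pass the limit inside by dominated convergence using the boundedness of $\mup^v$, and evaluate the pointwise limit on each unstable manifold $W_i$ as the critical value $c_i$ via the existence of $\limes(x)$ from the Linearization Theorem. Your additional remarks on the monotonicity of $t\mapsto\mup^v(\exp(tv)x)$ and the measurability of the $W_i$ are sound elaborations of steps the paper leaves implicit.
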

We give a sketch of the proof, which follows essentially that of \cite[Th. 5.23]{bgs}.

\begin{proof}
 By definition of $\la_\nu$ and by
  differentiating under the integral sign we get
  \[
  \begin{split}
    \la_\nu (\e (-v))
    &=\lim_{t\to +\infty} \desudt \int_M \Psi^M (x, \exp(tv)) d\nu(x)\\
    &=\lim_{t\to +\infty} \int_M \Bigl ( \desudt \Psi^M (x, \exp(tv))
    \Bigr ) d\nu(x) .
  \end{split}
  \]
Applying the dominated convergence theorem, since $\desudt \bigg \vert _{t=t_o} \Psi^M (x, \exp(t\x ) ) = \mup^\x
    (\exp(t_0 \x ) \cd x)$
and  $\mup^v$ is bounded, we get
  \[
  \begin{split}
  \la_\nu (\e (-v)) &= \lim_{t\to +\infty} \int_M \mup^v (\exp
    (tv)\cd x) d\nu(x) \\
    % \la_\nu (\e (-v))
    &= \int_M \mu^v (\alfa(x)) d\nu(x)
    = \sum_{i=1}^r \int_{W_i} \mup^v (\alfa(x)) d\nu(x) .
  \end{split}
\]
Since for $x\in W_i$, $\alfa(x) \in C_i$ and so $\mup^v(\alfa(x)) =
  c_i$, we finally obtain:
  \begin{gather*}
  \la_\nu (\e (-v))=  \sum_{i=1}^r c_i \nu(W_i).
  \end{gather*}
\end{proof}
Let $E(\mup)$ denote the convex hull of $\mup(M)\subset \liep$, i.e. a $K$-invariant convex body in $\liep$.
Let $\lia\subset \liep$ be a abelian subalgebra and let $\pi:\liep \lra \lia$ be the orthogonal
projection onto $\lia$. Then $\mua=\pi \circ \mup$ is the gradient map associated to $A=\exp(\lia)$. Denote by $P=\mua(M)$. It is well-known that $P$ is a finite union of polytopes \cite{heinzner-schutzdeller}.
In the sequel we always assume that $P$ is a polytope, hypothesis which holds e.g.
if $G=U^\C$ and $M$ is a complex connected submanifold by the
Atiyah-Guillemin-Sternberg convexity theorem
\cite{atiyah-commuting,guillemin-sternberg-convexity-1} or when $M$ is an irreducible semi-algebraic subset of a Hodge manifold
$Z$ \cite{bghc,heinzner-schutzdeller,kostant-convexity}. We point out that the convex bodies $E(\mup)$ and $P$ are strongly related \cite{bgh-israel-p}.
Observe that under such hypothesis for any $v \in \liep$, a local maxima of  $\mup^v$ is a global maxima. Then the Morse-Bott decomposition of $M$ with respect to $\mup^v$, i.e., $M = \bigsqcup_{i=0}^r W_i$,
has a unique open and dense unstable manifold $W_r^u$ and the others unstable manifolds are proper submanifolds. Therefore, if $\nu$ is a smooth measure of $M$ then $W_r^u$ has full measure and so
$
\la_\nu (\e (-v))=  c_r =\mathrm{max}_{x\in M} \mup^v.
$
Summing up we have proved the following result.
\begin{cor}\label{misure-liscie-lambda}
If $\nu$ is a smooth measure on $M$, then for any $v\in \liep$:
\[
\la_\nu ( \e(-v) )=\mathrm{max}_{x\in M} \mup^v.
\]
\end{cor}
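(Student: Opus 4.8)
The plan is to deduce the corollary directly from the closed formula for the maximal weight established in Theorem \ref{calcolo-lambda}, together with the standing polytope hypothesis and the regularity of smooth measures. Recall that that theorem gives
\[
\la_\nu(\e(-v)) = \sum_{i=1}^r c_i\,\nu(W_i),
\]
where $c_1 > \cdots > c_r$ are the critical values of $\mu^v$ and $W_i$ is the unstable manifold of the critical component $C_i = (\mu^v)^{-1}(c_i)$ for the gradient flow of $\mu^v$. Thus the entire question reduces to determining how $\nu$ distributes its mass among the strata $W_i$.

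First I would exploit the assumption (in force throughout this part of the section) that $P = \mu_\lia(M)$ is a polytope, applied to $\lia = \R v$. As recalled just before the statement, this forces every local maximum of $\mu^v$ to be a global maximum. Consequently, in the Morse-Bott decomposition $M = \bigsqcup_{i=1}^r W_i$ there is a single unstable manifold of top dimension, namely the one attached to the largest critical value $c = \max_{x\in M}\mu^v$; this stratum is open and dense, and every other unstable manifold is a proper submanifold of positive codimension.

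Next I would invoke that $\nu$ is a smooth measure, hence absolutely continuous with respect to the Riemannian volume of $M$. A finite union of submanifolds of positive codimension has volume zero, and is therefore $\nu$-null; so $\nu(W_i) = 0$ for every non-top stratum. Since $\nu$ is a probability measure whose support, up to a null set, lies in the single open dense stratum, that stratum carries full mass $1$. Substituting into the formula above collapses the sum to its one surviving term, $c\cdot 1 = \max_{x\in M}\mu^v$, which is precisely the assertion.

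The one point that requires care is the step from \emph{open and dense} to \emph{full measure}: topological density alone does not guarantee a conull set, so the argument genuinely relies on the complementary strata being submanifolds of positive codimension rather than merely nowhere dense. This structural input is exactly what the reduction ``local maximum $=$ global maximum'' supplies, via the Morse-Bott description of $\mu^v$ under the polytope hypothesis; once it is granted, both the vanishing of the lower weights and the collapse of the weighted sum are routine.
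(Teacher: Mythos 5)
Your proposal is correct and follows essentially the same route as the paper: both deduce the statement from Theorem \ref{calcolo-lambda} by observing that, under the polytope hypothesis, local maxima of $\mup^v$ are global, so the Morse--Bott decomposition has a single open dense unstable stratum while the remaining strata are submanifolds of positive codimension, hence null for a smooth measure. Your explicit remark that absolute continuity (not mere density) is what gives the top stratum full measure is exactly the point the paper uses implicitly.
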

Since $\nu$ is a probability measures, it follows that $\fun (\nu) \in E(\mup)$. Indeed, $\fun (\nu)$ is the barycenter of the gradient map $\mup$ with respect to $\nu$ and so it lies in $E(\mup)$.  If $0\notin E(\mup)$, then there exists $v\in E(\mup)$ realizing the minimum distance of $E(\mup)$ to the origin. Moreover $v$ is a $K$ fixed point due to the fact that $E(\mup)$ is $K$-invariant. Hence  up to shifting the gradient map we may assume that $0\in E(\mup)$. Under this assumption we get the following result.
\begin{prop}\label{measure-semistable}
If $0\in E(\mup)$ then any smooth measure on $M$ is semi-stable.
\end{prop}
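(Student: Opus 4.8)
The plan is to reduce everything to the semistability criterion of Theorem \ref{semi-stable-abstract}, applied with $\spaz = \proba(M)$, which is compact by the discussion at the start of Section \ref{sez-misure}. That theorem says a smooth measure $\nu$ is semistable if and only if $\la_\nu \geq 0$ on the whole Tits boundary $\tits$. Since the map $\e : S(\liep) \ra \tits$ is a bijection, it suffices to verify $\la_\nu(\e(w)) \geq 0$ for every unit vector $w \in S(\liep)$.

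The key input is Corollary \ref{misure-liscie-lambda}, which for a smooth measure $\nu$ and any $v \in \liep$ gives $\la_\nu(\e(-v)) = \max_{x \in M} \mup^v(x)$. Writing an arbitrary boundary point as $\e(-v)$ with $v \in S(\liep)$ (so that $-v$ ranges over all of $S(\liep)$ as $v$ does), the inequality to be proved becomes $\max_{x \in M} \mup^v(x) \geq 0$ for every $v \in \liep$.

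This is precisely where the hypothesis $0 \in E(\mup)$ enters. The function $\mup^v(x) = \langle \mup(x), v \rangle$ is the restriction to $M$ of a linear functional on $\liep$, and the supremum of a linear functional over the convex hull $E(\mup)$ of $\mup(M)$ coincides with its supremum over the generating set $\mup(M)$; since $M$ is compact and $\mup$ is continuous this supremum is attained, so $\max_{x \in M} \mup^v = \max_{y \in E(\mup)} \langle y, v \rangle$. Because $0 \in E(\mup)$, this maximum is at least $\langle 0, v \rangle = 0$. Hence $\la_\nu(\e(-v)) \geq 0$ for every $v$, so $\la_\nu \geq 0$ on all of $\tits$, and Theorem \ref{semi-stable-abstract} yields semistability of $\nu$.

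I do not anticipate a genuine obstacle: the argument is a direct combination of the semistability criterion, the explicit maximal-weight formula for smooth measures, and the elementary fact that a point of a convex body is nonnegatively paired with at least one point of the body by every linear functional. The only step deserving care is the bookkeeping of the signs in $\e(\pm v)$ together with the observation that the supremum of a linear functional over a convex hull is controlled by its generating points, which is exactly what matches the convexity hypothesis on $E(\mup)$ to the formula of Corollary \ref{misure-liscie-lambda}.
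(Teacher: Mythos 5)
Your proof is correct and follows essentially the same route as the paper: both invoke Corollary \ref{misure-liscie-lambda} to identify $\la_\nu(\e(-v))$ with $\max_{x\in M}\mup^v$ and then use $0\in E(\mup)$ to conclude nonnegativity, finishing with Theorem \ref{semi-stable-abstract}. The only difference is that you spell out the (correct) elementary step that the maximum of the linear functional $\langle\cdot,v\rangle$ over $\mup(M)$ equals its maximum over the convex hull $E(\mup)$, which the paper leaves implicit.
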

\begin{proof}
Let $v\in \liep$. By the above corollary, we have
$\la_\nu (\e(-v))=\mathrm{max}_{x\in M} \mup^v.$ Since $0\in E(\mup)$, it follows that $\la_\nu ( \e(-v) )=\mathrm{max}_{x\in M} \mup^v \geq 0$. By Theorem \ref{semi-stable-abstract} $\nu$ is semi-stable.
\end{proof}
\begin{cor}\label{stable-semi-stable-dense}
  If $0\in E(\mup) $, then the set
  $\proba_{ss} (M) :=\{\nu\in \proba (M):\, \nu$ is semi-stable$\}$
  is dense in $\pb M$. Moreover, if $0$ lies in the interior of $E(\mup)$ then  the set
  $\proba_{s} (M) :=\{\nu\in \proba (M):\, \nu\ \mathrm{is\ stable} \}$ is open and dense.
\end{cor}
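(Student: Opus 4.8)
The plan is to derive both density statements from a single approximation fact together with the explicit value of the maximal weight of a smooth measure given in Corollary \ref{misure-liscie-lambda}. The approximation fact I would use is standard: since $M$ is a compact manifold, the probability measures of the form $f\,\dvol$ with $f\in\cinf(M)$, $f\geq 0$ and $\int_M f\,\dvol=1$ (for a fixed smooth volume density $\dvol$) are weak-$*$ dense in $\pb M$, because the finitely supported probability measures are weak-$*$ dense, each Dirac mass is a weak-$*$ limit of smooth probability measures concentrating at a point, and convex combinations of smooth measures are smooth. Granting this, the first assertion is immediate: under the hypothesis $0\in E(\mup)$, Proposition \ref{measure-semistable} shows that every smooth measure is semi-stable, so $\proba_{ss}(M)$ contains a dense set and is therefore dense in $\pb M$.

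For the second assertion the openness of $\proba_s(M)$ is quoted directly from Corollary \ref{stable-open-abstract-setting}. For the density it suffices, by the same approximation, to prove that every smooth measure $\nu$ is stable whenever $0\in\inte E(\mup)$. Recalling from the definitions that $\La(\nu,v)=\la_\nu(\e(-v))$, Corollary \ref{misure-liscie-lambda} gives, for every $v\in\liep$,
\[
\La(\nu,v)=\max_{x\in M}\mup^v(x)=\max_{x\in M}\sx\mup(x),v\xs ,
\]
which is exactly the support function of the convex body $E(\mup)$ evaluated at $v$.

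The problem thereby reduces to the elementary convex-geometric statement that the support function of $E(\mup)$ is strictly positive on $\liep\setminus\{0\}$ precisely when $0\in\inte E(\mup)$: if $B_\eps(0)\subset E(\mup)$, then for every $v\neq 0$ one has $\max_{x\in M}\sx\mup(x),v\xs\geq\sx\eps v/\|v\|,v\xs=\eps\|v\|>0$. Hence $\La(\nu,v)>0$ for all $v\in\liep\setminus\{0\}$, and Theorem \ref{stabile} yields that $\nu$ is stable; the density of $\proba_s(M)$ then follows from the density of the smooth measures. The substance of the argument is the identification of $\La(\nu,\cdot)$ with the support function of $E(\mup)$, after which both stability and semi-stability of smooth measures translate into the position of $0$ relative to $E(\mup)$; I expect the only point requiring genuine care to be the density of smooth measures in the weak topology, the convex-geometric step and the openness being routine or directly cited.
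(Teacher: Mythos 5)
Your proposal is correct and follows essentially the same route as the paper: smooth measures are weak-$*$ dense, Proposition \ref{measure-semistable} (via Corollary \ref{misure-liscie-lambda}) gives semi-stability of smooth measures when $0\in E(\mup)$, and when $0\in\inte E(\mup)$ the identity $\La(\nu,v)=\max_{x\in M}\mup^v$ forces $\La(\nu,\cdot)>0$ on $\liep\setminus\{0\}$, so Theorem \ref{stabile} gives stability, with openness quoted from Corollary \ref{stable-open-abstract-setting}. The only difference is presentational: you make explicit the identification of $\La(\nu,\cdot)$ with the support function of $E(\mup)$ and the $\eps$-ball estimate, where the paper simply observes that $\mup^v$ changes sign and hence has strictly positive maximum.
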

\begin{proof}
By the above Proposition any smooth measure is semi-stable. Since smooth measures are
dense, then the set of semi-stable measures is dense. If $0$ belongs to the interior of the $E(\mup)$, then for any $v\in \liep$ the function $\mup^v$ change sign and so it has a strictly positive maxima.
By Corollary   \ref{misure-liscie-lambda} $\la_\nu ( \e(-v))>0$ and  by Theorem \ref{stabile} we get that it is stable. Since by Corollary \ref{stable-open-abstract-setting} the set of the stable points is also open, it means $\proba_s (M)$ is open and dense.
\end{proof}
\section{Measures on real projective spaces}\label{misure-proiettivo}

In the recent paper \cite{bgs} the authors completely describe stable, semi-stable and polystable measures on  complex projective spaces (see also \cite{donaldson-numerical-results,millson-zombro}).
Here we consider the real projective space:
$$
\mathds{P}^n(\mathds{R})=\frac{\mathds{R}^{n+1}\setminus \{0\}}{\sim}=\frac{\mathds{S}^n}{\{\pm {\rm Id}_{n+1}\}},
$$
where we denote by ${\rm Id}_{n+1}$ the identity matrix of order $n+1$. Consider on $\mathds{P}^n(\mathds{R})$ the action of ${\rm SL}(n+1,\mathds{R})$  and recall that its Lie algebra $\mathfrak{sl}(n+1)$ decomposes as $\mathfrak{sl}(n+1)=\mathfrak{k}\oplus\mathfrak{p}=\mathfrak{so}(n+1)\oplus{\rm sym}_0(n+1)$.
% Identifying $ {\rm sym}_0(n+1)$ with $\mathds{R}^{n+1}$ and through the orthogonal decomposition $\mathds{R}^{n+1}=V_1\oplus\dots\oplus V_{k}$, we have $\xi=(\lambda_1,\dots,\lambda_k) $
A gradient moment map for this action is given by:
$$
\mu_{\mathfrak p}\!:\mathds{P}^n(\mathds{R})\rightarrow \mathfrak p,\quad \mu_{\mathfrak p}([x])=\frac12\left[\frac{xx^T}{|x|^2}-\frac1{n+1}{\rm Id}_{n+1}\right].
$$
Observe that ${\rm sym}_0(n+1)$ admits the maximal abelian subalgebra $\mathfrak a$ of traceless diagonal matrices, which we identify with $\mathds{R}^n\subset \mathds{R}^{n+1}$. Given an element $\xi\in {\rm sym}_0(n+1)$, let $\lambda_1>\dots> \lambda_k$ be its eigenvalues and denote by $V_1,\dots, V_k$ the corresponding eigenspaces.  In view of the orthogonal decompositions $\mathds{R}^{n+1}=V_1\oplus\dots\oplus V_{k}$ we can write $x\in \mathds{R}^{n+1}$ as $x=x_1+\dots+x_k$ with $x_j\in V_j$, $j=1,\dots, k$. With this notation we have:
$$
\mu_{\mathfrak a}^\xi([x])=\frac12\frac{\lambda_1|x_1|^2+\dots+\lambda_{k}|x_{k}|^2}{|x_1|^2+\dots+|x_k|^2},
$$
where $\langle\cdot,\cdot\rangle$ is the dual pairing. Consider the projection $\pi\!: \mathds{R}^{n+1}\setminus\{0\}\rightarrow \mathds{P}^n(\mathds{R})$. Since $(d\pi)_x\left(\xi_{\mathds{R}^{n+1}\setminus\{0\}}(x)\right)=\xi_{\mathds{P}^n(\mathds{R})}$ and $\xi_{\mathds{R}^{n+1}\setminus\{0\}}(x)=\lambda_1x_1+\dots+\lambda_kx_k$, one has $\xi_{\mathds{P}^n(\mathds{R})}\equiv 0$ iff $\xi_{\mathds{R}^{n+1}\setminus\{0\}}(x)$ is parallel to $x$, i.e. iff $x=x_j$ for some $j=1,\dots, k$. Thus,
critical points of $\mu_{\mathfrak p}^\xi$ are given by ${\rm Crit}(\mu^\xi_{\mathfrak p})=\mathds{P}(V_1)\cup\dots\cup \mathds{P}(V_{k})$ and critical values are $c_j=\frac12\lambda_j$, $j=1,\dots, k$.

 In order to describe:
$$
W_j^\xi=\{ [x]\in \mathds{P}^n(\mathds{R}): \limes ([x]) \in C_j \},
$$
 for $j=1,\dots, n+1$, where by definition:
$$
 \limes([x]) = \lim_{t\to +\infty}  \exp(t\xi )  x,
$$
observe that:
$$
 \exp(t\xi )  x=[\exp(t\lambda_1)x_1+\dots+\exp(t\lambda_{k})x_{k}],
$$
which implies:
\begin{equation}
%\begin{split}
 \limes([x]) =\lim_{t\to +\infty}[ \exp(t\lambda_1)x_1+\dots+\exp(t\lambda_{k})x_{k}]=\begin{cases} [x_1]\  {\textrm{if}}\ \ x_1\neq 0;\\
[x_2]\  {\textrm{if}}\ \ x_1=0, x_2\neq 0;\\
 \quad\vdots\\
 [x_k] \ {\textrm{otherwise}.}
 \end{cases}\nonumber
% \end{split}\nonumber
\end{equation}
Thus, since $[x]\in W_j^\xi$ iff $\limes([x])\in \mathds{P}(V_j)$ we have:
$$
W_1^\xi=\mathds{P}^{n}(\mathds{R})\setminus \mathds{P}(V_2\oplus\dots\oplus V_k),
$$
$$
W_2^\xi=\mathds{P}(V_2\oplus\dots\oplus V_k)\setminus \mathds{P}(V_3\oplus\dots\oplus V_k),
$$
$$
\vdots
$$
$$
W_{k-1}^\xi=\mathds{P}(V_{k-1}\oplus V_k)\setminus \mathds{P}(V_k).
$$
$$
W_k^\xi=\mathds{P}(V_k).%=\mathds{P}(V_1\oplus\dots\oplus V_k)\setminus V_1\oplus \dots\oplus V_{k-1}.
$$

%be such that $ \limes(X)= [0:\dots :0:y_i:0:\dots:0]$. Then, $X\in W_j^\xi$ if and only if $  \limes(X)\in C_j$ if and only if
%$$
%\mu_{\mathfrak a}^\xi( [0:\dots :0:y_i:0:\dots:0])=\frac12\lambda_j,
%$$
%i.e. if and only if:
%$$
%\lambda_i|y_i|=\lambda_j(|y_i|^2+\dots+|y_k|^2).
%$$
%Since $\lambda_1<\dots<\lambda_k$, we get:
%

%Let us denote by $\mathds{P}_r=\mathds{P}(V_r\oplus\dots\oplus V_k)$.
By Theorem \ref{calcolo-lambda} it follows:
\begin{equation}\label{lambdaxi}
\begin{split}
\lambda_\nu(e(-\xi))=&\frac12\left(\sum_{j=1}^r\lambda_j\nu (W_j^\xi)\right)\\
=&\frac12\left(\lambda_1-(\lambda_1-\lambda_2)\nu(\mathds{P}(V_2\oplus\dots\oplus V_k))-\dots-(\lambda_{k-1}-\lambda_k)\nu(\mathds{P}(V_k))\right).
\end{split}
\end{equation}

In the following two examples we develop in details the cases $n=1$ and $n=2$.

\begin{Example}\rm
Let $n=1$.
We have $\xi=(\lambda_1,-\lambda_1)$ and $\mathds{R}^2=V_1\oplus V_2$.
Denote $p_i=\mathds{P}(V_i)$ for $i=1,2$. Then, ${\rm Crit}(\mu^\xi)=\{p_1,p_2 \}$. If {we denote $x=x_1+x_2$ as before, we have:
\begin{equation}
%\begin{split}
 \limes(X) =\lim_{t\to +\infty} [\exp(t\lambda_1)x_1+\exp(t\lambda_{2})x_{2}]=\begin{cases} p_1\  {\textrm{if}}\ \ x_1\neq 0;\\
p_2 \  {\textrm{if}}\ \ x_1=0,
 \end{cases}\nonumber
% \end{split}\nonumber
\end{equation}
which implies:
$$
W_1^\xi=\mathds{P}^1(\mathds{R})\setminus p_2,\ \  W_2^\xi=p_2.
$$
It follows that:
$$
\lambda_\nu(e(-\xi))=\frac{\lambda_1}2(1-2\nu (p_2)).
$$
Thus $\nu$ is stable iff for any $p\in \mathds{P}^1(\mathds{R})$:
$$
\nu (p)<\frac12,
$$
semistable iff for any $p\in \mathds{P}^1(\mathds{R})$:
$$
\nu (p)\leq \frac12,
$$
polystable but not stable  iff $\nu$ is only supported by two points, i.e.:
$$
\nu=\frac12 \delta_{1}+\frac12 \delta_{2}.
$$
Indeed, If $\nu$ is polystable, by Corollary \ref{cor-polystable}, there exists $\xi \in \liep$ such that $\exp(t\xi)\in \mathrm{SL}(2,\R)_\nu$, $\nu$ is supported by two points $p_1$ and $p_2$ and by:
\[
0=\lambda_\nu(e(-\xi))=\frac{\lambda_1}2(1-2\nu (p_2))
\]
it follows $\nu=\frac12 \delta_{p_1}+\frac12 \delta_{p_2}$. Vice-versa, if $\nu=\frac12 \delta_{p_1}+\frac12 \delta_{p_2}$ with $p_1 \neq p_2$, then there exists $g\in \mathrm{SL}(2,\R)$ such that $gp_1=[1:0]$ and $gp_2=[0:1]$. It is easy to check that
$$
\fun (g\nu)=\frac12(\mup([1:0])-\mup([0:1])=0,
$$
proving $\nu$ is polystable.
}
\end{Example}
\begin{Example}\rm
Let $n=2$. We have three cases:
\begin{enumerate}
\item[($a$)] $\xi=(\lambda_1,\lambda_2,\lambda_3)$, with $\lambda_3=-\lambda_1-\lambda_2$, $\mathds{R}^3=V_1\oplus V_2\oplus V_3$, $\dim(V_j)=1$;
\item[($b$)] $\xi=(\lambda_1,-\frac12\lambda_1,-\frac12\lambda_1)$ and $\mathds{R}^3=V_1\oplus V_2$, where $\dim(V_1)=1$, $\dim(V_2)=2$;
\item[($c$)]  $\xi=(\lambda_1,\lambda_1,-2\lambda_1)$ and $\mathds{R}^3=V_1\oplus V_2$, where $\dim(V_1)=2$, $\dim(V_2)=1$.
\end{enumerate}
Let us deal first with the case ($a$). Denote $p_i=\mathds{P}(V_i) \subset \mathds{P}^2 (\R)$ for $i=1,2,3$ and let $\xi=(\lambda_1,\lambda_2,\lambda_3)$. Then ${\rm Crit}(\mu^\xi_{\mathfrak p})=\{p_1,p_2,p_3\}$ and:
\begin{equation}
%\begin{split}
 \limes(x) =\lim_{t\to +\infty} [\exp(t\lambda_1)x_1+\exp(t\lambda_{2})x_{2}+\exp(t\lambda_{3})x_{3}]=\begin{cases} p_1\  {\textrm{if}}\ \ x_1\neq 0;\\
p_2\  {\textrm{if}}\ \ x_1=0,\, y_2\neq 0;\\
p_3\  {\textrm{if}}\ \ [x]=p_3,
 \end{cases}\nonumber
% \end{split}\nonumber
\end{equation}
and
$$
W_1^\xi=\mathds{P}^2(\mathds{R})\setminus \mathds{P}(V_2\oplus V_3),\ \  W_2^\xi=\mathds{P}(V_2\oplus V_3)\setminus p_3,\ \ W_3^\xi=p_3.
$$
It follows that:
 \begin{equation}
\begin{split}
\lambda_\nu(e(-\xi))=&\frac{\lambda_1}2-\frac{\lambda_1-\lambda_2}2\nu (\mathds{P}(V_2\oplus V_3))-\frac{2\lambda_2+\lambda_1}2\nu (p_3)\\
=&\frac{\lambda_1}2\left(1-\left(1-\frac{\lambda_2}{\lambda_1}\right)\nu (\mathds{P}(V_2\oplus V_3))-\left(2\frac{\lambda_2}{\lambda_1}+1\right)\nu (p_3)\right).
\end{split}\nonumber
\end{equation}
Observe that from $\lambda_1>\lambda_2>-\lambda_1-\lambda_2$ we get $-1/2<{\lambda_2}/{\lambda_1}<1$.

For the case ($b$), namely for $\xi=(\lambda_1,-\frac12\lambda_1,-\frac12\lambda_1)$, we have ${\rm Crit}(\mu^\xi)=\{p_1\}\cup\mathds{P}(V_2)$,
\begin{equation}
%\begin{split}
 \limes(x) =\lim_{t\to +\infty} [\exp(t\lambda_1)x_1+\exp(t\lambda_{2})x_{2}]=\begin{cases} p_1\  {\textrm{if}}\ \ x_1\neq 0;\\
[0:y_2]\  {\textrm{if}}\ \ x_1=0,
 \end{cases}\nonumber
% \end{split}\nonumber
\end{equation}
and
$$
W_1^\xi=\mathds{P}^2(\mathds{R})\setminus\mathds{P}(V_2)=p_1,\ \  W_2^\xi=\mathds{P}(V_2)=\mathds{P}^2(\mathds{R})\setminus\{p_1\}.
$$
It follows that:
$$
\lambda_\nu(e(-\xi))=\lambda_1\left(\frac{1}4-\frac{3}4\nu(p_1)\right).
$$

Finally, when $\xi=(\lambda_1,\lambda_1,-2\lambda_1)$, ${\rm Crit}(\mu^\xi)=\mathds{P}(V_1)\cup\{p_3\}$,
\begin{equation}
%\begin{split}
 \limes(x) =\lim_{t\to +\infty} [\exp(t\lambda_1)x_1+\exp(t\lambda_{2})x_{2}]=\begin{cases} [x_1]\  {\textrm{if}}\ \ x_1\neq 0;\\
p_3\  {\textrm{if}}\ \ x_1=0,
 \end{cases}\nonumber
% \end{split}\nonumber
\end{equation}
and
$$
W_1^\xi=\mathds{P}^2(\mathds{R})\setminus\{p_3\},\ \  W_2^\xi=\mathds{P}(V_2)=\{p_3\}.
$$
It follows that:
$$
\lambda_\nu(e(-\xi))=\frac{\lambda_1}2\left(1-3\,\nu (p_3)\right).
$$

Denote by $\rm Li\subset \mathds{R}^3$ a linear subspace of $\mathds{R}^3$ of dimension $2$ and let $p\in \mathds{P}^2(\mathds{R})$.
Then, $\nu$ is stable iff for any choice of ${\rm Li}$ and $p$:
$$
\nu(\mathds{P}({\rm Li}))<\frac23,\quad \nu(p)<\frac13,
$$
$\nu$ is semistable iff for any choice of ${\rm Li}$ and $p$:
$$
\nu(\mathds{P}({\rm Li}))\leq\frac23,\quad \nu(p)\leq\frac13,
$$
and $\nu$ is polystable iff either it is stable or it is one of the following:
$$
\nu:=\frac23\delta_{\mathds{P}({\rm Li})}+\frac13\delta_p,\qquad \nu:=\frac{1}3\delta_{1}+\frac{1}3\delta_{2}+\frac{1}3\delta_{3},
$$
i.e. it is supported by some $\mathds{P}({\rm Li})$ and by a point $p$ or by three points (see the proof of Prop. \ref{projstab} below for details).
%
%polystable iff either it is stable or $\nu$ is supported on three points, i.e.:
%$$
%\nu:=\frac{1}3\delta_{1}+\frac{1}3\delta_{2}+\frac{1}3\delta_{3}.
%$$
%
%
%Thus, as in the previous case, $\nu$ is stable iff for any $p\in \mathds{P}^2(\mathds{R})$:
%$$
%\nu(p)<\frac{1}3,
%$$
%semistable iff for any $p\in \mathds{P}^2(\mathds{R})$:
%$$
%\nu(p)\leq\frac{1}3,
%$$
%polystable iff either it is stable or $\nu$ is supported on three points, i.e.:
%$$
%\nu:=\frac{1}3\delta_{1}+\frac{1}3\delta_{2}+\frac{1}3\delta_{3}.
%$$

\end{Example}
We conclude with the following proposition which states necessary and sufficients conditions for stability and polystability in general dimension.
\begin{prop}\label{projstab}
The measure $\nu$ is stable iff for any choice of a linear subspace ${\rm Li}\subset \mathds{R}^{n+1}$:
$$
\nu(\mathds{P}({\rm Li}))<\frac{\dim({\rm Li})}{n+1},
$$
$\nu$ is semistable iff:
$$
\nu(\mathds{P}({\rm Li}))\leq\frac{\dim({\rm Li})}{n+1}.
$$
The measure  $\nu$ is polystable iff there exists a splitting $\R^{n+1}={\rm Li_1} \oplus \cdots \oplus {\rm Li_r}$ such that $\nu$ is supported on $\mathds{P}({\rm Li}_1)\cup \cdots \cup \mathds{P}({\rm Li}_r)$. Moreover
$$
\nu:=\sum_{j}^r\frac{\dim({\rm Li}_j)}{n+1}\delta_{\mathds{P}({\rm Li}_j)},
$$
where $\delta_{\mathds{P}({\rm Li}_j)}$ is a stable measure of $\mathds{P}({\rm Li}_j)$ with respect to $\rm{SL}({\rm Li}_j )$.
\end{prop}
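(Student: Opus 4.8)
The whole proof runs through the explicit maximal weight formula (\ref{lambdaxi}) and the numerical criteria already established: Theorem \ref{stabile} for stability, Theorem \ref{semi-stable-abstract} for semi-stability, and Theorem \ref{polystable-condition} together with Corollary \ref{cor-polystable} and Proposition \ref{mpc} for polystability. Fix $\xi\in\liep=\mathrm{sym}_0(n+1)$ with eigenvalues $\lambda_1>\cdots>\lambda_k$, eigenspaces $V_1,\dots,V_k$, and write $W_j:=V_j\oplus\cdots\oplus V_k$; by (\ref{lambdaxi}) and $\Lambda(\nu,\xi)=\la_\nu(\e(-\xi))$ one has $2\la_\nu(\e(-\xi))=\lambda_1-\sum_{j=2}^k(\lambda_{j-1}-\lambda_j)\nu(\mathds{P}(W_j))$. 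For the necessity of the numerical conditions, given a subspace $\mathrm{Li}$ of dimension $d$ I would test on the $\xi$ with the two eigenspaces $\mathrm{Li}^{\perp}$ and $\mathrm{Li}$ and traceless eigenvalues $\lambda_1>\lambda_2$: then $2\la_\nu(\e(-\xi))$ is a positive multiple of $d-(n+1)\nu(\mathds{P}(\mathrm{Li}))$, so $\la_\nu\geq0$ (respectively $>0$) forces $\nu(\mathds{P}(\mathrm{Li}))\leq d/(n+1)$ (respectively $<$).

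For sufficiency I would bound the formula directly. Each $W_j$ with $j\geq 2$ is a proper nonzero subspace, so the hypothesis gives $\nu(\mathds{P}(W_j))\leq \dim W_j/(n+1)$; since $\lambda_{j-1}-\lambda_j>0$ and, because $\mathrm{tr}\,\xi=0$, the Abel summation identity $\sum_{j=2}^k(\lambda_{j-1}-\lambda_j)\dim W_j=(n+1)\lambda_1$ holds, one obtains $2\la_\nu(\e(-\xi))\geq\lambda_1-\lambda_1=0$. In the stable case the strict subspace inequalities and the presence of the nonzero term $j=2$ make this strict, giving $\Lambda(\nu,\xi)>0$ for all $\xi\neq0$; Theorems \ref{semi-stable-abstract} and \ref{stabile} then yield both equivalences. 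The only nontrivial point here is the displayed Abel identity, which is precisely where tracelessness is used.

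For polystability I would argue two directions. Assume first $\nu=\sum_j\frac{\dim\mathrm{Li}_j}{n+1}\delta_{\mathds{P}(\mathrm{Li}_j)}$ for a splitting $\mathds{R}^{n+1}=\bigoplus_j\mathrm{Li}_j$ with each $\delta_{\mathds{P}(\mathrm{Li}_j)}$ stable for $\mathrm{SL}(\mathrm{Li}_j)$. Replacing $\nu$ by a translate under $G$ (which preserves polystability) I may assume the splitting orthogonal, and since stable implies polystable I may move each piece inside its block to the zero of its $\mathrm{SL}(\mathrm{Li}_j)$-gradient map, i.e. $\int vv^{T}\,d\delta_{\mathds{P}(\mathrm{Li}_j)}=\frac{1}{\dim\mathrm{Li}_j}\mathrm{Id}_{\mathrm{Li}_j}$. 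As $vv^{T}$ is supported in the $j$-th block for $v\in\mathrm{Li}_j$, the blocks telescope to $\int vv^{T}\,d\nu=\frac{1}{n+1}\mathrm{Id}_{n+1}$ on this translate, hence $\fun$ vanishes and $\nu$ is polystable. Conversely, if $\nu$ is polystable I may assume $\fun(\nu)=0$ (the stable case being the trivial splitting $r=1$); by Corollary \ref{cor-polystable} and Proposition \ref{mpc} there is a maximal abelian $\lia\subset\lieq=\lieg_\nu\cap\liep$ with $\nu$ supported on $M^{\lia}=\bigcup_i\mathds{P}(U_i)$, where $\mathds{R}^{n+1}=\bigoplus_i U_i$ is the orthogonal joint eigenspace decomposition of the commuting symmetric matrices in $\lia$. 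Reading $\fun(\nu)=0$, i.e. $\int vv^{T}\,d\nu=\frac{1}{n+1}\mathrm{Id}$, block by block yields $\int_{\mathds{P}(U_i)}vv^{T}\,d\nu=\frac{1}{n+1}\mathrm{Id}_{U_i}$, whose trace gives $\nu(\mathds{P}(U_i))=\dim U_i/(n+1)$; thus every block is charged and the normalized restrictions $\delta_i:=\frac{n+1}{\dim U_i}\,\nu|_{\mathds{P}(U_i)}$ have vanishing $\mathrm{SL}(U_i)$-gradient map, so $\mathrm{Li}_i:=U_i$ gives the asserted decomposition with $\mathrm{SL}(\mathrm{Li}_i)$-polystable pieces.

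The main obstacle is to upgrade each $\delta_i$ from polystable to stable for $\mathrm{SL}(\mathrm{Li}_i)$, and this is exactly where maximality of $\lia$ enters. If some $\delta_i$ were not stable, then by Theorem \ref{stabile} applied inside $\mathrm{SL}(\mathrm{Li}_i)$ there is a nonzero $\eta\in\mathrm{sym}_0(\mathrm{Li}_i)$ with $\exp(\R\eta)$ fixing $\delta_i$; extending $\eta$ by zero to $\tilde\eta\in\mathrm{sym}_0(n+1)=\liep$, one checks that $\tilde\eta$ fixes $\nu$, so $\tilde\eta\in\lieq$, and that $\tilde\eta$ commutes with $\lia$ (each element of $\lia$ is scalar on $U_i$) while $\tilde\eta\notin\lia$ (as $\eta\neq0$ is nonscalar on $U_i$). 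Then $\lia+\R\tilde\eta\subset\lieq$ is a strictly larger abelian subspace, contradicting maximality; hence every $\delta_i$ is stable. Besides this maximality argument, the delicate bookkeeping is the block-diagonal reading of $\fun(\nu)=0$; the semi-stable and stable equivalences, by contrast, reduce to the single Abel identity above.
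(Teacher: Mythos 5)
Your argument is correct and follows essentially the same route as the paper's: the same two-eigenvalue test elements and Abel-summation identity for the stability and semi-stability criteria via formula \eqref{lambdaxi}, and the same joint-eigenspace decomposition of a maximal abelian $\lia\subset\lieg_\nu\cap\liep$ with a block-diagonal reading of $\fun(\nu)=0$ for polystability. The only local difference is that you upgrade each piece $\delta_{\mathds{P}({\rm Li}_j)}$ from polystable to stable by a direct extend-by-zero maximality argument on $\lia$, which in effect re-proves in coordinates what the paper extracts from Corollary \ref{cor-polystable}, namely stability with respect to the semisimple part $\mathrm{SL}(V_1)\times\cdots\times\mathrm{SL}(V_r)$ of the centralizer of $\lia$.
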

\begin{proof}
As before, let $\xi\in \mathfrak a$, $\lambda_1>\dots>\lambda_k$ be its eigenvalues and $V_1,\dots, V_k$ be the corresponding eigenspaces, with $\sum_{j=1}^k\dim(V_j)\lambda_j=0$. From \eqref{lambdaxi} we have $\lambda_\nu(e(-\xi))>0$ iff:
\begin{equation}\label{iffstable}
\lambda_1-(\lambda_1-\lambda_2)\nu(\mathds{P}(V_2\oplus\dots\oplus V_k))-\dots-(\lambda_{k-1}-\lambda_k)\nu(\mathds{P}(V_k))>0.
\end{equation}
Assume that $\nu(\mathds{P}({\rm Li}))<\frac{\dim({\rm Li})}{n+1}$ for any linear subspace ${\rm Li}\subset \mathds{R}^{n+1}$. Then, since $\lambda_j-\lambda_{j+1}>0$:
\begin{equation}
\begin{split}
\lambda_1&-(\lambda_1-\lambda_2)\nu(\mathds{P}(V_2\oplus\dots\oplus V_k))-\dots-(\lambda_{k-1}-\lambda_k)\nu(\mathds{P}(V_k))>\\
&>\lambda_1-(\lambda_1-\lambda_2)\frac{\dim(V_2)+\dots+\dim(V_k)}{n+1}-\dots-(\lambda_{k-1}-\lambda_k)\frac{\dim({V_k})}{n+1}=0,
\end{split}\nonumber
\end{equation}
where the last equality follows by applying  $\sum_{j=1}^k\dim(V_j)\lambda_j=0$ several times.
Viceversa, let ${\rm Li}$ be a linear subspace of $\mathds{R}^{n+1}$ of dimension $0<r<n+1$ such that $\nu(\mathds{P}({\rm Li}))\geq\frac{\dim({\rm Li})}{n+1}$. Then, $\mathds{R}^{n+1}={\rm Li}\oplus {\rm Li}^\bot$, where we denote by ${\rm Li}^\bot$ the orthogonal complement of ${\rm Li}$, and we can choose $\xi$ is such a way that $\xi=(\lambda_1,\lambda_2)$, $r\lambda_1+(n+1-r)\lambda_2=0$, with corresponding eigenspaces ${\rm Li}$ and ${\rm Li}^\bot$. We can assume without loss of generality that $\lambda_1>0$. Conclusion follows since by \eqref{lambdaxi} we have:
$$
\lambda_\nu(e(-\xi))=\lambda_1-(\lambda_1-\lambda_2)\nu(\mathds{P}({\rm Li}^\bot))\leq\lambda_1-\lambda_1\frac{n+1-r}{n+1}-\lambda_1\frac{r}{n+1}=0,
$$
where we use that $r\lambda_1+(n+1-r)\lambda_2=0$.

In order to prove the polystability part, assume that $\nu$ is polystable. Then there exists $g\in {\rm SL}(n+1,\R)$ such that $\fun (g\nu)=0$. Set $\nu'=g\nu$.  By Lemma \ref{lemma-preliminare} and Proposition \ref{mpc} there exists an abelian subalgebra $\lia \subset {\rm sym_0}(n+1)$ such that $\nu'$ is supported on $\mathds{P}^n (\R)^{\lia}$. We can diagonalize simultaneously any element of $\lia$. Hence there exists an orthogonal  splitting:
\[
\R^{n+1}=V_1 \oplus \cdots \oplus V_r,
\]
such that for any $\xi \in \lia$, we have $\xi_{|_{V_i}}=\lambda_j (\xi) \mathrm{Id}_{V_j}$. Therefore
$\mathds{P}^n (\R)^{\lia}=\mathds{P}(V_1)\cup \cdots \cup \mathds{P}(V_r)$ and so $\nu'$ is supported on $\mathds{P}(V_1)\cup \cdots \cup \mathds{P}(V_r)$. This means that
$\nu'=\sum_{j=1}^r \lambda_i \delta_{\mathds{P}(V_j)}$, where $\delta_{\mathds{P}(V_j)} \in \proba (\mathds{P}(V_j))$, $\lambda_j \geq 0$ for $j=1,\ldots,r$ and $\sum_{j=1}^r \lambda_j=1$. Since $\mathrm{SL}(n+1,\R)^{\lia}=\mathrm{SL}(V_1 \oplus \cdots \oplus V_r)$ its semisimple part is given by
$\mathrm{SL} (V_1) \times \cdots \times\mathrm{SL} (V_r )$. By Corollary \ref{cor-polystable} $\nu'$ is $\mathrm{SL} (V_1) \times \cdots \times \mathrm{SL} (V_r )$ stable and so its stabilizer:
\[
(\mathrm{SL} (V_1) \times \cdots \times \mathrm{SL} (V_r ))_{\nu'}=\mathrm{SL} (V_1)_{ \delta_{\mathds{P}(V_1)}}\times \cdots \times \mathrm{SL} (V_r)_{ \delta_{\mathds{P}(V_r)}}
\]
is compact. In particular $\mathrm{SL} (V_j)_{ \delta_{\mathds{P}(V_j)}}$ is compact.
If we decompose $x=x_1+\cdots +x_r$ by means of the above splitting, we have:
\[
\begin{split}
0=\fun (\nu') &= \int_{\mathds{P}^n (\R) } \mup(x)  \mathrm{d}\nu'(x)= \int_{\mathds{P}^n (\R)} \frac{xx^T}{|| x ||^2}  \mathrm{d} \nu'(x) - \frac{1}{n+1}\mathrm{Id}_{n+1}\\
&=\sum_{j=1}^r \lambda_j \int_{\mathds{P}(V_j)} \left(\frac{x_jx_j^{T}}{|| x_j ||^2} - \frac{1}{\dim V_j}\mathrm{Id}_{V_j}\right)  \mathrm{d} \delta_{\mathds{P}(V_j)}(x_j) +\sum_{j=1}^r \lambda_j \frac{1}{\dim V_j} -\frac{1}{n+1} \mathrm{Id}_{n+1} \\
%\lambda_j \int_{\mathds{P}(V_j)} \left(\frac{x_jx_j^{T}}{|| x_j ||^2} - \frac{1}{\dim V_j }
%\mathrm{Id}_{V_j}\right)  \delta_{\mathds{P}(V_j)}(x_j) \\
%&+ \sum_{j=1}^r
%\lambda_j \int_{\mathds{P}(V_j)} \left(\frac{1}{\dim V_j} \mathrm{Id}_{V_j} - \frac{1}{n+1}
%\mathrm{Id}_{n+1}\right)  \delta_{\mathds{P}(V_j)}(x_j)\\
&=\sum_{j=1}^r \lambda_j \fun^j (\delta_{\mathds{P}(V_j)}) + \sum_{j=1}^r \left(\frac{\lambda_j}{\dim V_j} - \frac{1}{n+1}\right)\mathrm{Id}_{V_j}.
%&=\sum_{j=1}^r \lambda_j \fun^j (\delta_{\mathds{P}(V_j)}) + \sum_{j=1}^r \left(\frac{\lambda_j}{\dim V_j}\mathrm{Id}_{V_j} - \frac{1}{n+1} %\mathrm{Id}_{V_j}\right).
\end{split}
\]

In the above formula $\fun^j$ denotes the gradient map with respect to the $\mathrm{SL}(V_j)$ action on
$\proba ( \mathds{P}(V_j))$. Therefore, keeping in mind that $\sum_{j=1}^r \lambda_j \fun^j (\delta_{\mathds{P}(V_j)})$, which lies in $\mathfrak{sym_0}(n+1)$, and $\sum_{j=1}^r \frac{\lambda_j}{\dim V_j}\mathrm{Id}_{V_j} - \frac{1}{n+1} \mathrm{Id}_{V_j}$ are orthogonal in $\mathfrak{gl}(n+1,\R)$, we have $\fun^j (\delta_{\mathds{P}(V_j)})=0$, and so by the above discussion $\delta_{\mathds{P}(V_j)}$ is stable with respect to $\mathrm{SL}(V_j) $, and $\lambda_j=\frac{ \dim V_j }{n+1}$. Set ${\rm Li_j}=g^{-1} V_j$ for any $j=1,\ldots,r$. By the above discussion $\nu=\sum_{j}^r\frac{\dim({\rm Li}_j)}{n+1}\delta_{\mathds{P}({\rm Li}_j)},
$
where $\delta_{\mathds{P}({\rm Li}_j)}$ is a measure of $\mathds{P}({\rm Li}_j)$. We claim $\delta_{\mathds{P}({\rm Li}_j)}$ is a stable measure with respect to $\rm{SL}({\rm Li}_j )$. Indeed,
$\rm{SL}({\rm Li}_j )=g^{-1}\mathrm{SL}(V_j)g$ and it is easy to check that:
\[
\mathrm{Ad}(g^{-1})\circ \mup^{\mathrm{SL}(V_j)}=\mup^{\mathrm{SL}(\rm{Li}_j)} \circ g^{-1}.
\]
Similarly $\mathrm{Ad}(g^{-1}) \circ \fun=\fun'\circ g^{-1}$ and so
$\fun^{-1}(0)=g\cdot \fun'^{-1}(0)$ proving  $\delta_{\mathds{P}({\rm Li}_j)}$ is a stable measure with respect to $\rm{SL}({\rm Li}_j )$.

Vice-versa, assume $\nu=\sum_{j}^r\frac{\dim({\rm Li}_j)}{n+1}\delta_{\mathds{P}({\rm Li}_j)}
$ with respect to a splitting:
\[
\R^{n+1}={\rm Li_1} \oplus \cdots \oplus {\rm Li_r},
\]
where $\delta_{\mathds{P}({\rm Li}_j)}$ is a stable measure of $\mathds{P}({\rm Li}_j)$ with respect to $\rm{SL}({\rm Li}_j )$.
Let $g\in \mathrm{SL}(n+1,\R)$ such that if we denote by $V_j=g{\rm Li_j}$ for $j=1,\ldots,r$, then:
\[
\R^{n+1}=V_1 \oplus \cdots \oplus V_r,
\]
is an orthogonal splitting. By the above computation we get $\fun (g\nu)=\sum_{j=1}^r \frac{ \dim V_j }{n+1} \fun^j (\delta_{\mathds{P}(V_j)})$, where $\delta_{\mathds{P}(V_j)}=g\delta_{\mathds{P}({\rm Li}_j)}$ for $j=1,\ldots,r$. By the above discussion since $\delta_{\mathds{P}({\rm Li}_j)}$ is stable with respect to $\rm{SL}({\rm Li}_j )$, then $g\delta_{\mathds{P}(V_j)}$ is stable with respect to $\mathrm{SL}(V_j)$. Hence there exists $g_j \in \rm{SL}(V_j)$ such that $\fun^j (g_j  \delta_{\mathds{P}(V_j)})=0$. Let $h=g_1\times \cdots \times g_r \in \mathrm{SL} (V_1) \times \cdots \times \mathrm{SL} (V_r )\subset \mathrm{SL}(n+1,\R)$. Then
\[
\fun(hg\nu)=\sum_{j=1}^r \frac{ \dim V_j }{n+1} \fun^j (g_j\delta_{\mathds{P}(V_j)})=0,
\]
concluding the proof.
\end{proof}
\def\cprime{$'$}

\end{document}